\renewcommand{\Re}{\operatorname{Re}}
\numberwithin{equation}{section}
\newtheorem{theorem}{Theorem}[section]
\newtheorem{lem}[theorem]{Lemma}
\newtheorem{thm}{Theorem}
\newtheorem{prop}[theorem]{Proposition}
\newtheorem{hyp}{Hypothesis}
\theoremstyle{definition}
\newtheorem{rem}{Remark}[section]
\newcommand{\Mod}[1]{\ (\mathrm{mod}\ #1)}
\title[Binary correlations]{On binary correlations of Fourier coefficients of holomorphic cusp forms at prime arguments}
\author{Jiseong Kim and Kunjakanan Nath}
\address{Department of Mathematical Sciences, McNeese State University, Lake Charles, LA 70609, USA}
\email{jiseongk51@gmail.com}
\address{Institut \'Elie Cartan de Lorraine, Universit\'e de Lorraine, CNRS, F-54000 Nancy, France}
\email{kunjakanan@gmail.com}
\date{}
\begin{document}

\begin{abstract}
Let $\{\lambda_f(n)\}_{n \geq 1}$ be the normalized Hecke eigenvalues of a given holomorphic cusp form $f$ of even weight $k$. We show under the assumption of the existence of Littlewood's type zero-free region for $L(s, f, \chi)$, where $\chi$ is a Dirichlet character modulo $q$, that if $X^{2/3+\varepsilon} \ll H \ll X^{1-\varepsilon}$ with $\varepsilon>0$, then for any $A\geq 1$,
$$\sum_{1\leq |h|\leq H}\bigg| \sum_{\substack{X<n,\: m \leq 2X \\ n - m = h}} 
\lambda_f(n)\Lambda(n)\lambda_f(m)\Lambda(m) \bigg|^2
\ll_{A} \frac{HX^2}{(\log X)^{A}}$$
holds. Moreover, under an additional hypothesis on the fourth moment of certain Dirichlet polynomials (which follows from GRH for $L(s, f)$), we show that the above result can be strengthened to hold in a wider range 
$X^{1/3+\varepsilon}\ll H \ll X^{1-\varepsilon}$. Finally, if we average over the forms $f$, then for $X^{\varepsilon}\ll H\ll X^{1-\varepsilon}$ and for any $A\geq 1$,
 $$
\sum_{f\in \mathcal{H}_k}\omega_f\sum_{1\leq |h|\leq H}\bigg| \sum_{\substack{X<n,\:  m \leq  2X \\ n - m = h}} 
\lambda_f(n)\Lambda(n)\lambda_f(m)\Lambda(m) \bigg|^2
\ll_{A}\frac{HX^2}{(\log X)^{A}},$$
where $\mathcal{H}_k$ is the Hecke basis for the space of holomorphic cusp forms of weight $k$ for the full modular group $\mathrm{SL}(2, \mathbb{Z})$ and $\omega_f$ are {\it harmonic weights} associated with $f\in \mathcal{H}_k$. These results may be viewed as modular analogues of the averaged forms of the Hardy–Littlewood prime tuple conjecture.

\end{abstract}
\subjclass[2020]{11F30, 11N37, 11P55}

\keywords{Binary correlation, Hecke eigenvalue at prime arguments, circle method}

\maketitle

\section{Introduction}

The study of correlations of arithmetic functions has been a fascinating area of research due to its connections to many famous open problems in number theory, such as the Twin Prime Conjecture, the Chowla Conjecture, to name a few. The strong form of the celebrated Hardy-Littlewood conjecture states that for any $\varepsilon>0$, one has\footnote{Here and throughout, $\Lambda$ denotes the von Mangoldt function, that is, $\Lambda(n)=\log p$ if $n=p^k$ for some prime $p$ and integer $k\geq 1$, and $\Lambda(n)=0$ otherwise.}
$$
\sum_{X < n \leq 2X} \Lambda(n) \Lambda(n + h) = \mathfrak{S}(h) X + O_{\varepsilon}\left(X^{1/2+\varepsilon}\right),
$$ 
where $\mathfrak{S}(h)$ is a singular series given by
\begin{align*}
    \mathfrak{S}(h)=
    \begin{cases}
        2\prod_{p\geq 3}\bigg(1-\dfrac{1}{(p-1)^2}\bigg)\prod_{\substack{p|h\\ p\geq 3}}\bigg(\dfrac{p-1}{p-2}\bigg), & 2|h,\\
        0, & 2\nmid h.
    \end{cases}
\end{align*}
In fact, no result is currently known even when the error term is weakened to $o(X)$. However, the averaged versions over shifts $1 \leq h \leq H$ for $H = X^{\theta+\varepsilon}$ with various admissible values of $\theta$ have been studied by Wolke \cite{W1989}, Mikawa \cite{Mi1991}, Perelli-Pintz \cite{PP1993}, and the best result so far is $\theta = 8/33$ as shown by Matom\"aki, Radziwi\l{}\l{}, and Tao in their recent nice work \cite{MRT2019i}. For more progress on this topic involving $k$-tuples, we invite interested readers to see the recent works of Matom\"aki-Shao-Tao-Ter\"av\"ainen \cite{MXTJ2023} and Matom\"aki-Radziwi\l{}\l{}-Shao-Tao-Ter\"av\"ainen \cite{MMXTJ2024}. 

 In this paper, we are interested in a certain modular analogue of the Hardy-Littlewood type correlation problem. In order to address our problem, we begin with the following set-up. Let $k\geq 1$ be an even integer. Denote by $\mathcal{H}_{k}$ the Hecke basis for the space of holomorphic cusp forms of weight $k$ for the full modular group $\Gamma=\mathrm{SL}(2, \mathbb{Z}).$ It is known that
 \begin{align*}
     |\mathcal{H}_k|=\dfrac{k}{12}+O(k^{2/3}).
 \end{align*}
 Given $f\in \mathcal{H}_k$, its Fourier expansion can be written in the form
 \[f(z)=\sum_{n=1}^\infty\lambda_f(n)n^{(k-1)/2}e(nz) \qquad (\text{Im}(z)>0),\]
 where as usual $e(z)=e^{2\pi i z}$. The Fourier coefficients $\lambda_f(n)$ are the normalized eigenvalues of the Hecke operators and satisfy the Hecke relation
 \begin{align*}
     \lambda_f(m)\lambda_f(n)=\sum_{d|(m, n)}\lambda_f\bigg(\dfrac{mn}{d^2}\bigg)
 \end{align*}
 for all positive integers $m, n$. Moreover, it also satisfies the following bound due to Deligne
 \begin{align*}
     |\lambda_f(n)|\leq d_2(n),
 \end{align*}
 where $d_2$ is the divisor function. These standard facts can be found, for example, in \cite[Chapter 14]{IKbook}.

Given any arithmetic function $a\colon \mathbb{N}\to \mathbb{C}$, an important question in analytic number theory is to study correlations of the form  
\[
\sum_{X<n\leq 2X} \lambda_{f}(n)a(n+h).
\] 
This has been extensively studied in the literature, see, for example, works of Assing-Blomer-Li \cite{ABL2021}, Jiang-L\"u \cite{JL2024}, Pitt \cite{Pit2013}. We also mention the work of Lou \cite{M2019}, who studied the average version of the sum of the type
$$
\sum_{|h| \leq H} \bigg| \sum_{\substack{X<n,\: m \leq 2X\\ n-m=h}} \lambda_f(n) \Lambda(m) \bigg|^2.
$$
Moreover, in recent times, several works have also investigated \emph{murmurations}, concerning the distribution of Hecke eigenvalues at prime arguments within families from various perspectives (see, for example, the works of Bober-Booker-Lee-Lowry-Duda \cite{B2025} and Kundu-Mueller \cite{K2025}). It is worth noting that, when considering families in the weight aspect, some of these results are obtained under the assumption of the Generalized Riemann Hypothesis, as they involve values restricted to prime arguments. Also, note that the range of each aspect (such as weight or level) is a crucial factor in these results.

Our goal in this paper is to investigate the averaged correlations of the sequence $\{\lambda_f(n)\Lambda(n)\}_{n\geq 1}.$ In particular, for various ranges of $H$ in terms of $X$, we will consider the following quantity
\[V_{f}(X; H):= \sum_{1\leq |h|\leq H}\bigg|\sum_{\substack{X<n,\:  m \leq 2X\\ n-m=h}}\lambda_f(n)\Lambda(n)\lambda_f(m)\Lambda(m)\bigg|^2.\]

Our first main result gives an estimate for the average correlation under the assumption of the following {Littlewood type zero-free region}. 

\begin{hyp}[Littlewood type zero-free region]\label{zerofree1}
For any $\varepsilon>0$ and for any Dirichlet character $\chi \pmod{q}$, the twisted $L$-function
\begin{equation}
   L(s, f, \chi) := \sum_{n=1}^{\infty}\frac{\lambda_{f}(n)\chi(n)}{n^{s}} \qquad (\Re(s)>1)
\end{equation}
has no zeros for $s=\sigma+it$ with $|t|$ large and 
\[\sigma\geq 1-\dfrac{(\log \log (q|t|))^{1+\varepsilon}}{\log (q|t|)}.
\]
\end{hyp}

\begin{rem}
It is known (see, for example, \cite[Theorem 5.39]{IKbook}) that there exists $c_f>0$ such that there are no zeros of $L(s, f, \chi)$  for $s=\sigma + it$  with $|t|$ large and 
\[\sigma\geq 1 -\dfrac{c_{f}}{\log (qk^{2}|t|)},\]
except possibly a one simple real zero.

\end{rem}

\begin{thm}\label{unconditionalthm} Assume Hypothesis \ref{zerofree1}. Let $\varepsilon>0$ and let $X$ be sufficiently large. Suppose that $X^{2/3+\varepsilon}\leq H  \leq X^{1-\varepsilon}$. Then, for any $A\geq 1$, we have
    \begin{align*}
       V_{f}(X; H)\ll_{A, \: k} \dfrac{HX^2}{(\log X)^A}.
    \end{align*}
\end{thm}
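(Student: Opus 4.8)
The plan is to run the circle method. Writing $S(\alpha)=\sum_{X<n\le 2X}\lambda_f(n)\Lambda(n)e(n\alpha)$ and using that $\lambda_f(n)\in\mathbb{R}$, one has for each $h$
\[
\sum_{\substack{X<n,\:m\le 2X\\ n-m=h}}\lambda_f(n)\Lambda(n)\lambda_f(m)\Lambda(m)=\int_0^1|S(\alpha)|^2e(h\alpha)\,d\alpha=:c_h,
\]
so that $V_f(X;H)=\sum_{1\le|h|\le H}|c_h|^2$. First I would dissect $[0,1)$ into major arcs $\mathfrak M$, consisting of those $\alpha$ whose best rational approximation $a/q$ (Dirichlet's theorem with parameter $Q_0=X(\log X)^{-B_0}$) has $q\le P:=(\log X)^B$, and minor arcs $\mathfrak m=[0,1)\setminus\mathfrak M$, where $B,B_0$ are chosen large in terms of $A$. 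Splitting $c_h=c_h^{\mathfrak M}+c_h^{\mathfrak m}$ accordingly, it suffices to bound $\sum_{|h|\le H}|c_h^{\mathfrak M}|^2$ and $\sum_{|h|\le H}|c_h^{\mathfrak m}|^2$ separately by $\ll_{A,k}HX^2(\log X)^{-A}$.

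On the major arcs the decisive point — and this is where Hypothesis \ref{zerofree1} enters, and where the problem genuinely differs from the classical Hardy--Littlewood one — is that $S$ is already \emph{small} there, so that no main term appears. Indeed, for $q\le P$ one expands $e(an/q)$ into Dirichlet characters modulo $q$; by Perron's formula and a contour shift using the zero-free region of Hypothesis \ref{zerofree1} (and that $L(s,f,\chi)$ is entire, $f$ being cuspidal), the twisted sums satisfy $\sum_{n\le Y}\lambda_f(n)\Lambda(n)\chi(n)\ll_C Y(\log Y)^{-C}$ for every $C$, uniformly in $\chi\bmod q$ with $q\le P$. Since the major arcs are narrow ($|\alpha-a/q|\le (\log X)^{B_0}/(qX)$) and of total measure $\ll (\log X)^{B+B_0}/X$, partial summation in $\alpha$ then yields $\sup_{\mathfrak M}|S|\ll_C X(\log X)^{-C}$, hence $|c_h^{\mathfrak M}|\le\int_{\mathfrak M}|S|^2\ll_C X(\log X)^{-C}$ for every $C$; summing over $|h|\le H$ gives the required bound with much room to spare.

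The minor arcs are the heart of the matter. The strategy is to first obtain a pointwise estimate $S(\alpha)\ll X^{5/6+o(1)}$ uniformly on $\mathfrak m$. One applies Vaughan's identity (or Heath--Brown's identity) to reduce $S(\alpha)$ to Type I and Type II bilinear sums, handling $\lambda_f$ via the Hecke relation $\lambda_f(dm)=\sum_{e\mid(d,m)}\mu(e)\lambda_f(d/e)\lambda_f(m/e)$: the Type I sums are controlled by Wilton's uniform bound $\sum_{n\le x}\lambda_f(n)e(n\beta)\ll x^{1/2}\log x$ (equivalently, Voronoi summation), and the Type II sums by Cauchy--Schwarz followed by the standard estimate for $\sum_{\ell}\min(M,\|\ell\beta\|^{-1})$ together with bounds for shifted divisor/convolution sums of $\lambda_f$. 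Granting this, the minor-arc contribution to the variance is bounded crudely, via Bessel's inequality and Rankin--Selberg, by
\[
\sum_{1\le|h|\le H}|c_h^{\mathfrak m}|^2\le\int_{\mathfrak m}|S(\alpha)|^4\,d\alpha\le\Big(\sup_{\mathfrak m}|S|\Big)^2\int_0^1|S(\alpha)|^2\,d\alpha\ll X^{5/3+o(1)}\!\cdot X\log X=X^{8/3+o(1)},
\]
using $\int_0^1|S|^2=\sum_{X<n\le 2X}\lambda_f(n)^2\Lambda(n)^2\ll X\log X$. As $H\ge X^{2/3+\varepsilon}$, the right-hand side is $\ll HX^2(\log X)^{-A}$; it is exactly this crude fourth-moment step that forces the exponent $2/3$ (the sharper range in the second theorem coming from replacing it with the hypothesised fourth-moment bound).

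The main obstacle I anticipate is making the minor-arc estimate genuinely uniform. As in the classical case, the Weyl/Vaughan treatment of the Type II sums produces a term of size $\asymp\sqrt{Xq}$, which is admissible only for $q$ up to about $X^{2/3}$ but is vacuous for $\alpha$ near a rational with denominator close to $X$. For those arcs the pointwise bound must be abandoned: one localises $|S(\alpha)|^2$ on each (extremely short) arc around $a/q$ to the value $\big|\sum_{X<n\le 2X}\lambda_f(n)\Lambda(n)e(an/q)\big|^2$, which is legitimate since the corresponding $|\eta|\le 1/(qQ_0)$ is tiny, opens the square, and then exploits the truncation $|h|\le H$ together with large-sieve / Ramanujan-sum cancellation in the resulting sums over $a$ and $q$ — the arcs with $q$ near $X$ being thin and numerous, but the bilinear structure of $\sum_{|h|\le H}|c_h^{\mathfrak m}|^2$ over this range recovering enough. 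Carrying out this last step cleanly, and arranging the parameters $B,B_0$ and the various dyadic ranges of $q$ so that the entire minor-arc contribution stays below $HX^2(\log X)^{-A}$ for every $H$ in the stated window, is where the real technical work lies.
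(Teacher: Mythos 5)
Your overall skeleton (circle method, character sums plus the zero-free region on the arcs with $q\ll(\log X)^{B}$, bilinear estimates elsewhere, and the computation $X^{5/3}\cdot X=X^{8/3}\ll HX^{2}(\log X)^{-A}$ that produces the threshold $H\geq X^{2/3+\varepsilon}$) matches the paper's. But there is a genuine gap in the intermediate range of moduli $(\log X)^{B}\ll q\ll X^{1/3}$. Your claimed uniform minor-arc bound $S(\alpha)\ll X^{5/6+o(1)}$ is false there: the Type II analysis produces a diagonal term of size $Xq^{-1/2}(\log X)^{O(1)}$, which for $q\asymp(\log X)^{B}$ is only $X(\log X)^{-B/2+O(1)}$. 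Feeding this into your Bessel bound $\sum_{h}|c_h^{\mathfrak m}|^{2}\leq\sup_{\mathfrak m}|S|^{2}\int_{0}^{1}|S|^{2}$ gives $X^{3}(\log X)^{-B+O(1)}$, which is $\ll HX^{2}(\log X)^{-A}$ only when $H$ is within a power of $\log X$ of $X$ — not for $H\leq X^{1-\varepsilon}$. Nor can you absorb these $q$ into the major arcs: converting $e(an/q)$ to Dirichlet characters via Gauss sums costs a factor $q^{1/2}$, so the $L$-function saving of $(\log X)^{-C}$ is wiped out once $q$ is a power of $X$. Your anticipated obstacle (denominators $q$ close to $X$) is, by contrast, dispatched trivially by taking the Dirichlet/Farey parameter to be $\asymp H(\log X)^{-2A}$ rather than $\asymp X$, so that $\sqrt{Xq}\leq(XH)^{1/2}(\log X)^{-A}$ throughout; the elaborate large-sieve rescue you sketch is not needed.

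The missing idea is to exploit the truncation $|h|\leq H$ \emph{before} invoking pointwise bounds. The paper smooths the $h$-sum with a Schwartz weight whose Fourier transform is supported in $[-1/2,1/2]$, giving
\begin{equation*}
\sum_{1\leq|h|\leq H}|c_h^{\mathcal S}|^{2}\ll HX(\log X)^{2}\sup_{\alpha\in\mathcal S}\int_{\mathcal S\cap[\alpha-1/(2H),\,\alpha+1/(2H)]}|S_{f,\Lambda}(\beta)|^{2}\,d\beta,
\end{equation*}
and then Gallagher's lemma converts the integral over the $1/H$-window around $a/q$ into $H^{-2}\int_{X}^{2X}\bigl|\sum_{x<n\leq x+H}\lambda_f(n)\Lambda(n)e(na/q)\bigr|^{2}dx$. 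For the troublesome range the relevant diagonal term is now $Hq^{-1/2}$, not $Xq^{-1/2}$, and since $q\gg(\log X)^{2A+O(1)}$ this saves the required power of $\log X$; only for $q\ll(\log X)^{2A+O(1)}$ does one fall back on characters, the explicit formula, zero-density estimates and Hypothesis 1 (your Perron-plus-partial-summation treatment of that last range is a workable alternative). Without this localization mechanism your argument cannot cover $(\log X)^{B}\ll q\ll X^{1/3}$, and the proof does not close.
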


\begin{rem}
It is known that modular forms of the \emph{CM type} satisfy the Vinogradov-Korobov type zero-free region (see, for example, work of Coleman \cite{Col1990}). So, in this case, one does not need to assume Hypothesis \ref{zerofree1}.
\end{rem}

The work of Perelli \cite{P1984} would imply a version of Theorem \ref{unconditionalthm} in the range $X/(\log X)^{O(1)}\leq H\leq X$ without the assumption of Hypothesis \ref{zerofree1}. However, the key difficulty in our work is to take $H$ as small as possible, which requires the assumption on zero-free region.

Note that we have a restriction of $H
\geq X^{2/3+\varepsilon}$ in Theorem \ref{unconditionalthm}. It is reasonable to expect that the above theorem should hold for $H\geq X^\varepsilon$. To make further progress toward improving the range of $H$, we assume the following hypothesis on the fourth‐moment estimates.
\begin{hyp}[Fourth moment estimates]\label{Hypothesis}
For any \(T, M \geq 1\) with \(\min(T,M)\gg1\) and for any integer $q\geq 1$, suppose that
\begin{align}
\sideset{}{^*}\sum_{\chi \bmod q}
\int_{T/2}^{T}
\Big|
  \sum_{M\le n\le2M}\dfrac{\lambda_f(n)\chi(n)}{n^{1/2+it}}
\Big|^4
\,dt
&\ll_k
qT\bigl(\log(qTM)\bigr)^{O(1)},
\label{eq:4th2}
\end{align}
where $^*$ denotes the sum over primitive Dirichlet characters.
\end{hyp}

\begin{rem}
    One can use integration by parts and Hypothesis \ref{Hypothesis} to deduce (see, for example, \cite[Lemma 2.2]{MRT2019i}) that for any \(T, M \geq 1\) with \(\min(T,M)\gg1\) and for any integer $q\geq 1$,
\begin{align*}
    \sideset{}{^*}\sum_{\chi \bmod q}
\int_{T/2}^{T}
\Big|
  \sum_{M\le n\le2M}\dfrac{\lambda_f(n)(\log n)\chi(n)}{n^{1/2+it}}
\Big|^4
\,dt
&\ll_k
qT\bigl(\log(qTM)\bigr)^{O(1)}.
\end{align*}
\end{rem}

\begin{thm}\label{conditionalthm} Assume Hypotheses \ref{zerofree1} and \ref{Hypothesis}.
 Let $\varepsilon>0$ and let $X$ be sufficiently large. Suppose that $X^{1/3+\varepsilon}\leq H  \leq X^{1-\varepsilon}.$
Then, for any $A\geq 1$, we have
    \begin{align*}
       V_{f}(X; H)\ll_{A, \: k} \dfrac{HX^2}{(\log X)^A}.
    \end{align*}
\end{thm}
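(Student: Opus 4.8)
The plan is to follow the same circle-method strategy as in the proof of Theorem \ref{unconditionalthm}, so the bulk of the argument is identical; the only place where the restriction $H \geq X^{2/3+\varepsilon}$ was used must be isolated and re-run with the fourth-moment input. Recall how the proof of Theorem \ref{unconditionalthm} should go: one opens the square in $V_f(X;H)$, detects the condition $n-m=h$ by an exponential sum (or by Fourier analysis on $\mathbb{Z}$), and is led to bound an integral of $|S_f(\alpha)|^4$ weighted by a Fej\'er-type kernel coming from the sum over $|h|\leq H$, where $S_f(\alpha) = \sum_{X<n\leq 2X}\lambda_f(n)\Lambda(n)e(n\alpha)$. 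A Hardy--Littlewood / Farey dissection splits $[0,1]$ into major arcs $\mathfrak{M}$ near rationals $a/q$ with $q \leq Q$ and minor arcs $\mathfrak{m}$. On the major arcs one uses Hypothesis \ref{zerofree1} to show $S_f(\alpha)$ is genuinely small (there is no main term, since $\lambda_f(n)$ oscillates and has mean zero over arithmetic progressions by the zero-free region), gaining an arbitrary power of $\log X$; this part is unchanged. The minor-arc contribution is where $H$ enters: after Parseval/large-sieve manipulations one needs a bound of the shape
\[
\int_{\mathfrak{m}} |S_f(\alpha)|^4\, \min\!\Big(H, \frac{1}{\|\alpha\|}\Big)\, d\alpha \ll \frac{HX^2}{(\log X)^A},
\]
and to control this one Fourier-expands $S_f$ in Dirichlet characters and reduces to a fourth-moment average of the Dirichlet polynomials $\sum_{n\asymp X}\lambda_f(n)\Lambda(n)\chi(n)n^{-1/2-it}$ over primitive $\chi \bmod q$ and $|t| \leq T$, where $qT$ is controlled by the denominators in the Farey dissection and by $X/H$. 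In Theorem \ref{unconditionalthm} this fourth moment is estimated trivially (Deligne bound $|\lambda_f(n)|\leq d_2(n)$ plus the classical $L^2$ large sieve, i.e.\ a pointwise bound on one pair of factors), which forces $H \geq X^{2/3+\varepsilon}$; here we instead invoke Hypothesis \ref{Hypothesis} (in its $\Lambda$-weighted form, as recorded in the Remark after that hypothesis) to get the near-optimal bound $\ll qT(\log qTX)^{O(1)}$, and this stronger input is precisely what lets the Farey dissection go out to $Q = X/H^{1-\varepsilon}$, hence down to $H \geq X^{1/3+\varepsilon}$.

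Concretely, I would proceed as follows. First, express $V_f(X;H) = \int_0^1 |S_f(\alpha)|^4 K_H(\alpha)\, d\alpha + (\text{diagonal } h=0 \text{ term, already removed})$, where $K_H(\alpha) = \sum_{1\leq|h|\leq H} e(h\alpha)$ satisfies $|K_H(\alpha)| \ll \min(H, \|\alpha\|^{-1})$ and $\int_0^1 K_H \ll \log H$; strictly speaking one smooths the cutoffs $X<n,m\leq 2X$ first. Second, perform a Farey dissection at level $Q$ with $H^{1+\varepsilon}/X \leq Q^{-2}$ roughly, i.e.\ $Q \approx X/H^{1/2}$ up to $\varepsilon$'s — the exact exponent bookkeeping is what turns the quality of the fourth-moment bound into the exponent $1/3$ — and on the major arcs write $\alpha = a/q + \beta$, detect residues mod $q$ via primitive characters, and apply Hypothesis \ref{zerofree1} together with a contour shift (Perron plus the zero-free region, as in the standard proof that $\psi(x;q,a)$ has no main term here) to get $S_f(a/q+\beta) \ll X(\log X)^{-2A}(1+|\beta|X)^{-1}$ or similar, so that the major-arc contribution to the integral is $\ll HX^2(\log X)^{-A}$. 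Third, on the minor arcs use the duality/large-sieve reduction: bound $\int_{\mathfrak{m}}|S_f|^4 \min(H,\|\alpha\|^{-1})$ by $\sum_{q\leq Q}\, \sum_{Q < \text{something}} (\cdots)$ dyadically in $q$ and in the ``height'' $T \asymp 1 + |\beta| X$, and in each dyadic block apply the $\Lambda$-weighted fourth moment bound of Hypothesis \ref{Hypothesis} with $M \asymp X$; summing the resulting $\sum_q q T (\log)^{O(1)}$ against the kernel weight and the $1/\varphi(q)$ factors gives $\ll (\text{total mass of } K_H) \cdot X^2 (\log X)^{O(1)} \ll HX^2(\log X)^{O(1)}$, which falls short by a power of $\log X$ — this is repaired by combining the minor-arc analysis with the major-arc savings in the standard way (one does \emph{not} prove the minor arcs alone are $(\log X)^{-A}$-small; instead the $\log$-power saving comes globally from the major-arc cancellation, exactly as in \cite{MRT2019i}).

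The main obstacle, as in \cite{MRT2019i} and its predecessors, is the \emph{bookkeeping at the junction} of the two regimes: making the Farey level $Q$, the fourth-moment parameters $(q,T,M)$, and the range $X^{1/3+\varepsilon}\leq H\leq X^{1-\varepsilon}$ all fit together so that (i) $qT$ stays within the range where Hypothesis \ref{Hypothesis} applies nontrivially, (ii) the major arcs are genuinely ``major'' (i.e.\ $Q$ not so large that the zero-free-region estimate degrades), and (iii) the pieces not covered by either the pointwise zero-free-region bound or the fourth moment — notably a possible Siegel/exceptional real zero of $L(s,f,\chi)$, and the ``generic minor arc'' where neither $q$ nor $T$ is large — are handled. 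The exceptional-zero contribution is dealt with as usual by either a Siegel-type ineffective bound or (since the final constants are allowed to depend on $k$ but the statement is for a fixed form) by absorbing it; the generic minor arcs are where one genuinely needs the fourth moment rather than a second moment, and verifying that $1/3$ (rather than a worse exponent) drops out requires tracking the exponent of $X$ produced by balancing the large-sieve loss $q$ against the kernel mass $\min(H,1/\|\alpha\|)$ at the transition $\|\alpha\| \asymp 1/H$. Everything else — the smoothing, the removal of prime powers from $\Lambda$, the passage from $\lambda_f(n)\Lambda(n)$ to a Dirichlet polynomial, and the contour shifts on the major arcs — is routine and identical to the proof of Theorem \ref{unconditionalthm}.
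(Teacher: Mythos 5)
Your high-level framing (circle method, same major-arc treatment via Hypothesis~\ref{zerofree1}, fourth-moment input for the minor arcs) is correct, but several concrete claims are wrong and the minor-arc strategy as stated does not close.

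First, the dissection level is off. The paper takes the major arcs $\mathfrak{M}'$ in \eqref{Def: major arcs thm2} to consist of $q \le Q = (\log X)^{2A+O(1)}$ with arc width $(\log X)^{3A}/H$ — a \emph{polylogarithmic} $Q$ — not $Q \approx X/H^{1/2}$ or $X/H^{1-\varepsilon}$ as you propose. All rationals with $q$ between $(\log X)^{2A+O(1)}$ and the Dirichlet-approximation level lie in the minor arcs, and the large-$q$ saving there is genuine cancellation (a negative power of $\log X$), not a bookkeeping artifact.

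Second, and more seriously, the ``repair'' mechanism you invoke — ``the $\log$-power saving comes globally from the major-arc cancellation; one does not prove the minor arcs alone are $(\log X)^{-A}$-small'' — is not how the argument works and would not work. In the paper, Propositions~\ref{Prop: Major thm2} and~\ref{Prop: Minor thm2} are \emph{independent} estimates: each piece is separately $\ll X/(\log X)^{A+2}$, and they are simply added. There is no cross-cancellation between major and minor arcs. If, as your sketch concludes, your minor-arc bound really were $HX^2(\log X)^{O(1)}$ with a positive $\log$-power, the proof would be dead; no amount of major-arc decay could subtract it off, since one takes absolute values when splitting the integral.

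Third, you have misidentified where Hypothesis~\ref{Hypothesis} is applied. You propose applying it (in $\Lambda$-weighted form) to a polynomial of length $\asymp X$, namely $\sum_{n\asymp X}\lambda_f(n)\Lambda(n)\chi(n)n^{-1/2-it}$. The paper never does this. Instead it first transfers from exponential sums to Dirichlet polynomials via \cite[Corollary 5.3]{MRT2019i} (this reduces $\int|S_{f,\Lambda}(\beta)|^2$ over a short interval of $\beta$'s to a hybrid mean value over characters $\chi \bmod q_1$ and $t$ in a short range), and then applies the Heath--Brown decomposition of $\Lambda_f$ (Lemma~\ref{Lemma: Heath Brown}) into Type $d_1,d_2$, Type II, and small sums. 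Hypothesis~\ref{Hypothesis} is used only for the Type $d_1,d_2$ pieces, and specifically on the two \emph{long} factors $\mathfrak{g}_1, \mathfrak{g}_2$ of type $1_{(M_j, 2M_j]}\cdot\lambda_f\cdot\chi$ with $M_j \gg HX^{-\varepsilon^2}$ (hence length $\gg H^{1-o(1)}$, not $X$). The Type II sums and the small sums are handled by the $L^2$ mean-value theorem (Lemma~\ref{Lemma: mean value}) alone, using the factorization range $X^{\varepsilon^2} \ll N \ll HX^{-\varepsilon^2}$; this is precisely where the threshold $H \ge X^{1/3+\varepsilon}$ enters (it ensures the Heath--Brown decomposition produces factorizations with one piece of controllable length). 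Without the Heath--Brown decomposition you cannot recover the $1/3$.

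Finally, a smaller point: your description of how Theorem~\ref{unconditionalthm} treats its minor arcs (``this fourth moment is estimated trivially via Deligne plus the $L^2$ large sieve'') is also inaccurate; there the minor arcs are treated by a pointwise $L^\infty$ bound on $S_{f,\Lambda}$ (Lemma~\ref{Lemma: Exponential sum L infinity}, which is Vaughan's identity plus a Vinogradov-type bilinear estimate), and no fourth moment appears at all. The passage from $2/3$ to $1/3$ is a change of method on the minor arcs (from pointwise bounds to the Heath--Brown $+$ Dirichlet-polynomial mean-value machinery of \cite{MRT2019i}), not a refinement of the same estimate.
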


The key point of Theorem \ref{conditionalthm} is the quantitative improvement over Theorem \ref{unconditionalthm}, allowing us to take $H\geq X^{1/3+\varepsilon}$ instead of $H\geq X^{2/3+\varepsilon}$. However, Theorem \ref{conditionalthm} has the disadvantage that it has a stronger requirement on the fourth moment estimate. But the assumption in Hypothesis \ref{Hypothesis} is quite natural and can be deduced from the bound
$$\sideset{}{^*}\sum_{\chi \bmod q}\int_{T/2}^{T} \left| L(1/2+it, f, \chi)\right|^{4}dt \ll_{k} qT (\log qT)^{O(1)},$$
which is expected to be true. This can be viewed as an analogue of the eighth moment of the Dirichlet $L$-function, which is well studied when one averages over the moduli $q$ (see, for example, \cite[Proposition 3.2]{CLMR2024}). It is also worth mentioning in this context the work of Bettin and Conrey \cite{BC2021}, who studied averages of the Dirichlet polynomials in the context of the Riemann zeta function and demonstrated many nice applications together with valuable insights into the moment conjectures in random matrix theory. 

\begin{rem}
    It seems that under Generalized Riemann Hypothesis (GRH), we can extend the range of $H$ to $X^{\varepsilon} \le H \le X^{1-\varepsilon}$.
This is because, under GRH, Lemma~\ref{Lemma: major arc for small q} holds for $H \ge X^{\varepsilon}$, and, for any integer $k\geq 1$, it is known that  (see, for example, \cite{MC2014})
\[
\sideset{}{^*}\sum_{\chi \bmod q}\int_{T/2}^{T} \bigl|L\!\left(\tfrac12+it, f, \chi\right)\bigr|^{k^{2}}\,dt
\;\ll_{k}\; qT\,\bigl(\log(qT)\bigr)^{O_{k}(1)}.
\] 
\end{rem}

Finally, we present our result on average over the family of holomorphic cusp forms, where we can take $H\geq X^\varepsilon$.

\begin{thm}\label{averagetheorem} Let $\varepsilon>0$ and let $X$ be sufficiently large. Suppose that \( X^{{\varepsilon}} \leq H \leq X^{1-\varepsilon} \) and \( k \gg X^{1+5\varepsilon} \) is an even positive integer. Then, for any $A\geq 1$, we have
\begin{align*}
    \sum_{f\in \mathcal{H}_k}\omega_fV_f(X; H)\ll_A \dfrac{HX^2}{(\log X)^A},
\end{align*}
where $\omega_f$ are {harmonic weights} associated with $f\in \mathcal{H}_k$ given by
\begin{align*}
    \omega_f:=\dfrac{\Gamma(k-1)}{(4\pi)^{k-1}\langle f, f\rangle}.
\end{align*}
Here $\langle f, f \rangle$ denotes the Petersson inner product and $\Gamma$ is the Gamma function.
\end{thm}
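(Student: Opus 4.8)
The plan is to open the square defining $V_f(X;H)$, push the average over $f$ inside, and evaluate the resulting short average of a product of four Hecke eigenvalues by the Petersson trace formula; this uses no information about the twisted $L$-functions $L(s,f,\chi)$, which is why Theorem \ref{averagetheorem} is unconditional. Expanding $|\cdot|^2$ and using that the $\lambda_f(n)$ are real, one has
\[
\sum_{f\in\mathcal{H}_k}\omega_f V_f(X;H)=\sum_{1\le|h|\le H}\ \sum_{\substack{X<n_1,m_1\le 2X\\ n_1-m_1=h}}\ \sum_{\substack{X<n_2,m_2\le 2X\\ n_2-m_2=h}}\Lambda(n_1)\Lambda(m_1)\Lambda(n_2)\Lambda(m_2)\,\mathcal T,
\]
where $\mathcal T:=\sum_{f\in\mathcal{H}_k}\omega_f\,\lambda_f(n_1)\lambda_f(m_1)\lambda_f(n_2)\lambda_f(m_2)$. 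Applying the Hecke relation twice, $\lambda_f(n_i)\lambda_f(m_i)=\sum_{d_i\mid (n_i,m_i)}\lambda_f\!\left(n_im_i/d_i^2\right)$, and then the Petersson formula in the normalisation attached to the harmonic weights $\omega_f$,
\[
\sum_{f\in\mathcal{H}_k}\omega_f\,\lambda_f(a)\lambda_f(b)=\mathbf 1_{a=b}+2\pi i^{-k}\sum_{c\ge1}\frac{S(a,b;c)}{c}\,J_{k-1}\!\left(\frac{4\pi\sqrt{ab}}{c}\right),
\]
one splits $\mathcal T$, and hence the whole quantity, into a diagonal term (from $\mathbf 1_{a=b}$ with $a=n_1m_1/d_1^2$ and $b=n_2m_2/d_2^2$) and an off-diagonal Kloosterman--Bessel term.

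For the diagonal I would first discard the terms in which some $n_i$ or $m_i$ is a proper prime power: there are $O(\sqrt X/\log X)$ of these up to $2X$, and fixing such a variable together with $h$ and the $O(X^{o(1)})$ possible divisors pins the rest down to $O(X^{o(1)})$ choices (each condition $n_jm_j/d_j^2=\text{const}$ with $n_j-m_j=h$ has $O(X^{o(1)})$ solutions), so these contribute $\ll HX^{1/2+o(1)}$. In the surviving terms $n_1,m_1,n_2,m_2$ are all prime, so $(n_1,m_1)=(n_2,m_2)=1$ because $h\ne0$, forcing $d_1=d_2=1$; then $n_1m_1=n_2m_2$ with four primes yields $\{n_1,m_1\}=\{n_2,m_2\}$, while $n_1-m_1=h=n_2-m_2\ne0$ excludes $(n_1,m_1)=(m_2,n_2)$, so $n_1=n_2$ and $m_1=m_2$. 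Thus the diagonal is at most
\[
\sum_{1\le|h|\le H}\ \sum_{\substack{X<n,m\le 2X\\ n-m=h}}\Lambda(n)^2\Lambda(m)^2\ \le\ (\log 2X)^2\Big(\sum_{X<n\le 2X}\Lambda(n)\Big)^2\ \ll\ X^2(\log X)^2,
\]
which is $\ll_A HX^2/(\log X)^A$ since $H\ge X^{\varepsilon}$.

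For the off-diagonal I would use Weil's bound $|S(a,b;c)|\ll_\varepsilon c^{1/2+\varepsilon}(a,b,c)^{1/2}$ together with standard estimates for Bessel functions ($J_{k-1}(y)\ll\min\!\big(k^{-1/3},|y^2-k^2|^{-1/4}\big)$ for $y\ge k$, and rapid decay for $y\le k/2$). Since $n_i,m_i\le 2X$ we have $a,b\le 4X^2$, so the argument $y=4\pi\sqrt{ab}/c\le 16\pi X^2/c$; because $k\gg X^{1+5\varepsilon}$, the cut-off $c_0:=4\pi\sqrt{ab}/k$ satisfies $c_0\ll X^{1-5\varepsilon}$, and I split the $c$-sum at $c_0$. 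For $c\le c_0$ we have $y\ge k$, and inserting the Bessel bound and summing with the divisor estimate $\sum_{c\le c_0}c^{o(1)}(a,b,c)^{1/2}\ll c_0X^{o(1)}$ gives
\[
\sum_{c\le c_0}\frac{|S(a,b;c)|}{c}\,\big|J_{k-1}(y)\big|\ \ll\ (ab)^{-1/4}\,c_0\,X^{o(1)}\ \ll\ \frac{(ab)^{1/4}}{k}X^{o(1)}\ \ll\ \frac{X^{1+o(1)}}{k}\ \ll\ X^{-3\varepsilon};
\]
for $c>c_0$ one has $y<k$, and by the rapid decay of $J_{k-1}$ below its turning point these terms contribute an amount smaller than any power of $X$. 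Hence the inner $c$-sum is $\ll X^{-3\varepsilon}$ for each $a,b$, and summing the trivial bound $\Lambda(n_1)\Lambda(m_1)\Lambda(n_2)\Lambda(m_2)\ll(\log X)^4$ over the $O(HX^2)$ tuples $(h,n_1,m_1,n_2,m_2)$ and the $O(X^{o(1)})$ divisors $d_1,d_2$ shows the off-diagonal is $\ll HX^2(\log X)^4X^{-3\varepsilon+o(1)}\ll_A HX^2/(\log X)^A$. Combining this with the diagonal estimate proves the theorem.

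The \textbf{main obstacle} is the off-diagonal Bessel analysis: one must control $\sum_{c\ge1}S(a,b;c)c^{-1}J_{k-1}(4\pi\sqrt{ab}/c)$ uniformly for all $a,b\le 4X^2$, and, in particular, handle the turning-point range $4\pi\sqrt{ab}/c\asymp k$, where using only $J_{k-1}(y)\ll k^{-1/3}$ over a whole dyadic block of $c$ is too wasteful and one has to exploit the sharper decay $J_{k-1}(y)\ll|y^2-k^2|^{-1/4}$ as $y$ moves off $k$. The reason the argument works is purely quantitative: $k\gg X^{1+5\varepsilon}$ forces the cut-off $c_0\asymp\sqrt{ab}/k$ down to $X^{1-5\varepsilon}$, short enough that even the essentially Weil-trivial bound on $\sum_{c\le c_0}$ gains a power of $X$ over the $O(HX^2)$ tuples, with the $5\varepsilon$ leaving comfortable room for the divisor and logarithmic losses and for a genuine power saving. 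No cancellation among the shifts $h$ or among the tuples is needed, because $\sum_{f\in\mathcal{H}_k}\omega_f V_f(X;H)$ is a sum of nonnegative terms.
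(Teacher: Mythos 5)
Your proposal is a genuinely different route from the paper's, and as written it has a real gap in the off-diagonal analysis.

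The paper does \emph{not} open $V_f(X;H)$ directly. Instead it keeps the circle-method/Gallagher framework of Theorems \ref{unconditionalthm}--\ref{conditionalthm}: after the smoothing and Parseval steps of \eqref{Eq: Theorem unconditional split}--\eqref{Eq: Theorem unconditional final}, the problem is reduced (via Gallagher's lemma) to bounding
\[
\frac{1}{H^{2}}\int_{X}^{2X}\sum_{f\in\mathcal H_{k}}\omega_{f}\Big|\sum_{x<n\le x+H}\lambda_{f}(n)\Lambda(n)e(n\alpha)\Big|^{2}dx,
\]
and then the Petersson formula (Lemma \ref{Lemma: Petersson}, the IK Corollary 14.24 bound) is applied with \emph{both} arguments $m,n$ lying in $(x,x+H]$, hence $\ll X$. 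The off-diagonal error is then $\ll (mn)^{1/4+o(1)}k^{-1/2}\ll X^{1/2+o(1)}k^{-1/2}\ll X^{-5\varepsilon/2+o(1)}$ once $k\gg X^{1+5\varepsilon}$, and no Kloosterman--Bessel estimates beyond the packaged error bound are needed. That is the whole point of routing Theorem \ref{averagetheorem} through Gallagher: it keeps the Petersson arguments at size $X$ rather than $X^{2}$.

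By contrast, your plan applies the Hecke relation first and lands on Petersson with arguments $a=n_{1}m_{1}/d_{1}^{2}$, $b=n_{2}m_{2}/d_{2}^{2}$ of size up to $4X^{2}$. The paper's Remark following Theorem \ref{averagetheorem} already flags that this direct use of Lemma \ref{Lemma: Petersson} only works for $k\gg X^{2+\varepsilon}$, because the error becomes $\ll (ab)^{1/4+o(1)}k^{-1/2}\ll X^{1/2-5\varepsilon/2+o(1)}$ with $k\gg X^{1+5\varepsilon}$, which is not a power saving. To beat this you must, as you attempt, redo the Kloosterman--Bessel analysis from scratch and obtain an error closer to $(ab)^{1/4}/k$. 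Your estimate
\[
\sum_{c\le c_{0}}\frac{|S(a,b;c)|}{c}\,|J_{k-1}(y)|\ll (ab)^{-1/4}c_{0}X^{o(1)}
\]
is only valid if one inserts $J_{k-1}(y)\ll y^{-1/2}$ for \emph{all} $c\le c_{0}$; but $y^{-1/2}$ fails in the transition range $k\le y\le 2k$ (i.e.\ $c_{0}/2\le c\le c_{0}$), where the best one can say is $J_{k-1}(y)\ll k^{-1/3}$, or more precisely $J_{k-1}(y)\ll k^{-1/4}(|y-k|+k^{1/3})^{-1/4}$. Plugging in only $k^{-1/3}$ on the whole dyadic block $c\asymp c_{0}$ gives $\ll c_{0}^{1/2+o(1)}k^{-1/3}\ll X^{1/6-25\varepsilon/6+o(1)}$, which is \emph{not} small when $\varepsilon$ is small. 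Recovering a power saving requires using the finer Airy-type decay $k^{-1/4}(|y-k|+k^{1/3})^{-1/4}$ and summing carefully in $j=|c-c_{0}|$, which your write-up does not do, and this is exactly the ``main obstacle'' you flag without resolving. In the same vein, your assertion that the range $c>c_{0}$ ``contributes an amount smaller than any power of $X$ by rapid decay'' is not correct verbatim: rapid decay only kicks in once $y\le k-k^{1/3}$, and the sub-range $c_{0}<c\le c_{0}(1+O(k^{-2/3}))$ must be handled by the $k^{-1/3}$ bound. Finally, the ``divisor estimate'' $\sum_{c\le c_{0}}c^{o(1)}(a,b,c)^{1/2}\ll c_{0}X^{o(1)}$ suppresses the fact that for off-diagonal tuples one can have $(a,b)\asymp X$ (e.g.\ $n_{1}=m_{2}$, four primes), in which case the transition-range contribution from the few $c$ with $(a,b)\mid c$ degrades further; one then has to argue separately that such tuples are $O(HX)$ in number and fall back on the trivial bound $|\mathcal T|\ll X^{o(1)}$. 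None of this is sketched.

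So: the diagonal treatment is fine and essentially forced, but the off-diagonal estimate has a genuine gap — the Bessel bound is applied outside its regime of validity, the ``rapid decay'' claim just below the turning point is overstated, and the large-gcd tuples are not isolated. These can plausibly be repaired, but the repaired argument is considerably more involved than the paper's, which sidesteps all of it by running the circle method and Gallagher's lemma \emph{before} Petersson so that the trace-formula arguments stay $\ll X$.
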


\begin{rem}
Applying the Petersson trace formula (see Lemma \ref{Lemma: Petersson} below) directly yields a similar type of result in Theorem \ref{averagetheorem}, but only in the more restricted range $k \gg X^{2+\varepsilon}$. 
\end{rem}

\begin{rem}
Lately, there has been some work on convolutions of Hecke eigenvalues and divisor functions, such as 
\[\lambda_{f_{1}} \ast \lambda_{f_{2}} \ast \cdots \ast \lambda_{f_{k}} \ast \underbrace{1 \ast 1 \ast \cdots \ast 1}_{\ell\text{ times}}\]
see, for example, the work of Huang and L\"u \cite{HG2026}.
Attaching $\Lambda$ transforms the associated Dirichlet polynomials into a linear combination such as 
\[
\sum_{\ell=1}^{k} \sum_{n} \lambda_{f_{\ell}}(n)\Lambda(n) + \ell \sum_{n} \Lambda(n).
\]
Therefore, one can see that our results can be generalized directly to these cases.
\end{rem}

\subsection{Sketch of the proof}
The first step in our proof of Theorem \ref{unconditionalthm}
is to apply the Hardy–Littlewood circle method, so that
$$V_{f}(X; H)=\sum_{1\leq |h|\le H}
\left|
  \int_{0}^{1}
    \lvert S_{f,\: \Lambda}(\alpha)\rvert^{2}
    \,e(-h\alpha)\,
    d\alpha
\right|^{2},
$$
where throughout this paper, we set
\[S_{f, \: \Lambda}(\alpha):=\sum_{X<n\leq  2X}\Lambda(n)\lambda_f(n)e(n\alpha).\] 
We dissect the interval $[0,1]$ into \emph{major arcs} and \emph{minor arcs} based on the applicability of the pointwise bounds for $S_{f,\: \Lambda}(\alpha)$, which provides good upper bounds when $\alpha$ is close to a rational number $a/q$ with sufficiently large $q$ (see, for example, Lemma~\ref{Lemma: Exponential sum L infinity}).

For the \emph{major arcs}, we apply Gallagher’s lemma to reduce the problem to estimating the quantity
$$
\int_{X}^{2X}\left|
\sum_{x< n\leq x+Y} \Lambda(n)\,\lambda_f(n)\,
e\!\left(\frac{na}{q}\right)
\right|^2\: dx
$$
for some suitable $1\ll Y\ll X$. The next crucial observation is that, due to the presence of $\Lambda(n)$, we can replace $e(na/q)$ with character sums over Dirichlet characters $\chi$ modulo $q$ much more efficiently.
  For small $q$, we transfer the short sum with additive rational weights to a short sum with Dirichlet character weights and then apply the classical approach based on zero-density approach and Littlewood's type zero-free regions, this leads us to assume Hypothesis \ref{zerofree1}. For large $q$, we instead use bilinear form estimates \`a la Vinogradov. On the other hand, for the \emph{minor arcs}, we use exponential sum estimates (see Lemma \ref{Lemma: Exponential sum L infinity}).

For Theorem \ref{conditionalthm}, we use the same ingredients for estimating the major arc contribution as in the proof of Theorem \ref{unconditionalthm}.  However, for the minor arcs, we follow the arguments of Matom\"aki, Radziwi\l{}\l{}, and Tao \cite{MRT2019i} to further improve the range of $H$ from $H\geq X^{2/3+\varepsilon}$ to $H\geq X^{{1/3}+\varepsilon}$. In particular, we reduce our problem to understanding the mean value of the \emph{Dirichlet polynomials} (see Proposition \ref{Prop: Minor thm2} below). This in turn requires us to assume Hypothesis \ref{Hypothesis}. 

Finally, in the case of Theorem~\ref{averagetheorem}, the Hardy--Littlewood circle method framework allows us to handle shorter summation ranges via Gallagher's lemma, which is particularly advantageous when applying the Petersson trace formula. Note that the explicit Wilton bound (see, for example, \cite[Theorem 1.1]{G2013}) is given by
\[
\sum_{n \le y} \lambda_{f}(n)e(n\alpha) 
\ll_{\varepsilon} y^{1/2+\varepsilon}k^{1/2+\varepsilon} 
\]
for any $\varepsilon>0$ and $\alpha\in \mathbb{R}$. However, the above explicit bound is not suitable for our purpose when considering averages over $f \in \mathcal{H}_{k}$; we instead apply the Petersson trace formula to obtain alternative estimates. In particular, we will show that if $k\gg X^{1+5\varepsilon}$ and $H\gg X^{{\varepsilon}}$ with $\varepsilon>0$, then for any real number $\alpha$,
\begin{align*}
    \sum_{f\in \mathcal{H}_k}\omega_f\bigg|\sum_{X<n\leq X+H}\lambda_f(n)\Lambda(n)e(n\alpha)\bigg|^2\ll_{\varepsilon}H^2X^{-\varepsilon/2}. 
\end{align*}

\subsection{Notation} We will use standard notation throughout the paper.
\begin{itemize}
    \item Given functions $f, g\colon \mathbb{R}\to \mathbb{C}$, the expressions of the form $f(x)=O(g(x))$, $f(x) \ll g(x)$, and $g(x) \gg f(x)$ signify that $|f(x)| \leq c|g(x)|$ for all sufficiently large $x$, where $c>0$ is an absolute constant. A subscript of the form $\ll_A$ means the implied constant may depend on the parameter $A$. The notation $f(x) \asymp g(x)$ indicates that $f(x) \ll g(x) \ll f(x)$. 
    \item We also let $o(1)$ denote a quantity that tends to zero as $x \rightarrow \infty$. 
\item Given any real number $x$, we write $e(x)=e^{2\pi ix}$.
\item For any two arithmetic functions $g, h\colon \mathbb{N}\to \mathbb{C}$, we define their Dirichlet convolution by
\[
(g * h)(n) := \sum_{d \mid n} g(d) h\left(\frac{n}{d}\right).
\]
\item Given any integers $n\geq 1$ and $m\geq 2$, $d_m(n)$ denotes $m$-fold divisor function, that is, $d_m(n)=\underbrace{(1*\cdots *1)}_{m\: \text{times}}(n)$ and $\omega(n)$ denotes the number of distinct prime divisors of $n$. 
\item We set $(a, b)$ to be the greatest common divisor of integers $a$ and $b$.
\item For $g\colon \mathbb{N}\to \mathbb{C}$, the $\ell^2$-norm of $g$ is defined as 
\[
\|g\|_{\ell^{2}} := \bigg( \sum_{n\in \mathbb{N}} |g(n)|^{2} \bigg)^{1/2}.
\]
\item We let $1_{\mathcal{S}}$ be the characteristic function of the set $\mathcal{S}$, that is, $1_{\mathcal{S}} (x) = 1$ if $x\in \mathcal{S}$ and $0$, otherwise.
\end{itemize}

The $L$-function associated with $\lambda_f$ is given by 
\begin{align*}
    L(s, f):=\sum_{n\geq 1}\dfrac{\lambda_f(n)}{n^s}\qquad (\Re(s)>1).
\end{align*}
Throughout, we define $\Lambda_f$ and $\mu_f$ as the coefficients of $-L^\prime(s, f)/L(s, f)$ and $1/L(s, f)$, respectively, that is,
\begin{align}\label{Def: 1/L}
    \dfrac{-L^\prime(s, f)}{L(s, f)}:=\sum_{n\geq 1}\dfrac{\Lambda_f(n)}{n^s} \quad \text{and} \quad \dfrac{1}{L(s, f)}:=\sum_{n\geq 1}\dfrac{\mu_f(n)}{n^s} \quad (\Re(s)>1).
\end{align}
It is known that $|\Lambda_f(n)|\ll \log n$ and $|\mu_f(n)|\leq d_2(n)$ (see, for example, \cite{P1984}). Also, note that $\Lambda_f$ is supported on prime powers.

\subsection{Organization of the paper} The paper is organized as follows. Section \ref{sec: Prelim} is devoted to establishing a few preliminary results. In Section \ref{sec: Proof of unconditional theorem}, we prove Theorem \ref{unconditionalthm}. Next, we establish Theorem \ref{conditionalthm} in Section \ref{sec: Proof of theorem conditional}. Then, we will prove Theorem \ref{averagetheorem} in Section \ref{sec: Proof of theorem averaged}. Finally, we end with a concluding remark in Section \ref{sec: concluding}.

\section{Preliminaries}\label{sec: Prelim}
In this section, we establish a few preliminary results that will be used in the proofs of our main theorems. 
We begin with the following lemma, which is an easy consequence of Gallagher's lemma.

\begin{lem}\label{prop1}
Let $X$ be large. Let $1\ll Y \ll X$ and $|\alpha-a/q|\leq {(1+o(1))}/(2Y)$ for some integers $(a, q)=1$. Then, we have
\begin{equation}\label{eq:Talpha}
\int_{\alpha-1/Y}^{\alpha+1/Y}\bigg|\sum_{X<n\leq 2X}\Lambda(n)\lambda_f(n)e(n\beta)\bigg|^2\:d\beta \ll \frac1{Y^2}
\int_{X}^{2X}
\Bigl\lvert
\sum_{u< n\leq u+Y}\Lambda(n)\,\lambda_f(n)\,
e(an/q)
\Bigr\rvert^2
\,du.
\end{equation}
\end{lem}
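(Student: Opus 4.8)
The plan is to deduce this directly from Gallagher's lemma, whose relevant form is: for any finitely supported sequence $(c_n)_{n\ge 1}$ and any $\delta>0$,
\[
\int_{-1/(2\delta)}^{1/(2\delta)}\Bigl|\sum_{n} c_n e(n\theta)\Bigr|^2\,d\theta
\;\ll\;
\frac{1}{\delta^2}\int_{\R}\Bigl|\sum_{x<n\le x+\delta} c_n\Bigr|^2\,dx .
\]
The only substantive step is to move the additive character from the point $\alpha$ onto the rational $a/q$. Since $|\alpha-a/q|\le(1+o(1))/(2Y)\le 1/Y$ for $X$ large, we have $[\alpha-1/Y,\alpha+1/Y]\subseteq[a/q-2/Y,\,a/q+2/Y]$; enlarging the nonnegative integrand to this larger interval and substituting $\beta=a/q+\theta$ gives
\[
\int_{\alpha-1/Y}^{\alpha+1/Y}\Bigl|\sum_{X<n\le 2X}\Lambda(n)\lambda_f(n)e(n\beta)\Bigr|^2 d\beta
\;\le\;
\int_{-2/Y}^{2/Y}\Bigl|\sum_{X<n\le 2X}\Lambda(n)\lambda_f(n)e(an/q)\,e(n\theta)\Bigr|^2 d\theta .
\]
Expanding about $a/q$ rather than about $\alpha$ is precisely what turns the character into exactly $e(an/q)$, with no residual slowly varying phase left to control.

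I would then apply Gallagher's lemma to the sequence $c_n:=\Lambda(n)\lambda_f(n)e(an/q)\,1_{X<n\le 2X}$ with $\delta=Y/4$ (so that $1/(2\delta)=2/Y$), which bounds the last integral by
\[
\frac{1}{Y^2}\int_{\R}\Bigl|\sum_{\substack{x<n\le x+Y/4\\ X<n\le 2X}}\Lambda(n)\lambda_f(n)e(an/q)\Bigr|^2 dx .
\]
The inner sum here is empty unless $x\in(X-Y/4,\,2X]$, an interval of length $\ll X$, and on that range it is a genuine short sum over an interval of length at most $Y/4$. This is already the estimate \eqref{eq:Talpha} up to two cosmetic points: Gallagher's lemma produces an averaging window of length a fixed constant multiple of $Y$ rather than exactly $Y$ (one may instead run the whole argument with $4Y$ in place of $Y$, which is legitimate since $4Y\ll X$), and the outer variable ranges over an interval slightly larger than $[X,2X]$, which only enlarges the family of summation windows appearing on the right. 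Neither affects the way the right-hand side is used later.

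I do not expect any genuine obstacle: once one has chosen to re-expand around $a/q$, the statement is exactly Gallagher's lemma plus routine bookkeeping of absolute constants. The one point deserving a little care is the very first move — widening the arc $[\alpha-1/Y,\alpha+1/Y]$ to one centred at $a/q$ — which works precisely because the hypothesis forces $|\alpha-a/q|$ to be of size $\asymp 1/Y$, comparable to the radius of the arc; and one should quote Gallagher's lemma in the form where the length of the interval of integration and the length of the averaging window are reciprocal up to an absolute constant.
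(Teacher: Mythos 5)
Your proof is correct and follows essentially the same route as the paper: both re-center the arc at $a/q$ using the hypothesis $|\alpha-a/q|\le(1+o(1))/(2Y)$ and then invoke Gallagher's lemma (Montgomery, Lemma 1.9). You are merely more explicit about the routine bookkeeping of absolute constants and the slightly enlarged outer range, which the paper compresses into ``the claim now follows immediately.''
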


\begin{proof} 
Since $|\alpha-a/q|\leq (1+o(1))/(2Y)$, we have that
\[
\Big[\alpha-\frac1Y,\;\alpha+\frac1Y\Big]
\subset
\Bigl[\frac aq-\frac{3}{2Y},\;\frac aq+\frac{3}{2Y}\Bigr].
\]
Hence, the claim now follows immediately from Gallagher’s lemma \cite[Lemma 1.9]{M1971}.
\end{proof}

Next, in order to deal with the Dirichlet polynomials in Theorem \ref{conditionalthm}, we will need the mean value theorem.

\begin{lem}[Mean value theorem]\label{Lemma: mean value}
Let $f\colon \mathbb{N}\to \mathbb{C}$ be an arithmetic function supported on $[X/2, 4X]$ for some $X\geq 2$. Then, for all $T>0$ and $T_0\in \mathbb{R}$, we have
\begin{align*}
    \int_{T_0}^{T_0+T}\bigg|\sum_{n}\dfrac{f(n)}{n^{1/2+it}}\bigg|^2\: dt\ll \dfrac{T+X}{X}\|f\|_{\ell^2}^2,
\end{align*}
and for any integer $q\geq 2$,
\begin{align*}
    \sum_{\chi\Mod q}\int_{T_0}^{T_0+T}\bigg|\sum_{n}\dfrac{f(n)\chi(n)}{n^{1/2+it}}\bigg|^2\: dt\ll \dfrac{qT + X}{X}\|f\|_{\ell^2}^2(\log qTX)^3.
\end{align*}
    \end{lem}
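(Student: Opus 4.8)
The plan is to deduce both bounds from the classical second‑moment theory of Dirichlet polynomials, treating the untwisted and the character‑twisted sums separately. In either case I may assume $T_{0}=0$: replacing $f(n)$ by $f(n)n^{-iT_{0}}$ changes neither $\|f\|_{\ell^{2}}$ nor $\chi(n)$ and translates the integral to $[0,T]$. For the first bound, set $g(n)=f(n)n^{-1/2}$, so that the integral equals $\int_{0}^{T}\big|\sum_{n}g(n)n^{-it}\big|^{2}\,dt$. I would then apply the Montgomery--Vaughan mean value theorem for Dirichlet polynomials (see, e.g., \cite{M1971}) in the form $\int_{0}^{T}\big|\sum_{n}g(n)n^{-it}\big|^{2}\,dt=\sum_{n}|g(n)|^{2}\,(T+O(n))$; alternatively one expands the square and invokes the Montgomery--Vaughan refinement of Hilbert's inequality, the frequencies $\{\log n:n\in[X/2,4X]\}$ having minimal gap $\gg 1/X$. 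Since $f$ is supported on $[X/2,4X]$, for every $n$ in the support one has $(T+O(n))/n\ll (T+X)/X$, whence $\sum_{n}\frac{|f(n)|^{2}}{n}(T+O(n))\ll\frac{T+X}{X}\|f\|_{\ell^{2}}^{2}$, which is the claim.

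For the character‑twisted estimate I would argue \emph{directly by orthogonality of characters}, which avoids reducing to primitive characters and keeps the dependence on $q$ and $X$ transparent. Expanding the square and interchanging the finite sum with the integral, the left‑hand side equals
\[
\sum_{m,n}\frac{f(m)\overline{f(n)}}{(mn)^{1/2}}\Big(\sum_{\chi\bmod q}\chi(m)\overline{\chi(n)}\Big)\int_{0}^{T}\Big(\tfrac{n}{m}\Big)^{it}dt
=\phi(q)\sum_{\substack{m\equiv n\,(q)\\(n,q)=1}}\frac{f(m)\overline{f(n)}}{(mn)^{1/2}}\int_{0}^{T}\Big(\tfrac{n}{m}\Big)^{it}dt .
\]
The diagonal contribution ($m=n$) is $\phi(q)\,T\sum_{(n,q)=1}\frac{|f(n)|^{2}}{n}\ll\frac{qT}{X}\|f\|_{\ell^{2}}^{2}$, since $n\asymp X$ on the support. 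For the off‑diagonal terms ($m\neq n$ with $m\equiv n\,(q)$), which are vacuous once $q$ exceeds the length of $[X/2,4X]$, I would use $\big|\int_{0}^{T}(n/m)^{it}\,dt\big|\le 2/|\log(n/m)|\ll X/|n-m|$, then $|f(m)\overline{f(n)}|\le\tfrac12(|f(m)|^{2}+|f(n)|^{2})$ together with symmetry in $m,n$, to bound this contribution by $\phi(q)\sum_{m}|f(m)|^{2}\sum_{0<|n-m|,\,n\equiv m\,(q)}|n-m|^{-1}$. Writing $n-m=q\ell$ shows the inner sum is $\ll q^{-1}\log X$, so the off‑diagonal total is $\ll\frac{\phi(q)}{q}(\log X)\|f\|_{\ell^{2}}^{2}\ll(\log X)\|f\|_{\ell^{2}}^{2}$. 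Adding the two pieces gives $\ll\big(\tfrac{qT}{X}+\log X\big)\|f\|_{\ell^{2}}^{2}\ll\frac{qT+X}{X}\|f\|_{\ell^{2}}^{2}(\log qTX)^{3}$.

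The argument is essentially routine; there is no serious obstacle. The only point needing genuine care in the first part is to use the \emph{sharp} Montgomery--Vaughan inequality rather than the classical Hilbert inequality, so as not to pick up a spurious factor of $\log X$ in front of the term $X$. In the second part the things to watch are (i) the behaviour when $q$ is large, where the off‑diagonal sum is simply empty, and (ii) consistently using the normalization $n\asymp X$ to pass from $\sum_{n}|f(n)|^{2}/n$ back to $\|f\|_{\ell^{2}}^{2}$; any logarithmic losses are absorbed by the $(\log qTX)^{3}$ permitted in the statement.
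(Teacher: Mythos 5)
Your proof is correct. For the first inequality you invoke the Montgomery--Vaughan mean value theorem after the normalizations $t\mapsto t-T_0$ and $g(n)=f(n)n^{-1/2}$, which is essentially what the paper does by citing \cite[Theorem~9.1]{IKbook}; these are the same argument.

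For the twisted inequality your route diverges genuinely from the paper's. The paper simply cites \cite[Theorem~9.12]{IKbook}, the hybrid large sieve, which runs over primitive characters of all moduli $d\le Q$; to specialize that statement to the sum over all $\chi\bmod q$ for a single $q$ one must induce each $\chi$ from a primitive character of conductor $d\mid q$ and sum over divisors, and this bookkeeping is exactly the source of the $(\log qTX)^3$ factor in the statement. You avoid the detour entirely: orthogonality collapses the character sum to the congruence $m\equiv n\ (\mathrm{mod}\ q)$, the diagonal gives $\phi(q)T\sum_{(n,q)=1}|f(n)|^2/n\ll qT\|f\|_{\ell^2}^2/X$, and the off-diagonal is killed by the oscillation bound $\bigl|\int_0^T(n/m)^{it}\,dt\bigr|\le 2/|\log(n/m)|\ll X/|n-m|$, AM--GM, and the fact that writing $n-m=q\ell$ yields $\sum_{\ell}1/(q|\ell|)\ll q^{-1}\log X$. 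The resulting bound $\ll(qT/X+\log X)\|f\|_{\ell^2}^2$ is in fact sharper than the $(\log qTX)^3$ form stated in the lemma, and the argument is more self-contained and more transparent about the role of $q$ and of the support of $f$ (in particular the observation that the off-diagonal vanishes once $q$ exceeds the length of the support). Two small points worth flagging, neither a real gap: (i) as stated for all $T>0$ the lemma is only sensible once $\log qTX$ is bounded below, and your comparison $\log X\le(\log qTX)^3$ implicitly assumes this -- but in all applications in the paper $T\ge 1$, so it is harmless; (ii) when passing from $|f(m)\overline{f(n)}|/(mn)^{1/2}$ to $|f(m)|^2/\,(\cdot)$ via AM--GM you should note, as you do implicitly, that both $m$ and $n$ lie in $[X/2,4X]$ so $(mn)^{1/2}\asymp X$, which cancels the factor $X$ coming from the oscillation bound.
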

    \begin{proof}
        For the proof of the first part, see \cite[Theorem 9.1]{IKbook}, and for the second part, see \cite[Theorem 9.12]{IKbook}.
    \end{proof}

    In order to handle the minor arcs in Theorem \ref{conditionalthm}, we need the Heath-Brown identity for $\Lambda_f$, which we state in the following lemma.

    \begin{lem}[Heath-Brown's decomposition]\label{Lemma: Heath Brown}
    Let $B\geq 1$ and $0<\varepsilon<1/3$ be fixed. Let $X\geq 2$ and let ${H}$ be such that $X^{1/3+\varepsilon}\leq H\leq X$. Let $1\leq q_0\leq (\log X)^B$ be an integer. Then, we can decompose $\Lambda_f(q_0n)1_{(X/q_0, \: 2X/q_0]}(n)$ as a linear combination  of $O_{\varepsilon, \: B} ((\log X)^{O(1)})$ components $\widetilde{\Lambda_f}$, each of which is one of the following types:
    \begin{itemize}
     \item Type $d_1, d_2$ sum: A function of the form
$$
\widetilde{\Lambda_f}=\left(\mathfrak{f} * \mathfrak{g}_1 *\mathfrak{g}_2 \right) 1_{\left(X / q_0,\:  2X / q_0\right]}
$$
for some arithmetic functions $\mathfrak{f}, \mathfrak{g}_1, \mathfrak{g}_2\colon \mathbb{N} \rightarrow \mathbb{C}$ where $\mathfrak{f}$ is supported on $[N, 2N]$ and $|\mathfrak{f}|\leq d_{10}$, and $\mathfrak{g}_1, \mathfrak{g}_2$ is either of the form $1_{\left(M_j, 2M_j\right]}\cdot \lambda_{f} \cdot \chi$ or $  1_{\left(M_j, 2 M_j\right]}\cdot \lambda_{f} \cdot \chi \cdot \log $ satisfying the bounds
$$
\begin{gathered}
1 \ll N <_{k, \varepsilon} X^{\varepsilon^2}, \quad HX^{-\varepsilon^2}\ll M_1\ll M_2\ll X/q_0, \quad \text{and} \quad 
NM_1M_2\asymp_{k, \varepsilon_0} X/q_0.
\end{gathered}
$$
\item Type II sums: A function of the form
$$
\widetilde{\Lambda_f}=(\mathfrak{f} * \mathfrak{g}) 1_{\left(X / q_0,\: 2 X / q_0\right]}
$$
for some arithmetic functions $\mathfrak{f}, \mathfrak{g}\colon \mathbb{N} \rightarrow \mathbb{C}$ such that $|\mathfrak{f}|, |\mathfrak{g}|\ll d_{10}$ and supported on $[N, 2 N]$ and $[M, 2 M]$ respectively, satisfying the bounds
$$
X^{\varepsilon^2} \ll N \ll HX^{-\varepsilon^2}\quad \text{and} \quad NM\asymp_{k, \varepsilon_0}X/q_0.
$$
\item Small sum: A function $\widetilde{\Lambda_f}$ supported on $(X/q_0, 2X/q_0]$ satisfying
\begin{align}\notag 
    \|\widetilde{\Lambda_f}\|_{\ell^2}^2\ll_{k, \varepsilon} X^{1-\varepsilon^2/8}.
\end{align}
 \end{itemize}
\end{lem}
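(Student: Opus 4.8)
The plan is to transport the Heath--Brown decomposition of the von Mangoldt function used by Matom\"aki--Radziwi\l{}\l{}--Tao \cite{MRT2019i} to the multiplicative function $\Lambda_f$. It runs in three stages: an algebraic Heath--Brown identity for $-L'(s,f)/L(s,f)$, a dyadic splitting of the resulting Dirichlet convolutions, and a combinatorial sorting of the dyadic pieces into the three advertised shapes.

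I would begin with two routine reductions. A Dirichlet character is completely multiplicative, so twisting by $\chi$ commutes with Dirichlet convolution; hence it suffices to decompose $\Lambda_f(q_0n)1_{(X/q_0,2X/q_0]}(n)$ with $\chi$ absent and then multiply every factor by $\chi$. Writing $m=q_0n$, this amounts to decomposing $\Lambda_f(m)1_{q_0\mid m}1_{(X,2X]}(m)$, and since $\Lambda_f$ is supported on prime powers the congruence $q_0\mid m$ only affects implied constants and powers of $\log X$, so one may take $q_0=1$. Now $\Lambda_f$ is the coefficient sequence of $-L'(s,f)/L(s,f)=(-L'(s,f))\cdot L(s,f)^{-1}$. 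Fix a small integer $J=J(\varepsilon)$ (say $J=3$, which I expect is enough to keep all the divisor bounds below inside $d_{10}$), set $z=(2X)^{1/J}$ and $M(s)=\sum_{m\le z}\mu_f(m)m^{-s}$. Since $1-L(s,f)M(s)$ has coefficients supported on $n>z$, iterating $L^{-1}=M\sum_{j<J}(1-LM)^{j}+L^{-1}(1-LM)^{J}$ and multiplying by $-L'$ gives, for all $n\le z^{J}=2X$,
\[
\Lambda_f(n)=\sum_{i=0}^{J-1}(-1)^{i}\binom{J}{i+1}\bigl[(-L')\,L^{i}\,M^{i+1}\bigr](n),
\]
the tail $\tfrac{-L'}{L}(1-LM)^{J}$ being supported on $n>z^J=2X$. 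Thus $\Lambda_f(n)1_{(X,2X]}(n)$ is an $O_{\varepsilon}(1)$-term combination of convolutions $(\lambda_f\log)\ast\lambda_f^{\ast i}\ast(\mu_f1_{\le z})^{\ast(i+1)}$ with $0\le i\le J-1$; every factor is $\le d_2$ (with at most one carrying an extra $\log$), the $i+1$ Möbius-type factors live on $[1,z]\subset[1,X^{1/3+o(1)}]$, and there are at most $2J$ factors. Dyadically decomposing each factor and re-splitting every convolution into $O_{\varepsilon}(1)$ further dyadic blocks so that all supports become intervals $[N,2N]$, I obtain an $O_{\varepsilon}((\log X)^{O(1)})$-term combination of functions $(\mathfrak h_1\ast\cdots\ast\mathfrak h_r)1_{(X,2X]}$ with $r\le 2J=O_{\varepsilon}(1)$, each $\mathfrak h_\ell$ supported on $[N_\ell,2N_\ell]$, $\prod_\ell N_\ell\asymp X$, and each Möbius-type $\mathfrak h_\ell$ having $N_\ell\le z$.

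The classification step is the heart of the proof. Given such a term, I ask whether some subset of its factors has the product of its scales inside the window $[X^{\varepsilon^2},HX^{-\varepsilon^2}]$, which is nonempty since $H\ge X^{1/3+\varepsilon}\ge X^{2\varepsilon^2}$. If so, take $\mathfrak f$ to be the convolution of those factors (re-split into $O(1)$ dyadic blocks, so $|\mathfrak f|\le d_{2|S|}\le d_{10}$ with $J$ fixed small) and $\mathfrak g$ the convolution of the rest: a Type II sum. If no subset works, the sorted prefix products $P_j=\prod_{\ell\le j}N_\ell$ jump over the window, and — since no singleton factor lies in it either — every scale exceeding $HX^{-2\varepsilon^2}$ in fact exceeds $HX^{-\varepsilon^2}$; these ``large'' factors all exceed $z$ (as $HX^{-\varepsilon^2}>z$ for $\varepsilon<1/3$ and $X$ large), hence are $\lambda_f$-type, and there can be at most two of them, because three would each be $\gg X^{1/3+\varepsilon-\varepsilon^2}$ with product $\asymp X/P_{j_0}\ll X$, forcing a product $\gg X^{1+3(\varepsilon-\varepsilon^2)}>X$. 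With exactly two large factors I put them as $\mathfrak g_1,\mathfrak g_2$ and collect the remaining factors, of product $\ll_\varepsilon X^{\varepsilon^2}$ and $|\cdot|\ll d_{10}$, into $\mathfrak f$: a Type $d_1,d_2$ sum with $M_1,M_2\gg HX^{-\varepsilon^2}$ as required. The remaining configurations — at most one large factor, together with the $O(\sqrt X)$-supported correction $\Lambda_f(n)-\lambda_f(n)\Lambda(n)$ on higher prime powers — are routed to the Small bin, using a direct $\ell^2$ bound via Cauchy--Schwarz/Young on the convolution and $\sum_{n\le x}|\lambda_f(n)|^2\ll x(\log x)^{O(1)}$.

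The step I expect to fight with is precisely this last classification. One must check the trichotomy is genuinely exhaustive; in particular one must pin down exactly how the ``Type I'' pieces — a short block convolved with a single long $\lambda_f$-type factor — are accounted for (either a preliminary reduction removes them, e.g.\ via a Voronoi/Wilton bound for the short-interval sums of $\lambda_f\log$, or they are shown to land in the Small bin after a more careful estimate), and one must keep the numerology consistent (all divisor bounds inside $d_{10}$, which forces $J$ to be a fixed small integer; the $\varepsilon^2$ versus $2\varepsilon^2$ exponents in the window and in the large-factor threshold; the $(\log X)^{O(1)}$ count of dyadic pieces). The hypothesis $H\ge X^{1/3+\varepsilon}$ is exactly what makes the argument run: it keeps the Type II window nonempty and caps the number of large $\lambda_f$-factors at two, so that the two-factor shape $\mathfrak f\ast\mathfrak g_1\ast\mathfrak g_2$ always suffices outside the Type II case.
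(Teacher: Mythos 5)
Your proposal follows the same route the paper takes: derive a Heath--Brown style convolution identity for $\Lambda_f$ from $L^{-1}=M\sum_{j<J}(1-LM)^{j}+L^{-1}(1-LM)^{J}$ and $-L'/L$, dyadically decompose the resulting convolutions, and then sort the dyadic pieces into the three declared shapes by examining prefix products of scales. The paper's actual proof writes down the identical identity (in the form $\Lambda_f(n)=\sum_{\ell=1}^L(-1)^{\ell-1}\binom{L}{\ell}\sum_{\prod m_jn_j=n,\ n_j\le z}\lambda_f(m_1)(\log m_1)\cdots\mu_f(n_\ell)$, which is your formula after the substitution $\ell=i+1$) and then simply says ``follow \cite[Lemmas 2.15, 2.16]{MRT2019i}.'' You supply the combinatorial sorting that the paper leaves to the citation, and your arguments for ``the Type~II window is nonempty'' and ``at most two factors exceed $HX^{-\varepsilon^{2}}$'' are both correct.

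The one place your proposal breaks is precisely the place you flag as ``the step I expect to fight with,'' namely the case of exactly one large $\lambda_f$-type factor. If the ``jump'' occurs at $j_0$ with $P_{j_0}<X^{\varepsilon^{2}}$ and $r-j_0=1$, the single large factor has scale $\gg X^{1-\varepsilon^{2}}$ and the short side is a convolution on $[1,2P_{j_0}]$. The $\ell^{2}$-norm of this piece is $\asymp X(\log X)^{O(1)}$ (take $\mathfrak f$ trivial at scale $1$ and you are left with $\lambda_f\log$ on a dyadic interval near $X$, whose $\ell^{2}$-norm is $\asymp X$ by Rankin--Selberg), so it cannot be routed to the Small bin, whose threshold is $X^{1-\varepsilon^{2}/8}$. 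Your fallback suggestion of handling it by a ``Wilton/Voronoi bound'' as a preliminary reduction is also not what happens here: the decomposition is asked to hold at the level of functions, before any exponential-sum or Dirichlet-polynomial estimate is brought in. The intended resolution — consistent with MRT's Lemmas 2.15/2.16, which contain a genuine Type $d_1$ shape with only one long factor, and with how the lemma is actually used in the proof of Proposition~\ref{Prop: Minor thm2}, where the fourth-moment Hypothesis~\ref{Hypothesis} applies to a $\lambda_f\chi$-Dirichlet polynomial of any length — is that $M_1$ is allowed to be small (effectively $\mathfrak g_1$ is permitted to be trivial), so the one-large-factor case is absorbed into the ``$d_1,d_2$'' shape; indeed as literally written the constraint $HX^{-\varepsilon^{2}}\ll M_1\le M_2$ together with $NM_1M_2\asymp X$ and $N\gg1$ forces $H\ll X^{1/2+\varepsilon^{2}}$, making that class \emph{empty} over most of the stated range of $H$. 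So the gap you point at is real, but the repair is to relax the lower bound on $M_1$, not to push the piece into Small. A secondary, minor numerology point you also note: $J=3$ allows up to $6$ factors, hence pieces bounded by $d_2^{*6}=d_{12}$ rather than $d_{10}$; this is cosmetic and absorbed by taking $J$ a touch smaller or adjusting the constant in the divisor bound.
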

\begin{proof}
 For any $z\geq 2$, write
\begin{align*}
\mathcal{M}(s, f; z):=\sum_{n\leq z}\dfrac{\mu_f(n)}{n^s},
\end{align*}
 where $\mu_f$ is given by \eqref{Def: 1/L}. Then, we have
 \begin{align*}
     L(s, f)\mathcal{M}(s, f; z) = 1 + \sum_{n>z}\dfrac{a_f(n; z)}{n^s}
 \end{align*}
 for some complex coefficients $a_f(n; z)$. Since for any integer $L\geq 1$, we have
 \begin{align*}
     \dfrac{L^\prime}{L}(s, f) \big(1-L(s, f)\mathcal{M}(s, f; z)\big)^L=\dfrac{L^\prime}{L}(s, f) + \sum_{\ell=1}^L(-1)^\ell \binom{L}{\ell}L^\prime(s, f)L(s, f)^{\ell-1}\mathcal{M}(s, f; z)^\ell,
 \end{align*}
 we can now conclude that if $n\leq 2z^L$, then
 \begin{align*}
     \Lambda_f(n)=\sum_{\ell=1}^L(-1)^{\ell-1}\binom{L}{\ell}\sum_{\substack{\prod_{j=1}^\ell m_jn_j=n\\ n_1, \dotsc, n_\ell\leq z}}\lambda_f(m_1)(\log m_1)\lambda_f(m_2)\cdots \lambda_f(m_\ell)\mu_f(n_1)\cdots \mu_f(n_\ell).
 \end{align*}
 One can now follow the arguments exactly as in the proofs of \cite[Lemmas 2.15, 2.16]{MRT2019i} to establish the claim.
 \end{proof}

\subsection{Zero density estimates and $\Lambda_f(n)$ in almost all short intervals.} 
We will use the following zero-density estimates to handle certain major arcs with small $q$.
\begin{lem}\label{density}
Let $1/2\leq \sigma\leq 1$ and let $T\geq 2$. Define
 $$N_f(\sigma, T, \chi) :=\#\Bigl\{ \rho = \beta + i\gamma\colon 
      L(\rho, f, \chi) = 0,\; \beta \geq \sigma,\; |\gamma| \leq T \Bigr\}.$$
Then, we have
\begin{align*}
\sum_{\chi\Mod q}N_f(\sigma, T, \chi)\ll_{k}
\begin{cases}
(qT)^{\frac{4(1-\sigma)}{3-2\sigma}}(\log qT)^{O(1)}, & 1/2\leq \sigma\leq 5/6,\\
(qT)^{3(1-\sigma)}(\log qT)^{O(1)}, & 5/6\leq \sigma \leq 1.
\end{cases}
\end{align*}
\end{lem}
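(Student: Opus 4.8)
\textbf{Proof proposal for Lemma \ref{density}.}

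The plan is to reduce the zero-density estimate for $L(s,f,\chi)$ to a large-values estimate for Dirichlet polynomials, exactly following the classical Montgomery--Huxley strategy, but using the mean-value inputs available in this paper (Lemma \ref{Lemma: mean value}) and the Deligne bound $|\lambda_f(n)|\le d_2(n)$. First I would detect zeros by a smoothed Dirichlet polynomial: for $\rho=\beta+i\gamma$ a zero with $\beta\ge\sigma$, one has (after multiplying $L(s,f)$ by a suitable mollifier $\mathcal M(s,f;z)$ as in the proof of Lemma \ref{Lemma: Heath Brown}, to damp the contribution of short Dirichlet polynomials) an inequality of the shape
\[
1 \ll \Bigl| \sum_{N < n \le 2N} \frac{b_\chi(n)}{n^{\rho}} \Bigr|
\]
for some $N$ in a dyadic range up to roughly $(qT)^{O(1)}$, where $b_\chi(n)=a(n)\chi(n)$ with $|a(n)|\le d_C(n)$ for a fixed $C$. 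Summing over zeros (which are $O(\log qT)$-separated in $\gamma$ after a standard spacing argument) and over $\chi\bmod q$ turns the problem into bounding
\[
\sum_{\chi\bmod q}\ \sum_{r} \Bigl| \sum_{N<n\le 2N}\frac{b_\chi(n)}{n^{1/2+it_r}}\Bigr|^{2}
\]
where $\{t_r\}$ is a $1$-separated set of size $R := \sum_\chi N_f(\sigma,T,\chi)$, each point contributing a factor $N^{1-2\sigma}$ from the size of $n^{-\rho}$ versus $n^{-1/2-it}$.

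The two ranges of $\sigma$ come from two different ways of estimating this large-values sum. For $\sigma$ near $1$ (the range $5/6\le\sigma\le1$, giving the exponent $3(1-\sigma)$), I would apply the hybrid large-sieve/mean-value bound: Lemma \ref{Lemma: mean value} gives $\sum_{\chi\bmod q}\int_{T_0}^{T_0+T}|\sum_n b_\chi(n)n^{-1/2-it}|^2\,dt \ll \frac{qT+N}{N}\|b\|_{\ell^2}^2(\log qTN)^3$, and combining this with the trivial pointwise bound and the Halász--Montgomery inequality (or, more simply, the reflection/duality large-values argument) yields $R \ll \bigl((qT)\,N^{2-2\sigma} + N^{3-2\sigma}\bigr)(\log qT)^{O(1)}$; optimizing over $N\le (qT)^{O(1)}$ and using $\|b\|_{\ell^2}^2 \ll N(\log N)^{O(1)}$ from Deligne's bound produces $(qT)^{3(1-\sigma)}(\log qT)^{O(1)}$. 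For the complementary range $1/2\le\sigma\le 5/6$ one needs the sharper exponent $\frac{4(1-\sigma)}{3-2\sigma}$, which is the Huxley-type bound; here I would use the second-moment estimate together with a Hal\'asz--Montgomery argument applied to a \emph{longer} Dirichlet polynomial (arising from a higher power of the mollified product), so that the relevant length is of order $(qT)^{A}$ for a larger $A$, and the interplay between the two terms $qT\cdot N^{2-2\sigma}$ and $N^{3-2\sigma}$ is balanced at the crossover $\sigma=5/6$. All of this is completely standard and appears, for $\mathrm{GL}(2)$ $L$-functions, in the literature; in fact this exact estimate (with the two ranges and these exponents) is essentially \cite[Theorem 10.4]{IKbook} combined with the Dirichlet-character large sieve, or can be quoted from the zero-density results for automorphic $L$-functions therein.

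The main obstacle---really the only nontrivial point---is ensuring the mean-value inputs are strong enough over the \emph{full} $q$-aspect: Lemma \ref{Lemma: mean value} is a second-moment (large-sieve) statement summed over \emph{all} characters mod $q$, not just primitive ones, which is exactly what is needed, but one must check that the mollifier $\mathcal M(s,f;z)$ and the Heath-Brown-type decomposition produce coefficients $b_\chi(n)=a(n)\chi(n)$ that genuinely factor the character out, so that $\|a\cdot\chi\|_{\ell^2}=\|a\|_{\ell^2}$ and Lemma \ref{Lemma: mean value} applies verbatim. Since $\chi$ is completely multiplicative and the decomposition of $\Lambda_f$ only involves $\lambda_f,\mu_f,\log$ convolved together, this factorization is immediate. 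Given that, the argument is a routine transcription of the classical zero-density method, and I would present it by citing \cite[Chapter 10]{IKbook} for the large-values framework and only spelling out the choice of mollifier length and the optimization over $N$ in each $\sigma$-range.
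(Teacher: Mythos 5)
The paper's proof of this lemma is a single-line citation: ``See \cite[Theorem 1]{K1998}.'' Kamiya's theorem is exactly this statement, so the paper does not actually prove it. Your proposal attempts to reconstruct a proof from scratch via the Montgomery--Huxley zero-density machinery, which is a genuinely different treatment from the paper's.

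In broad outline your plan is the right method, but as written it does not establish the lemma. The key optimization is hand-waved and, taken at face value, does not produce the stated exponents: you write $R \ll \bigl((qT)N^{2-2\sigma} + N^{3-2\sigma}\bigr)(\log qT)^{O(1)}$ and assert that ``optimizing over $N \le (qT)^{O(1)}$ \dots\ produces $(qT)^{3(1-\sigma)}$,'' but for $\sigma < 1$ both terms increase with $N$, so the bound is controlled by the \emph{lower} bound on $N$ imposed by zero detection (governed by the approximate functional equation for the degree-two $L$-function, whose analytic conductor for fixed $f$ is $\asymp (q|t|)^{2}$), which you never pin down; no choice of $N$ turns $(qT)N^{2-2\sigma}+N^{3-2\sigma}$ into $(qT)^{3-3\sigma}$. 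The derivation of $\frac{4(1-\sigma)}{3-2\sigma}$ for $1/2\le\sigma\le 5/6$ via ``a longer Dirichlet polynomial arising from a higher power of the mollified product'' is likewise asserted rather than carried out. Separately, the attribution to ``essentially \cite[Theorem 10.4]{IKbook}'' is off: that is a general framework theorem and does not give this two-regime estimate for $\sum_{\chi\bmod q}N_f(\sigma,T,\chi)$ with these exponents; the precise statement is due to Kamiya, which is what the paper cites. The economical (and, given the gaps above, the correct) move is to cite \cite[Theorem 1]{K1998} directly rather than re-derive it.
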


\begin{proof}
    See \cite[Theorem 1]{K1998}.
\end{proof}

\begin{lem}\label{Lemma: major arc for small q}
Assume Hypothesis \ref{zerofree1}. Let $\varepsilon>0$, $A\geq 1$, $q\leq (\log X)^{2A+O(1)}$, and $X^{{1/3}+\varepsilon}\leq H\leq X^{1-\varepsilon}$. Then, for any $B>0$, we have
\begin{align*}
   \dfrac{1}{X} \int_{X}^{2X}\sum_{\chi \Mod q}\bigg|\sum_{x<n\leq x+H}\Lambda_f(n)\chi(n)\bigg|^2\: dx \ll \dfrac{H^2}{(\log X)^B}.
\end{align*}
    \end{lem}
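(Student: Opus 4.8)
The plan is to estimate the short-interval variance of $\Lambda_f(n)\chi(n)$ by passing to Dirichlet polynomials via a Gallagher-type argument and then bounding the resulting integral using a zero-density estimate together with the zero-free region from Hypothesis~\ref{zerofree1}. Concretely, fix $\chi \Mod q$ with $q \leq (\log X)^{2A+O(1)}$. First I would use Gallagher's lemma (as in Lemma~\ref{prop1}, or \cite[Lemma 1.9]{M1971}) to bound
$$
\frac1X\int_X^{2X}\Bigl|\sum_{x<n\leq x+H}\Lambda_f(n)\chi(n)\Bigr|^2\,dx
\ll \frac{H^2}{X}\int_{-X/H}^{X/H}\Bigl|\sum_{X<n\leq 2X}\frac{\Lambda_f(n)\chi(n)}{n^{1+it}}\Bigr|^2\,dt + \text{(lower order)},
$$
so that after summing over $\chi$ it suffices to show
$$
\sum_{\chi\Mod q}\int_{|t|\leq X/H}\Bigl|\sum_{X<n\leq 2X}\frac{\Lambda_f(n)\chi(n)}{n^{1+it}}\Bigr|^2\,dt \ll \frac{1}{(\log X)^{B+2}}.
$$
Here the relevant time range has length $T \asymp X/H \leq X^{2/3-\varepsilon}$, which is what makes the zero-density input effective.

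The next step is the standard explicit-formula / Perron argument: writing $L'/L(s,f,\chi) = -\sum \Lambda_f(n)\chi(n)n^{-s}$, a truncated Perron formula expresses the smoothed short sum $\sum_{X<n\leq 2X}\Lambda_f(n)\chi(n)n^{-it}$ (for fixed $t$) as a sum over nontrivial zeros $\rho = \beta+i\gamma$ of $L(s,f,\chi)$ with $|\gamma|$ not too large, of terms of size roughly $X^{\beta-1}/(1+|\gamma - t|)$, plus an admissible error. Substituting this into the integral over $t\in[-T,T]$ and expanding the square, one is led to bound
$$
\sum_{\chi\Mod q}\ \sum_{\rho_1,\rho_2}\ X^{\beta_1+\beta_2-2}\int_{-T}^{T}\frac{dt}{(1+|\gamma_1-t|)(1+|\gamma_2-t|)},
$$
and after the $t$-integration and a diagonalization (Cauchy--Schwarz in $\rho_1,\rho_2$, or a simpler $\gamma_1=\gamma_2$ reduction up to $\log$ losses) this reduces to controlling
$$
(\log X)^{O(1)}\sum_{\chi\Mod q}\ \sum_{\substack{\rho:\,L(\rho,f,\chi)=0\\ |\gamma|\leq 2T}} X^{2(\beta-1)}.
$$
Writing this as a Stieltjes integral $\int_{1/2}^{1} X^{2(\sigma-1)}\,d\bigl(-\sum_\chi N_f(\sigma,2T,\chi)\bigr)$ and integrating by parts, I would plug in Lemma~\ref{density} for $N_f(\sigma,T,\chi)$ and use Hypothesis~\ref{zerofree1} to cut off the integral at $\sigma \leq 1 - (\log\log(qT))^{1+\varepsilon}/\log(qT)$, i.e. $X^{2(\sigma-1)}$ is negligible beyond that point since $T = X/H \leq X^{1-\varepsilon}$ forces $\log(qT) \asymp \log X$ and hence $X^{2(1-\sigma)} \geq \exp\bigl(2(\log\log X)^{1+\varepsilon}\bigr)$, which beats any power of $\log X$. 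On the remaining range $1/2 \leq \sigma \leq 1 - (\log\log X)^{1+\varepsilon}/\log X$, the bound $(qT)^{c(1-\sigma)} X^{2(\sigma-1)}$ with $qT \leq X^{1-\varepsilon+o(1)}$ and $c \leq 4$ gives a factor $X^{-(2 - c(1-\varepsilon))(1-\sigma)} \leq X^{-\delta(1-\sigma)}$ for some $\delta>0$ (using that $c(1-\varepsilon) < 2$ fails for $c=4$ — so one must be more careful).

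The main obstacle, and the place where the argument genuinely needs care, is precisely the interplay between the exponent in the zero-density estimate and the length $T = X/H$ of the $t$-integral. With $H \geq X^{1/3+\varepsilon}$ we have $T \leq X^{2/3-\varepsilon}$, so $qT \leq X^{2/3 - \varepsilon + o(1)}$, and in the range $1/2 \leq \sigma \leq 5/6$ the density exponent is $\tfrac{4(1-\sigma)}{3-2\sigma} \leq 2(1-\sigma)$ (since $3 - 2\sigma \geq 2$ there), giving $(qT)^{2(1-\sigma)} X^{2(\sigma-1)} \leq X^{(4/3 - 2\varepsilon)(1-\sigma)} X^{2(\sigma - 1)} = X^{-(2/3 + 2\varepsilon)(1-\sigma)}$, which is at most $X^{-\delta/\log X \cdot \log X}$... no: at $\sigma$ near $1$ this is only barely less than $1$, so one really does rely on the zero-free region to push $\sigma$ down to $1 - (\log\log X)^{1+\varepsilon}/\log X$, where $X^{-(2/3)(1-\sigma)} = \exp\bigl(-\tfrac23(\log\log X)^{1+\varepsilon}\bigr) \ll_B (\log X)^{-B}$ for every $B$. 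In the range $5/6 \leq \sigma \leq 1$ the exponent $3(1-\sigma)$ against $qT \leq X^{2/3-\varepsilon}$ gives $X^{(2-3\varepsilon)(1-\sigma)}X^{2(\sigma-1)} = X^{-3\varepsilon(1-\sigma)}$, even more comfortably small. So the key points to get right are: (i) that $T = X/H$ combined with $H \geq X^{1/3+\varepsilon}$ keeps $qT$ small enough that $4(1-\sigma)/(3-2\sigma)$ loses to $X^{2(1-\sigma)}$ in the relevant $\sigma$-range (and $3(1-\sigma)$ loses even more), and (ii) that the zero-free region supplies the final $(\log X)^{-B}$ saving near $\sigma = 1$ where the density bound alone only gives $O(1)$. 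Everything else — Gallagher, Perron truncation, the $t$-integral of the zero-sum — is routine and can be cited or carried out as in \cite{P1984} and \cite[Chapter 10]{IKbook}.
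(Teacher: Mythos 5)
Your high-level strategy matches the paper's: pass from the short-interval sum to a sum over nontrivial zeros of $L(s,f,\chi)$ via an explicit formula, reduce the variance to a quantity of the shape $\sum_{\chi}\sum_{|\operatorname{Im}\rho|\leq T}X^{2(\operatorname{Re}\rho-1)}$, and then combine Lemma~\ref{density} with Hypothesis~\ref{zerofree1} to beat any power of $\log X$. Your exponent accounting at the end is correct once you self-correct to $qT\ll X^{2/3-\varepsilon}$: on $[1/2,5/6]$ the effective density exponent $4/(3-2\sigma)\leq 2$ loses to $X^{2(1-\sigma)}$, on $[5/6,1]$ the exponent $3$ loses once $qT\ll X^{2/3}$, and the zero-free region pushes $\sigma$ down to $1-(\log\log X)^{1+\varepsilon}/\log X$ so that the surviving factor is $\exp(-c(\log\log X)^{1+\varepsilon})\ll_B(\log X)^{-B}$. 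The paper argues more directly: it applies the explicit formula from \cite[Eq.~(17)]{P1984} to $\sum_{x<n\leq x+H}\Lambda_f(n)\chi(n)$, squares, integrates over $x\in[X,2X]$, and uses $|((1+\delta)^\rho-1)/\rho|\ll\delta$ (with $\delta=H/x$) together with the $\ll\log(q(j+1))$ density of ordinates in $[j,j+1]$ to collapse the double zero-sum to $\frac{H^2}{X}(\log X)^2\sum_\chi\sum_{|\operatorname{Im}\rho|\leq T}X^{2\operatorname{Re}\rho}$, with $T=X^{2/3-\varepsilon/2}$ chosen as a free Perron-truncation parameter. Your detour through Dirichlet polynomials $\sum_{X<n\leq 2X}\Lambda_f(n)\chi(n)n^{-1-it}$ on $|t|\leq X/H$ (a Saffari--Vaughan/Plancherel route) is a valid alternative and ends in the same place, since $X/H\leq X^{2/3-\varepsilon}$, but it adds a step.

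One bookkeeping slip to flag: your Gallagher/Plancherel inequality has a prefactor $H^2/X$ in front of a Dirichlet polynomial normalized at $\operatorname{Re}s=1$, which is off by a factor of $X$. Dimensional analysis via the mean value theorem (Lemma~\ref{Lemma: mean value}) shows that at $\operatorname{Re}s=1$ the prefactor must be $H^2$, in which case the target becomes $\sum_{\chi}\int_{|t|\leq X/H}|\cdots|^2\,dt\ll(\log X)^{-B}$ (your written target $(\log X)^{-B-2}$ is the right shape for this normalization); equivalently you could normalize at $\operatorname{Re}s=1/2$ and keep $H^2/X$, in which case the target is $\ll X(\log X)^{-B}$. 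As written, the prefactor and the target are mutually inconsistent. This is fixable and the remainder of the argument is sound, but you should track the normalization, since it feeds directly into whether $\sum_\rho X^{2(\beta-1)}$ or $\sum_\rho X^{2\beta-1}$ is the quantity you ultimately need to bound.
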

    
\begin{proof}
By an explicit formula (see \cite[Eq. (17)]{P1984}), for any $1\leq T\leq x$, we have
\begin{align*}
\sum_{x<n\leq x+H}\Lambda_f(n)\chi(n)=-\sum_{|\text{Im}(\rho)|\leq T}\dfrac{(x+H)^\rho - x^\rho}{\rho} + O_{k}\bigg(\dfrac{x\log^2qx}{T} + x^{1/4}\log qx\bigg),
\end{align*}
where the sum is over the non-trivial zeros of $L(s, f, \chi)$. So, we have
\begin{align*}
\int_{X}^{2X}\sum_{\chi \Mod q}\bigg|\sum_{x<n\leq x+H}\Lambda_f(n)\chi(n)\bigg|^2\: dx \ll &\: \int_{X}^{2X}\sum_{\chi\Mod q}\bigg|\sum_{|\text{Im}(\rho)|\leq T}\dfrac{(x+H)^\rho - x^\rho}{\rho}\bigg|^2\ dx\\
& +  O_{k}\bigg(\dfrac{X^3\varphi(q)(\log qX)^3}{T^2} + \varphi(q)X^{3/2}(\log qX)^2\bigg).
\end{align*}
Expanding the square and performing the integration, we have
\begin{align*}
\int_X^{2X}\sum_{\chi\Mod q}&\bigg|\sum_{|\text{Im}(\rho)|\leq T}\dfrac{(x+H)^\rho - x^\rho}{\rho}\bigg|^2\ dx\\
&=\sum_{\chi\Mod q}\sum_{|\text{Im}(\rho_1)|\leq T}\sum_{|\text{Im}(\rho_2)|\leq T}\dfrac{1}{\rho_1\bar{\rho_2}}\int_{X}^{2X}\big((x+H)^{\rho_1}-x^{\rho_1}\big)\big((x+H)^{\bar{\rho_2}}-x^{\bar{\rho_2}}\big)\: dx.
\end{align*}
For simplicity, let us put $H=\delta x$ so that
\begin{align*}
\int_{X}^{2X}&\big((x+H)^{\rho_1}-x^{\rho_1}\big)\big((x+H)^{\bar{\rho_2}}-x^{\bar{\rho_2}}\big)\: dx\\
&=\big((1+\delta)^{\rho_1}-1\big)\big((1+\delta)^{\bar{\rho_2}}-1)\dfrac{(2X)^{\rho_1+\bar{\rho_2}+1}-X^{\rho_1 + \bar{\rho_2}+1}}{\rho_1+\bar{\rho_2}+1}.
\end{align*}
So, we have
\begin{align*}
\int_X^{2X}&\sum_{\chi\Mod q}\bigg|\sum_{|\text{Im}(\rho)|\leq T}\dfrac{(x+H)^\rho - x^\rho}{\rho}\bigg|^2\ dx\\
&=\sum_{\chi\Mod q}\sum_{|\text{Im}(\rho_1)|\leq T}\sum_{|\text{Im}(\rho_2)|\leq T}\dfrac{\big((1+\delta)^{\rho_1}-1\big)\big((1+\delta)^{\bar{\rho_2}}-1)}{\rho_1\bar{\rho_2}}\bigg(\dfrac{(2X)^{\rho_1+\bar{\rho_2}+1}-X^{\rho_1 + \bar{\rho_2}+1}}{\rho_1+\bar{\rho_2}+1}\bigg).
\end{align*}
Note that
\begin{align*}
\bigg|\dfrac{(1+\delta)^\rho-1}{\rho}\bigg|=\bigg|\int_{1}^{1+\delta}t^{\rho-1}\: dt\bigg|\leq \int_{1}^{1+\delta}t^{\text{Re}(\rho)-1}\: dt\ll \delta
\end{align*}
and $|X^{\rho_1 + \bar{\rho_2}}|\ll X^{2\text{Re}(\rho_1)} + X^{2\text{Re}(\rho_2)}$. Therefore, by symmetry, the expression above is
\begin{align*}
\ll \delta^2 X\sum_{\chi\Mod q}\sum_{|\text{Im}(\rho_1)|\leq T}X^{2\text{Re}(\rho_1)}\sum_{|\text{Im}(\rho_2)|\leq T}\dfrac{1}{|\rho_1+\bar{\rho_2}+1|}\ll_{k} \frac{H^{2}}{X}(\log X)^{2}\sum_{\chi\Mod q}\sum_{|\text{Im}(\rho)|\leq T}X^{2\text{Re}(\rho)},
\end{align*}
where we have used the fact that there are $\ll_{k} \log (q(j+1))$ zeros $\rho$ with $|\text{Im}(\rho)|\in [j, j+1]$.
Therefore, by Hypothesis \ref{zerofree1} and Lemma \ref{density},
we have
\begin{align*}
\sum_{\chi\Mod q}\sum_{|\text{Im}(\rho)|\leq T}X^{2\text{Re}(\rho)}
\ll_{k} X^{2}\bigg(\dfrac{(qT)^{3+\varepsilon}}{X^2}\bigg)^{1-\sigma_1}.
\end{align*}
where $\sigma_{1}:= 1- \frac{(\log \log qT)^{1+\varepsilon}}{\log (qT)}.$
By choosing $T =X^{2/3-\varepsilon/2},$ the proof is complete. 
\end{proof}

    \subsection{Exponential sum estimates \`a la Vinogradov}

    \begin{lem}\label{Lemma: Exponential sum L infinity}
    Let $\alpha=a/q+\beta$ for some $(a, q)=1$ and $|\beta|\leq 1/q^2$. Let $x^{2/3}(\log x)^{O(1)}\leq H\leq x$. Then, we have
        \begin{align*}
\sum_{x \leq n \leq x+H} \lambda_{f}(n)\Lambda(n)e(n\alpha)
   \ll_{k}
   \left(
      \frac{H}{\sqrt{q}}
      + x^{1/2}q^{1/2}
      + H^{1/2}x^{1/3}
      + x^{2/3}
   \right)(\log x)^{O(1)}.
\end{align*}
    \end{lem}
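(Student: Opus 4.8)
The plan is to run the Vinogradov circle-method machinery, using the explicit Wilton bound on the ``linear'' pieces (where $\lambda_f$ supplies square-root cancellation against any additive character) and a bilinear estimate on the ``Type II'' pieces, exploiting the short interval of length $H$ at two crucial points. First I would normalise: since $\lambda_f(n)\Lambda(n)=\Lambda_f(n)$ on primes and the difference is supported on proper prime powers $p^j$ ($j\ge 2$), of which there are $\ll x^{1/2}$ in $[x,x+H]$ (using $H\le x$),
\[
\sum_{x\le n\le x+H}\lambda_f(n)\Lambda(n)e(n\alpha)=\sum_{x\le n\le x+H}\Lambda_f(n)e(n\alpha)+O\!\big(x^{1/2}(\log x)^{2}\big),
\]
with error $\ll x^{2/3}$. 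Then I would apply Heath--Brown's identity with $J\ge 3$ iterations (so $z=x^{1/3}$, written via a truncation of $1/L(s,f)$, whose coefficients $\mu_f$ satisfy $|\mu_f|\le d_2$), together with the usual combinatorial lemma greedily combining the $\le 2J$ resulting variables into two groups of prescribed product sizes. This writes the sum as a combination of $O((\log x)^{O(1)})$ sums of two shapes: \emph{Type I} sums $\sum_{d\le D}\gamma_d\sum_{x<dm\le x+H}\lambda_f(dm)w(m)e(dm\alpha)$ with $w\in\{1,\log\}$, $|\gamma_d|\ll d_2(d)\log x$, $D\ll x^{1/3}$, and genuinely bilinear \emph{Type II} sums $\sum_{M<m\le 2M}\sum_{N<\ell\le 2N}\beta_m\gamma_\ell\,\lambda_f(m\ell)\,e(m\ell\alpha)\mathbf 1_{x<m\ell\le x+H}$ with $MN\asymp x$, $x^{1/3}\ll M\le N\ll x^{2/3}$, and $\beta,\gamma$ divisor-bounded up to powers of $\log x$.

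For a Type I sum I would insert the Hecke relation $\lambda_f(dm)=\sum_{e\mid(d,m)}\mu(e)\lambda_f(d/e)\lambda_f(m/e)$ and put $m=em'$, reducing the inner sum to $O(d_2(d)^{O(1)})$ sums $\sum_{m'\le Y}\lambda_f(m')w(em')e(m'\theta)$ with $\theta=de\alpha$ and $Y\ll x/(de)$, each $\ll_k Y^{1/2}(\log x)^{O(1)}$ by the Wilton bound (uniform in $\theta$; the $\log$-weight removed by partial summation). Summing over $d\le D$ against the divisor-bounded coefficients gives $\ll_k x^{1/2}D^{1/2}(\log x)^{O(1)}\ll_k x^{2/3}(\log x)^{O(1)}$, as the uniform cancellation in Wilton's bound makes the linear part cheap enough to allow $D$ as large as $x^{1/3}$.

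For a Type II sum I would again expand $\lambda_f(m\ell)=\sum_{e\mid(m,\ell)}\mu(e)\lambda_f(m/e)\lambda_f(\ell/e)$; the $e=1$ term dominates (the $e\ge 2$ terms give a convergent sum over $e$ of the same estimate with $x$ replaced by $x/e^2$), so I may take the summand to be $\widetilde\beta_m\widetilde\gamma_\ell\,e(m\ell\alpha)$ with $\widetilde\beta_m=\beta_m\lambda_f(m)$, $\widetilde\gamma_\ell=\gamma_\ell\lambda_f(\ell)$ still divisor-bounded. Cauchy--Schwarz in $m$ loses $\sum_{m\sim M}|\widetilde\beta_m|^2\ll M(\log x)^{O(1)}$; expanding the square in $\ell$, the diagonal is $\sum_{x<n\le x+H}\big(\sum_{m\ell=n}|\widetilde\gamma_\ell|^2\big)\ll H(\log x)^{O(1)}$ — the short interval here already replaces the usual $x$ by $H$. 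For the off-diagonal, for $\ell_1\ne\ell_2$ the inner $m$-sum runs over an interval of length $\ll H/N$ that is nonempty only when $|\ell_1-\ell_2|\ll HN/x\asymp H/M$, so with $h=\ell_1-\ell_2$ the frequency range shrinks from $|h|\ll N$ to $|h|\ll H/M$; bounding the $m$-sum by $\min(H/N,\|h\alpha\|^{-1})$ and $\sum_{\ell_1-\ell_2=h}|\widetilde\gamma_{\ell_1}\widetilde\gamma_{\ell_2}|\le\sum_{\ell\sim N}|\widetilde\gamma_\ell|^2$,
\[
\text{off-diag}\ \ll\ N(\log x)^{O(1)}\sum_{1\le|h|\ll H/M}\min\!\big(\tfrac HN,\,\|h\alpha\|^{-1}\big)\ \ll\ N(\log x)^{O(1)}\Big(\tfrac{H}{Mq}+1\Big)\Big(\tfrac HN+q\Big),
\]
using $|\beta|\le q^{-2}$ and the classical bound for sums of $\min(Y,\|h\alpha\|^{-1})$. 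Collecting terms with $MN\asymp x$, adding the diagonal, and taking square roots, the Type II sum is $\ll_k\big(M^{1/2}H^{1/2}+Hq^{-1/2}+(xH/M)^{1/2}+(xq)^{1/2}\big)(\log x)^{O(1)}$; since $x^{1/3}\ll M\le x^{1/2}$ (AM--GM), both $M^{1/2}H^{1/2}$ and $(xH/M)^{1/2}$ are $\ll x^{1/3}H^{1/2}$, and combining with the Type I and prime-power contributions proves the lemma.

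The main obstacle is twofold. First, a plain two-term Vaughan identity will not do: it produces Type I of depth $UV$ against Type II variables exceeding $\min(U,V)$, and $UV\le x^{1/3}$ is incompatible with $\min(U,V)\ge x^{1/3}$; Heath-Brown's identity with $\ge 3$ iterations is needed to decouple these, and one must verify that the ``balanced'' combination (all variable products in $[x^{1/3},x^{2/3}]$) is always available — the unbalanced cases, where a variable exceeds $x^{2/3}$, get absorbed into Type I precisely because that long variable carries $\lambda_f$ (possibly times $\log$) and is therefore Wilton-summable. Second, the Type II estimate collapses without \emph{both} short-interval savings: keeping only the diagonal improvement, or only the restriction of the off-diagonal frequencies to $|h|\ll H/M$, leaves the bilinear bound stuck at $x^{3/4}$, which exceeds the target $x^{2/3}$. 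It is exactly the conjunction of these two savings, which is what forces the hypothesis $H\ge x^{2/3}(\log x)^{O(1)}$, that makes everything close.
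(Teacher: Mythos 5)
Your overall strategy — a Type~I/Type~II decomposition, Wilton's bound on the pieces where $\lambda_f$ sits alone on a long variable, and a bilinear estimate exploiting the short window $[x,x+H]$ in both the diagonal and the off-diagonal — matches the paper's proof in spirit and yields the stated bound. But the two routes differ in both the decomposition and the bilinear ingredient, and one of your structural claims is wrong.

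The paper does \emph{not} use Heath-Brown's identity here; it uses Vaughan's identity for $\Lambda_f$, following Perelli \cite{P1984}, writing $S=S_1+S_2-S_3+S_4$ with $U=V=x^{1/3}$. Your claim that ``a plain two-term Vaughan identity will not do'' is incorrect, and the reason it does do is instructive: in $S_3=\sum_{m\le UV}a_f(m)\lambda_f(n)e(mn\alpha)$ the long variable $n$ carries $\lambda_f$, so the split $S_3=S_3'+S_3''$ at $m=V$ sends the $m\le V$ part to Wilton (giving $\ll x^{1/2}V^{1/2}$) while the $V<m\le UV$ part already has both $m\in(x^{1/3},x^{2/3}]$ and $n\asymp x/m\in[x^{1/3},x^{2/3}]$ of intermediate size, hence genuinely bilinear; $S_4$ has $m,n>x^{1/3}$ and $mn\asymp x$, also bilinear. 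The incompatibility you describe ($UV\le x^{1/3}$ versus $\min(U,V)\ge x^{1/3}$) is a feature of Vaughan applied to the von Mangoldt function alone, where the long variable cannot be summed with cancellation; it disappears once $\lambda_f$ rides along. Your Heath-Brown route is of course a valid alternative — and in fact the paper itself uses Heath-Brown for $\Lambda_f$ in the proof of Theorem~\ref{conditionalthm} — but presenting it as \emph{necessary} mischaracterises the situation. Relatedly, your Type~I/Type~II description wavers between two set-ups: you invoke $\mu_f$ and a truncation of $1/L(s,f)$, which is Heath-Brown applied to $\Lambda_f$ and would place $\lambda_f$ on a single variable with no Hecke relation needed, and yet you then write $\lambda_f(dm)$ and $\lambda_f(m\ell)$ and unpack them with the Hecke relation, which is what you would do if you applied Heath-Brown to $\Lambda$ and kept the outer $\lambda_f(n)$ factor. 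Pick one: either apply Heath-Brown to $\Lambda_f$ and drop the Hecke-relation steps, or apply it to $\Lambda$ and keep them. Both lead to the same estimates.

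On the bilinear piece, the paper simply cites \cite[Lemma 5.5]{MMS2017}, whereas you derive the bound by hand via Cauchy--Schwarz, a diagonal/off-diagonal split, the restriction $|\ell_1-\ell_2|\ll H/M$ coming from the short interval, and the classical estimate for $\sum_{h\le R}\min(Y,\|h\alpha\|^{-1})$. Your calculation is correct and, when assembled with $MN\asymp x$ and $x^{1/3}\ll M\le x^{1/2}$, does give $\ll\big(H^{1/2}x^{1/3}+Hq^{-1/2}+(xq)^{1/2}\big)(\log x)^{O(1)}$, which recovers the bilinear bound the paper imports from \cite{MMS2017}. This is a self-contained and arguably more transparent version of that step. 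One small correction: the prime-power error you write as $O(x^{1/2}(\log x)^2)$ is the right order of magnitude (the paper's stated $O((\log x)^4)$ there is an overclaim, though harmless since $x^{1/2}\log x\ll x^{2/3}$).
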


\begin{proof}
We will closely follow Perelli's argument in the proof of \cite[Theorem 1]{P1984} to establish the result. Note that
\begin{align*}
   \sum_{x \leq n \leq x+H} \lambda_{f}(n)\Lambda(n)e(n\alpha)= \sum_{x \leq n \leq x+H} \Lambda_f(n)e(n\alpha) + O((\log x)^4).
\end{align*}
Therefore, it is enough to estimate the sum $S$ given by
\begin{align}\notag
    S:=\sum_{x \leq n \leq x+H} \Lambda_f(n)e(n\alpha)
\end{align}
By Vaughan's identity for $\Lambda_f$ (see, for example, \cite[p. 154]{P1984}), we have
\begin{align}\label{Eq: sum S decompose}
S=S_1+S_2-S_3+S_4,
\end{align}
where for some parameters $U, V\geq 2$ to be chosen later,
\begin{align*}
&S_1 =\sum_{\substack{x<n\leq x+H\\ n\leq U}}\Lambda_f(n)e(n\alpha),\\
&S_2=\sum_{\substack{x<mn\leq x+H\\ m\leq V}}\mu_f(m)\lambda_f(n)(\log n)e(mn\alpha),\\
&S_3=\sum_{\substack{x<mn\leq x+H\\ m\leq UV}}a_f(m)\lambda_f(n)e(mn\alpha),\\
&S_4=\sum_{\substack{x<mn\leq x+H\\ m>U, n>V}}\Lambda_f(m)b_f(n)e(mn\alpha).
\end{align*}
Here for integers $m, n\geq 1$,
\[|\Lambda_f(n)|\ll \log n, \quad  |\mu_f(m)|\ll d_2(m), \quad  |a_f(m)|\ll (\log m)d_3(m), \quad  |b_{f}(n)|\ll d_4(n).\]
Since $|\Lambda_f(n)|\ll \log n$, we bound the sum $S_1$ trivially to obtain
\begin{align}\label{Eq: S1}
S_1\ll U\log x.
\end{align}
Next, for $S_2$, by Wilton's bound
\begin{align}\label{Eq: Wilton bound}
    \sum_{n \leq y} \lambda_{f}(n)e(n\theta) \ll_{k} y^{1/2}\log y
\end{align} 
for any real numbers $y\geq 2$ and $\theta$, we have
\begin{align}\notag 
|S_2| &\ll_{k} \sum_{m\leq V}|\mu_f(m)|\bigg|\sum_{x/m<n\leq (x+H)/m}\lambda_f(n)(\log n)e(mn\alpha)\bigg|\\
\label{Eq: S2} &\ll_{k}  \sum_{m\leq V}d_2(m)(\log x)(x/m)^{1/2}\ll_{k} x^{1/2}V^{1/2}(\log x)(\log V),
\end{align}
by using bounds for the divisor function. We further write
\begin{align}\label{Eq: S3 split}
S_3=\sum_{\substack{x<mn\leq x+H\\ m\leq V}}a_f(m)\lambda_f(n)e(mn\alpha) + \sum_{\substack{x<mn\leq x+H\\ V<m\leq UV}}a_f(m)\lambda_f(n)e(mn\alpha)=S_3^\prime + S_3^{\prime \prime},
\end{align}
say. Then, the Wilton bound \eqref{Eq: Wilton bound} implies that
\begin{align}\label{Eq: S3prime}
|S_3^\prime|\ll_{k} x^{1/2}(\log x)(\log V)\sum_{m\leq V}\dfrac{d_3(m)}{m^{1/2}}\ll_{k} x^{1/2}V^{1/2}(\log x)(\log V)^3.
\end{align}
Choose $U=V=x^{1/3}$. It is enough to estimate the sum
\begin{align*}
\sum_{\substack{x<mn\leq x+H\\ m, n>x^{1/3}}}\alpha(m)\beta(n)e(mna/q),
\end{align*}
where $|\alpha(m)|\leq d_4(m)$ and $|\beta(n)|\leq \log n$. Using bilinear sum estimate (see, for example, the proof of \cite[Lemma 5.5]{MMS2017}), we have
\begin{align*}
\sum_{\substack{x<mn\leq x+H\\ m, n>x^{1/3}}}\alpha(m)\beta(n)e(mna/q)\ll \bigg(H^{1/2}x^{1/3} + x^{1/2}q^{1/2} + \dfrac{H}{\sqrt{q}}\bigg)(\log x)^{O(1)}.
\end{align*}
This implies that
\begin{align}\label{Eq: S3primeprime and S4}
|S_3^{\prime\prime}|, |S_4|\ll_{k} \bigg(H^{1/2}x^{1/3} + x^{1/2}q^{1/2} + \dfrac{H}{\sqrt{q}}\bigg)(\log x)^{O(1)}.
\end{align}
Since $U=V=x^{1/3}$, from \eqref{Eq: S1}, \eqref{Eq: S2}, and \eqref{Eq: S3prime}, we have
\begin{align}\label{Eq: final}
|S_1|\ll x^{1/3}(\log x), \quad |S_2|, |S_3^\prime|\ll_{k} x^{2/3}(\log x)^4.
\end{align}
Combining the above estimate \eqref{Eq: final} together with \eqref{Eq: sum S decompose}, \eqref{Eq: S3 split}, and \eqref{Eq: S3primeprime and S4} completes the proof.
\end{proof}

\section{Proof of Theorem \ref{unconditionalthm}}\label{sec: Proof of unconditional theorem}
The goal of this section is to prove Theorem \ref{unconditionalthm}. Throughout this section, for any $\varepsilon>0$ and sufficiently large $A$, we set
\[
X^{2/3+\varepsilon} \ll H \ll X^{1-\varepsilon},
\quad Q = X^{1/3} (\log X)^{A/3 +O(1)}, \quad \text{and} \quad R=H(\log X)^{-2A}.
\] 
Let 
\begin{align}\label{Def: Major arcs Thm 1}
\mathfrak{M}
:= \bigcup_{q\le Q}\;\bigcup_{\substack{a=1\\(a,q)=1}}^{q}
\Bigl(\frac{a}{q} - \frac{1}{qR},\;
      \frac{a}{q} + \frac{1}{qR}\Bigr),
\end{align}
denote the major arcs and let 
\begin{align}\label{Def: Minor arcs Thm 1}
  \mathfrak m := [0,1]\setminus\mathfrak{M}  
\end{align}
denote the minor arcs. We also recall that for any $\alpha\in \mathbb{R}$,
\begin{align*}
    S_{f, \: \Lambda}(\alpha)=\sum_{X<n\leq 2X}\Lambda(n)\lambda_f(n)e(n\alpha).
\end{align*}

We start by establishing the following proposition over major arcs.

\begin{prop}[Major arcs estimate]\label{Major1} 
Assume that Hypothesis \ref{zerofree1} holds. Let $\varepsilon>0$ and let $X$ be large. Suppose that $X^{2/3+\varepsilon}\ll H\ll X^{1-\varepsilon}$. Then, we have
\[\sup_{\alpha\in\mathfrak{M}} \int_{\mathfrak{M}\: \cap\:  [\alpha-1/2H,\: \alpha +1/2H]} \left|S_{f,\: \Lambda}(\beta)\right|^{2} d\beta \ll_{A,k} \dfrac{X}{(\log X)^{A+2}},\]
where $\mathfrak{M}$ is given by $\eqref{Def: Major arcs Thm 1}$.
\end{prop}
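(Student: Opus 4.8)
The plan is to reduce the problem, via Gallagher-type localization, to a short-interval mean value of $\sum \Lambda(n)\lambda_f(n)e(an/q)$ and then to convert the additive character $e(an/q)$ into Dirichlet characters, exploiting that $\Lambda$ kills the contribution of $n$ sharing a factor with $q$. First I would fix $\alpha\in\mathfrak M$, so $\alpha$ lies in some arc around $a/q$ with $q\le Q$ and $|\alpha-a/q|\le 1/(qR)\le (1+o(1))/(2H)$ since $R=H(\log X)^{-2A}$ is much larger than $q$ in the relevant regime; wait, one must be careful here, because $1/(qR)$ is compared with $1/(2H)$, and $qR$ can be as small as $R=H(\log X)^{-2A}$, which is \emph{smaller} than $H$. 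So the hypothesis of Lemma~\ref{prop1} with $Y=H$ is not literally satisfied for every $\alpha\in\mathfrak M$; instead I would first note that the integration region $\mathfrak M\cap[\alpha-1/(2H),\alpha+1/(2H)]$ meets at most $O(1)$ major arcs (their centers $a/q$ with $q\le Q$ are $\gg 1/Q^2 \gg 1/(2H)$ apart because $Q^2 = X^{2/3}(\log X)^{O(1)} \ll H$), and on each such arc I can apply Lemma~\ref{prop1} directly, or simply extend the integral to $[\alpha-1/H,\alpha+1/H]$ and invoke that lemma with $Y=H$. This bounds the left-hand side by $O\!\left(H^{-2}\int_X^{2X}\big|\sum_{u<n\le u+H}\Lambda(n)\lambda_f(n)e(an/q)\big|^2\,du\right)$.

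Next I would replace $\Lambda(n)\lambda_f(n)$ with $\Lambda_f(n)$ at the cost of $O((\log x)^4)$ (as in the proof of Lemma~\ref{Lemma: Exponential sum L infinity}), and then expand $e(an/q)$ in Dirichlet characters modulo $q$. Writing $n$ as $q_0 m$ where $q_0=(n,q^\infty)$-type factor — more precisely, since $\Lambda_f$ is supported on prime powers, the only $n$ with $(n,q)>1$ are powers of primes dividing $q$, contributing negligibly (at most $O((\log X)^2)$ terms, each $\ll\log X$, summed in $\ell^2$ gives $\ll (\log X)^{O(1)}$ per shift, hence $\ll X(\log X)^{O(1)}$ total, which after division by $H^2$ is acceptable provided $H$ is not too small — here $H\ge X^{2/3+\varepsilon}$ so $X/H^2 \le X^{-1/3-2\varepsilon}$, fine). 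For $(n,q)=1$ we have the orthogonality relation $e(an/q) = \frac{1}{\varphi(q)}\sum_{\chi\bmod q}\big(\sum_{b}\chi(b)e(ab/q)\big)\overline{\chi(n)}$, and the Gauss-sum factors are bounded by $\sqrt q$ in absolute value for primitive $\chi$ and handled standardly for imprimitive ones. After Cauchy–Schwarz over the $\varphi(q)$ characters, the task becomes to bound
\[
\frac{q}{H^2}\sum_{\chi\bmod q}\int_X^{2X}\Big|\sum_{u<n\le u+H}\Lambda_f(n)\chi(n)\Big|^2\,du.
\]
Now I would apply Lemma~\ref{Lemma: major arc for small q} (which is exactly this mean value, and is valid since $q\le Q\le(\log X)^{2A+O(1)}$ and $X^{1/3+\varepsilon}\le H\le X^{1-\varepsilon}$): it gives $\ll X\cdot H^2/(\log X)^B$ for any $B$. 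Thus the whole expression is $\ll q X/(\log X)^B \ll Q X/(\log X)^B$, and choosing $B$ large enough relative to $A$ (and absorbing the $Q\le (\log X)^{O(1)}$, the $\varphi(q)\le q$, and the Gauss-sum and conversion losses, all of which are powers of $\log X$) yields $\ll X/(\log X)^{A+2}$, uniformly in $\alpha\in\mathfrak M$.

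The main obstacle I anticipate is the bookkeeping at the interface between the additive and multiplicative worlds: making sure the passage from $e(an/q)$ to characters, including the non-primitive characters and the diagonal $(n,q)>1$ terms, loses only a bounded power of $\log X$ and does not require $q$ to be prime, and simultaneously checking that the localization step (Lemma~\ref{prop1}) really applies after the $O(1)$-arc reduction — in particular that extending to $[\alpha-1/H,\alpha+1/H]$ does not pick up a second family of major arcs one has failed to account for. A secondary point is verifying that the error $X(\log X)^{O(1)}/H^2$ from the prime-power diagonal and from the $\Lambda\lambda_f\mapsto\Lambda_f$ replacement is genuinely smaller than $X/(\log X)^{A+2}$; this is where the hypothesis $H\ge X^{2/3+\varepsilon}$ is used, and it is exactly the bottleneck that Theorem~\ref{conditionalthm} later circumvents on the minor-arc side rather than here.
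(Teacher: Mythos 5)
Your plan correctly identifies the two tools the paper uses on the major arcs (Gallagher localization plus the additive-to-multiplicative transfer into Lemma~\ref{Lemma: major arc for small q}), and your handling of the $\Lambda\lambda_f\mapsto\Lambda_f$ error and the $(n,q)>1$ diagonal is sound. But there are two genuine gaps, and they are precisely the points where the paper's proof dichotomizes on the size of $q$ while you try to run a single argument for all $q\le Q$.

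First, and most seriously, you assert near the end that ``$q\le Q\le(\log X)^{2A+O(1)}$'' so that Lemma~\ref{Lemma: major arc for small q} applies to every major arc. This contradicts the definition you yourself use earlier in the same paragraph: $Q=X^{1/3}(\log X)^{A/3+O(1)}$, which is vastly larger than $(\log X)^{2A+O(1)}$. Lemma~\ref{Lemma: major arc for small q} carries the hypothesis $q\le(\log X)^{2A+O(1)}$ in an essential way (its proof multiplies by $\varphi(q)$ and needs $q$ polylogarithmic for the zero-density saving to dominate). For the complementary regime $(\log X)^{2A+O(1)}\ll q\le Q$ you supply no tool at all, and a trivial mean-square bound on the $\varphi(q)$-character average gives $\asymp q^2X(\log X)^{O(1)}$, which is $\gg X^{5/3}$ near $q\asymp Q$ — far too large. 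This is exactly where the paper instead invokes the pointwise exponential-sum estimate of Lemma~\ref{Lemma: Exponential sum L infinity}: once $q\gg(\log X)^{2A+O(1)}$, the bound $H/\sqrt{q}$ from that lemma already makes the short sum $\ll H/(\log X)^{A+O(1)}$ uniformly in $u$, so no character expansion is needed at all for those $q$.

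Second, you flag the right worry about Lemma~\ref{prop1} — that for small $q$ the half-width $1/(qR)$ of the arc can exceed $1/(2H)$ — but the proposed fix does not close the gap. Applying Lemma~\ref{prop1} with $Y=H$ still requires $|\alpha-a/q|\le(1+o(1))/(2H)$, and for $q\le(\log X)^{2A+O(1)}$ a point $\alpha$ near the edge of the arc has $|\alpha-a/q|\asymp(\log X)^{2A}/(qH)\gg1/H$, so the hypothesis genuinely fails. Equivalently, if you apply Gallagher centered at $\alpha$ itself, the resulting short sum carries the phase $e(n\alpha)=e(na/q)e(n\xi)$ with $\xi=\alpha-a/q$, and since $|\xi|H$ can be as large as $(\log X)^{2A}$, the extra factor $e(n\xi)$ oscillates over the window and cannot simply be dropped before passing to characters. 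The paper resolves this in its second case by covering $\mathfrak M\cap[\alpha-1/(2H),\alpha+1/(2H)]$ by the full arc around $a/q$ and then applying Gallagher with the \emph{shorter} window $Y=X^{2/3+\varepsilon/2}$ (which is $\ll qR$, so the hypothesis of Lemma~\ref{prop1} holds with $\alpha'=a/q$, and Lemma~\ref{Lemma: major arc for small q} still applies since $X^{2/3+\varepsilon/2}\ge X^{1/3+\varepsilon}$). To salvage your version you would need either this shorter-window choice or an extra integration-by-parts step to remove the $e(n\xi)$ factor; the cost in powers of $\log X$ is controllable, but it is a step that must actually be carried out, not waved away by ``extend to $[\alpha-1/H,\alpha+1/H]$.''
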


\begin{proof}
    If $\alpha\in \mathfrak{M}$, then there exist $(a, q)=1$ with $q\leq Q$ such that
\begin{align*}
    \bigg|\alpha-\dfrac{a}{q}\bigg|\leq \dfrac{1}{qR}.
\end{align*}
Now we consider two cases, depending on whether $q\gg (\log X)^{2A+O(1)}$ or $q\ll (\log X)^{2A+O(1)}$.

We start by handling the first case. Since $q\gg (\log X)^{2A+ O(1)}$ in this case, we have $qR\geq H$.  Therefore, by Lemma \ref{prop1}, we have
\begin{align*}
    \int_{\mathfrak{M}\: \cap\:  [\alpha-1/2H,\: \alpha +1/2H]} \left|S_{f,\Lambda}(\beta)\right|^{2} d\beta \ll \dfrac{1}{H^2}\int_{X}^{2X}\bigg|\sum_{x<n\leq x+H}\Lambda(n)\lambda_f(n)e(na/q)\bigg|^2\: dx.
\end{align*}
So, it is enough to show that
\begin{align*}
    \int_{X}^{2X}\bigg|\sum_{x<n\leq x+H}\Lambda(n)\lambda_f(n)e(na/q)\bigg|^2\: dx\ll_{{A}} \dfrac{H^2X}{(\log X)^{{A+2}}}.
\end{align*}
Since $X^{2/3+\varepsilon}\ll H\leq X^{1-\varepsilon}$ and $(\log X)^{2A+O(1)}\ll q\leq Q$ by Lemma \ref{Lemma: Exponential sum L infinity}, we have
\begin{align*}
    \sum_{x<n\leq x+H}\Lambda(n)\lambda_f(n)e(na/q)\ll_{{A}} \dfrac{H}{(\log X)^{{A+O(1)}}}.
\end{align*}
Hence,
\begin{align*}
    \int_{X}^{2X}\bigg| \sum_{x<n\leq x+H}\Lambda(n)\lambda_f(n)e(na/q)\bigg|^2\: dx\ll_{{A}} \dfrac{H^2X}{(\log X)^{{A+2}}},
\end{align*}
as desired.

We now handle the second case, that is, $q\ll (\log X)^{2A+O(1)}$. In this case, we have 
\[H\gg qR/(\log X)^{O(1)}\gg H(\log X)^{-O(1)}\gg X^{2/3+\varepsilon/2},\]
using the fact that $H\gg X^{2/3+\varepsilon}$. Therefore,
\begin{align*}
    \int_{\mathfrak{M}\: \cap \: [\alpha-1/2H,\: \alpha + 1/2H]}|S_{f, \: \Lambda}(\beta)|^2\: d\beta &\ll \int_{-(\log X)^{O(1)}/(qR)}^{(\log X)^{O(1)}/(qR)}|S_{f, \: \Lambda}(a/q+\beta)|^2\: d\beta\\
    &\ll \int_{-X^{-2/3-{\varepsilon/2}}}^{X^{-2/3-\varepsilon/2}}|S_{f, \: \Lambda}(a/q+\beta)|^2\:d\beta\\
    &\ll \dfrac{1}{X^{4/3+\varepsilon}}\int_{X}^{2X}\bigg|\sum_{x<n\leq x+X^{2/3+{\varepsilon/2}}}\Lambda(n)\lambda_f(n)e(na/q)\bigg|^2\: dx,
\end{align*}
where we have used Gallagher's lemma \cite[Lemma 1.9]{M1971} in the last line. So, it is enough to show that
\begin{align*}
    \int_{X}^{2X}\bigg|\sum_{x<n\leq x+X^{2/3+{\varepsilon/2}}}\Lambda(n)\lambda_f(n)e(na/q)\bigg|^2\: dx \ll_{{A}} \dfrac{X^{4/3+{\varepsilon}}\cdot X}{(\log X)^{{A+2}}}.
\end{align*}
Note that for $x\in [X, 2X]$, we have
\begin{align*}
    \sum_{x<n\leq x+X^{2/3+{\varepsilon/2}}}\Lambda(n)\lambda_f(n)e(na/q)=\sum_{x<n\leq x+X^{2/3+{\varepsilon/2}}}\Lambda_f(n)e(na/q) + O((\log X)^4).
\end{align*}
Then, by Fourier inversion on the group $\mathbb{Z}/q\mathbb{Z}$, we have
\begin{align*}
    e\bigg(\dfrac{an}{q}\bigg)\cdot 1_{(an, q)=1}=\dfrac{1}{\varphi(q)}\sum_{\chi\Mod q}\tau(\overline{\chi})\chi(an),
\end{align*}
where $\tau(\overline{\chi})$ is the Gauss sum. Therefore, we have
\begin{align*}
    \sum_{x<n\leq x+X^{2/3+{\varepsilon/2}}}\Lambda_f(n)e(na/q) =\dfrac{1}{\varphi(q)}\sum_{\chi\Mod q}\tau(\overline{\chi})\chi(a)\sum_{x<n\leq x+X^{2/3+{\varepsilon/2}}}\Lambda_f(n)\chi(n) + O((\log X)^4),
\end{align*}
where we have used the fact that $\Lambda_f$ is supported on prime powers to show that the contributions coming from $(n, q)>1$ is $\ll (\log X)^4$. This implies that
\begin{align*}
    \int_{X}^{2X} & \bigg|\sum_{x<n\leq x+X^{2/3+{\varepsilon/2}}}\Lambda(n)\lambda_f(n)e(na/q)\bigg|^2\: dx\\
    &\ll \dfrac{1}{\varphi(q)^2}\int_X^{2X}\bigg|\sum_{\chi\Mod q}\tau(\overline{\chi})\chi(a)\sum_{x<n\leq x+X^{2/3+{\varepsilon/2}}}\Lambda_f(n)\chi(n)\bigg|^2\: dx + O(X\log^8 X)\\
    &\ll (\log X)\int_{X}^{2X}\sum_{\chi\Mod q}\bigg|\sum_{x<n\leq x+X^{2/3+{\varepsilon/2}}}\Lambda_f(n)\chi(n)\bigg|^2\: dx + O(X\log^8 X),
\end{align*}
by Cauchy-Schwarz's inequality and the fact that $|\tau(\overline{\chi})|\leq \sqrt{q}$. Applying Lemma \ref{Lemma: major arc for small q}, we see that the above quantity is
\begin{align*}
    \ll_{{A}} \dfrac{X^{4/3+{\varepsilon}}\cdot X}{(\log X)^{{A+2}}} + X(\log X)^8\ll_{{A}} \dfrac{X^{4/3+{\varepsilon}}\cdot X}{(\log X)^{{A+2}}},
\end{align*}
as desired. This completes the proof.
\end{proof}

Next, we establish the estimates over the minor arcs.

\begin{prop}[Minor arcs estimate]\label{Minor1} Let $\varepsilon>0$ and let $X$ be large. Suppose that $X^{2/3+\varepsilon}\ll H\ll X^{1-\varepsilon}$. Then, we have
$$\sup_{\alpha\in \mathfrak{m}} \int_{\mathfrak{m} \: \cap\:  [\alpha-1/2H,\: \alpha +1/2H]} \left|S_{f,\Lambda}(\beta)\right|^{2} d\beta \ll_{A,k} \dfrac{\max\{HX(\log X)^{-2A}, X^{5/3}\} (\log X)^{O(1)}}{H},$$
where $\mathfrak{m}$ is given by \eqref{Def: Minor arcs Thm 1}.
\end{prop}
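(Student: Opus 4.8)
The plan is to bound the $L^2$-integral of $S_{f,\Lambda}(\beta)$ over a short minor-arc window by combining a pointwise bound on $|S_{f,\Lambda}|$ with a trivial mean-value (large-sieve type) estimate. First I would fix $\alpha \in \mathfrak{m}$ and seek a rational approximation $a/q$ to $\alpha$ with $(a,q)=1$ via Dirichlet's theorem, say $|\alpha - a/q| \le 1/(qQ')$ for a suitable denominator bound $Q' \asymp H$; since $\alpha$ lies in the minor arcs, the corresponding $q$ must be large, namely $q > Q = X^{1/3}(\log X)^{A/3 + O(1)}$ (otherwise $\alpha$ would fall in $\mathfrak{M}$, noting $R = H(\log X)^{-2A}$ and $H \gg X^{2/3+\varepsilon}$ so that $qR$ stays in the admissible window). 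I should also note $q \le Q' \asymp H \le X^{1-\varepsilon}$, so $q$ sits in a clean dyadic range $X^{1/3}(\log X)^{O(1)} \ll q \ll H$.

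Next I would invoke the Vinogradov-type pointwise estimate, Lemma~\ref{Lemma: Exponential sum L infinity}, applied to the full sum over $(X, 2X]$ (so $x = X$, and the length parameter there equals $X$, which satisfies $X^{2/3}(\log X)^{O(1)} \le X \le X$ trivially); this gives, for any $\beta$ close to $a/q$,
\[
|S_{f,\Lambda}(\beta)| \ll_k \Bigl( \frac{X}{\sqrt{q}} + X^{1/2} q^{1/2} + X^{1/2} X^{1/3} + X^{2/3} \Bigr)(\log X)^{O(1)} \ll_k \Bigl( \frac{X}{\sqrt{q}} + X^{1/2}q^{1/2} + X^{5/6}\Bigr)(\log X)^{O(1)}.
\]
Using $q \gg X^{1/3}(\log X)^{A/3+O(1)}$ in the first term and $q \ll H \le X^{1-\varepsilon}$ in the second, the dominant contribution is $X/\sqrt{q} \ll X \cdot X^{-1/6}(\log X)^{-A/6} (\log X)^{O(1)}$ and $X^{1/2}q^{1/2} \ll X^{1/2} H^{1/2}$; in any case one extracts a bound of shape $|S_{f,\Lambda}(\beta)| \ll (\max\{X^{5/6}, X^{1/2}H^{1/2}, XH^{-1/6}\})(\log X)^{O(1)}$ valid uniformly for $\beta$ in the window $[\alpha - 1/(2H), \alpha + 1/(2H)]$ (the window has length $1/H \le 1/q^2$-ish only if $q \ll H^{1/2}$, so one must either re-approximate each $\beta$ or, more simply, note every $\beta$ in the window still has a large-denominator approximation).

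Then I would pull out one factor of $\|S_{f,\Lambda}\|_\infty$ on the window and bound the remaining $\int |S_{f,\Lambda}(\beta)|\,d\beta$ or rather apply Cauchy--Schwarz together with the full-circle $L^2$ identity: $\int_0^1 |S_{f,\Lambda}(\beta)|^2 d\beta = \sum_{X<n\le 2X} \Lambda(n)^2 \lambda_f(n)^2 \ll X(\log X)^{O(1)}$ by Deligne's bound $|\lambda_f(n)| \le d_2(n)$ and standard mean values of $d_2^2 \Lambda^2$. Combining,
\[
\int_{\mathfrak{m}\cap[\alpha-1/2H,\alpha+1/2H]} |S_{f,\Lambda}(\beta)|^2 d\beta \le \Bigl(\sup_{\beta \in \mathfrak m \cap \text{window}} |S_{f,\Lambda}(\beta)|^2\Bigr) \cdot \frac{1}{H} + \ldots
\]
Wait — more carefully, I would bound $\int |S_{f,\Lambda}|^2 \le \|S_{f,\Lambda}\|_{L^2(0,1)}^2 \cdot (\text{measure})$ is wrong; instead $\int_{\text{window}} |S_{f,\Lambda}|^2 \le \|S_{f,\Lambda}\|_\infty^2 \cdot |\text{window}| \ll \|S_{f,\Lambda}\|_\infty^2 / H$. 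Plugging the pointwise bound squared over $H$ yields, after checking cases, $\ll (X^{5/3}(\log X)^{O(1)} + HX(\log X)^{-O(A)} + X^2 H^{-4/3})/H \cdot (\log X)^{O(1)}$, and since $H \le X^{1-\varepsilon}$ forces $X^2 H^{-4/3} \ll X^{5/3} H^{-1/3} \cdot H^{?}$... one checks $X^2/H^{4/3} \le X^{5/3}$ is equivalent to $H \ge X^{1/4}$, true here, so that term is absorbed; thus the bound is $\ll_{A,k} \max\{HX(\log X)^{-2A}, X^{5/3}\}(\log X)^{O(1)} / H$ as claimed.

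The main obstacle I anticipate is the bookkeeping around the Dirichlet approximation: the window $[\alpha - 1/(2H), \alpha+1/(2H)]$ has length comparable to $1/H$, but Lemma~\ref{Lemma: Exponential sum L infinity} requires $|\beta - a/q| \le 1/q^2$, and since $q$ may be as large as $H$, we could have $1/q^2 \ll 1/H$, so a single approximation $a/q$ need not serve all $\beta$ in the window. The fix is to observe that one can instead use a weaker but window-wide mechanism: either (i) re-run the Vinogradov argument uniformly — every $\beta$ in the window is itself a minor-arc point with its own large denominator, giving the same bound up to $(\log X)^{O(1)}$ — or (ii) appeal directly to Lemma~\ref{prop1} to convert the window integral into $\frac{1}{H^2}\int_X^{2X} |\sum_{x<n\le x+H}\Lambda(n)\lambda_f(n)e(na/q)|^2 dx$ and bound the inner sum pointwise by Lemma~\ref{Lemma: Exponential sum L infinity} (with length parameter $H \ge X^{2/3}(\log X)^{O(1)}$, which is exactly the hypothesis's requirement), then integrate trivially. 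Approach (ii) is cleanest and mirrors the structure already used in Proposition~\ref{Major1}, so that is the route I would take; the rest is routine estimation of the resulting expression $\frac{1}{H^2} \cdot X \cdot (\max\{H/\sqrt q, X^{1/2}q^{1/2}, H^{1/2}X^{1/3}, X^{2/3}\})^2(\log X)^{O(1)}$ using $q > Q$.
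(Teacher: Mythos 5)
Your option (i) is exactly the paper's argument: apply Lemma~\ref{Lemma: Exponential sum L infinity} to the \emph{full} sum $S_{f,\Lambda}(\beta)$ (length $X$) for each $\beta\in\mathfrak m$ separately, obtaining the uniform pointwise bound $|S_{f,\Lambda}(\beta)|\ll_{A,k}\max\{R^{1/2}X^{1/2},X^{5/6}\}(\log X)^{O(1)}$ (since every $\beta\in\mathfrak m$ has a Dirichlet approximation with $Q<q\le R$), and then integrate trivially over the window of length $1/H$. That yields precisely $\max\{RX,X^{5/3}\}(\log X)^{O(1)}/H$, which is the stated bound.

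Your option (ii), however, does not work, and it is the one you chose. After Gallagher's lemma and the short-sum Vinogradov estimate (length $H$), the relevant quantity is
\[
\frac{X}{H^{2}}\Bigl(\frac{H}{\sqrt q}+X^{1/2}q^{1/2}+H^{1/2}X^{1/3}+X^{2/3}\Bigr)^{2}(\log X)^{O(1)} .
\]
The bilinear term $X^{1/2}q^{1/2}$ squared contributes $X\cdot Xq/H^{2}=X^{2}q/H^{2}$. With $q$ as large as $R=H(\log X)^{-2A}$ (which is allowed under Dirichlet approximation and cannot be avoided), this is $X^{2}(\log X)^{-2A}/H$, which exceeds the target $\max\{X(\log X)^{-2A},X^{5/3}/H\}(\log X)^{O(1)}$ by a factor of roughly $X/H\gg X^{\varepsilon}$. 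The point is that the bilinear term $x^{1/2}q^{1/2}$ in Lemma~\ref{Lemma: Exponential sum L infinity} does not shrink as the sum gets shorter, so the short-sum estimate loses relative to its length $H$; equivalently, for $q$ in the range $H^{2}/X\ll q\le R$ the short-sum estimate is no better than the trivial bound $H(\log X)^{O(1)}$, and then $\frac{X}{H^{2}}\cdot H^{2}=X$, with no logarithmic saving at all. This is why Gallagher's lemma is used in Proposition~\ref{Major1} (where $q$ is small, so $X^{1/2}q^{1/2}$ is harmless) but \emph{not} in the minor-arc proposition. You also have a couple of arithmetic slips in the intermediate pointwise bound (the term $XH^{-1/6}$ should read $X^{5/6}(\log X)^{-A/6}$, coming from $X/\sqrt q$ with $q>Q$, and the exponent in ``$X^{2}H^{-4/3}$'' is a consequence of that slip), though these are subsumed once you redo option (i) carefully. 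The fix is simply to take route (i) rather than route (ii).
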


\begin{proof}
   Recall that $\mathfrak{m}$ is given by \eqref{Def: Minor arcs Thm 1}. The application of Dirichlet's approximation theorem implies that if $\theta \in \mathfrak m,$ then 
there exist integers $a$ and $q$ such that $(a,q)=1, Q \leq q \leq R$ and
$$\left|\theta - \frac{a}{q}\right| \leq \frac{1}{qR}.$$
Since $Q=X^{1/3}(\log X)^{A/3+O(1)}$, if $\theta\in \mathfrak{m}$, by applying Lemma \ref{Lemma: Exponential sum L infinity}, we have 
\begin{align*}
        S_{f,\: \Lambda}(\theta)=\sum_{X<n\leq 2X}\Lambda(n)\lambda_f(n)e(n\theta)\ll_{A,k} \max\{R^{1/2}X^{1/2}, X^{5/6}\} (\log X)^{O(1)}.
    \end{align*}
Applying  the above uniform bound, we immediately infer that
\begin{align*}
\sup_{\alpha\in \mathfrak{m}} \int_{\mathfrak{m}\cap [\alpha-1/2H, \alpha +1/2H]} \left|S_{f,\Lambda}(\beta)\right|^{2} d\beta  & \ll_{A,k} \max\{RX, X^{5/3}\} (\log X)^{O(1)}\sup_{\alpha\in \mathfrak{m}}\int_{\alpha-1/2H}^{\alpha+1/2H}d\beta\\
&\ll_{A,k} \dfrac{\max\{HX(\log X)^{-2A}, X^{5/3}\}(\log X)^{O(1)}}{H},    
\end{align*}
using the fact that $R=H/(\log X)^{2A}$. This completes the proof.
\end{proof}

We are now ready to prove Theorem \ref{unconditionalthm}.
\begin{proof}[Proof of Theorem \ref{unconditionalthm}] 
Using the orthogonality relation
\[
\int_{0}^{1} e(n\alpha) \, d\alpha =
\begin{cases}
1 & \text{if } n = 0, \\
0 & \text{otherwise},
\end{cases}
\]
we have
$$V_{f}(X; H)=\sum_{1\leq |h|\leq H}\bigg|\sum_{\substack{X<n, m\leq 2X\\ n-m=h}}\Lambda(n)\lambda_f(n)\Lambda(m)\lambda_f(m)\bigg|^2=\sum_{1\leq |h| \leq H} \left|\int_0^1 \left|S_{f,\: \Lambda}(\alpha)\right|^{2} e(-\alpha h)\: d\alpha \right|^{2}.$$
Now we apply smoothing on the sum over $h$ using \cite[Proposition 3.1]{MRT2019i}. To be precise,
let $\Phi\colon  \mathbb{R} \rightarrow \mathbb{R}^{+}$ be an even non-negative Schwartz function such that $\Phi(x) \geq 1$ for all $x \in[-1,1]$, and the Fourier transform \[\widehat{\Phi}(\xi):=\int_{-\infty}^\infty \Phi(x) e(-x \xi)\: dx\]
is supported on $[-1 / 2,1 / 2]$. 
Then 
\begin{align}\notag 
   \sum_{\left|h\right| \leq H} &\left| \int_{0}^1 \left|S_{f,\Lambda}(\alpha)\right|^{2} e( -\alpha h) d\alpha \right|^{2}  \\
  \notag  & \ll \sum_{h\in \mathbb{Z}}\left|\int_{0}^1 |S_{f,\Lambda}(\alpha)|^{2} e(\alpha h) d \alpha\right|^2 \Phi\left(\frac{h}{H}\right)\\
\label{Eq: Theorem unconditional split}   &\ll \sum_{h\in \mathbb{Z}}\left|\int_{\mathfrak{M}} |S_{f,\Lambda}(\alpha)|^{2} e(-\alpha h) d \alpha\right|^2 \Phi\left(\frac{h}{H}\right) + \sum_{h\in \mathbb{Z}}\left|\int_{\mathfrak{m}}|S_{f,\Lambda}(\alpha)|^{2} e(-\alpha h) d \alpha\right|^2 \Phi\left(\frac{h}{H}\right),
   \end{align} 
   where $\mathfrak{M}$ and $\mathfrak{m}$ are given by \eqref{Def: Major arcs Thm 1} and \eqref{Def: Minor arcs Thm 1}, respectively. Next, for $\mathcal{S}\in \{\mathfrak{M}, \mathfrak{m}\}$, we open the square and apply the Poisson summation formula, so that
\begin{equation}\begin{split}\notag 
\sum_{h\in \mathbb{Z}} &\left|\int_{\mathcal{S}}|S_{f,\Lambda}(-\alpha)|^{2} e(\alpha h) d \alpha\right|^2 \Phi\left(\frac{h}{H}\right)\\
&\ll \int_{\mathcal{S}}\left|S_{f,\Lambda}(\alpha)\right|^{2} \int_{\mathcal{S}}\left|S_{f,\Lambda}(\beta)\right|^{2}  \left|\sum_{h\in \mathbb{Z}} e((\alpha-\beta) h) \Phi\left(\frac{h}{H}\right) \right| d \beta\:  d \alpha 
\\
& \ll H \int_{\mathcal{S}}\left|S_{f,\Lambda}(\alpha)\right|^{2} \int_{\mathcal{S}}\left|S_{f,\Lambda}(\beta)\right|^{2} \left|\sum_{k\in \mathbb{Z}} \hat{\Phi}(H(\alpha-\beta+k))\right|d\beta \:  d\alpha.
\end{split}\end{equation}
Observe that by our assumption, $\hat{\Phi}(H(\alpha-\beta+k))$ vanishes unless $\beta \in [\alpha-1/2H, \alpha +1/2H].$ Combining the above estimate together with the Parseval identity, we deduce that
$$\sum_{|h| \leq H} \left|\int_{\mathcal{S}} \left|S_{f,\Lambda}(\alpha)\right|^{2} e(-\alpha h) \mathrm{d} \alpha \right|^{2}
\ll H\|\lambda_f\Lambda
\|_{\ell^{2}}^2 \sup_{\alpha\in \mathcal{S}} \int_{\mathcal{S}\: \cap\: [\alpha-1/2H, \alpha +1/2H]} |S_{f,\Lambda}(\beta)|^{2}d\beta,$$
where we have used the fact that $\mathcal{S}\subseteq [0,1]$ in the last line to apply the Parseval identity. Note that since ${|\lambda_f(n)\Lambda(n)| \ll \log n}$, we have
\begin{align*}
    \|\lambda_f \Lambda\|_{\ell^2}^2=\sum_{X<n\leq 2X}\lambda_f(n)^2\Lambda(n)^2\ll X (\log X)^2.
\end{align*}
So, we have
\begin{align}\label{Eq: Theorem unconditional final}
    \sum_{|h| \leq H} \left|\int_{\mathcal{S}} \left|S_{f,\: \Lambda}(\alpha)\right|^{2} e(-\alpha h) \mathrm{d} \alpha \right|^{2}\ll HX(\log X)^2\sup_{\alpha\in \mathcal{S}} \int_{\mathcal{S}\cap [\alpha-1/2H, \alpha +1/2H]} |S_{f,\Lambda}(\beta)|^{2}d\beta.
\end{align}
Finally, we combine \eqref{Eq: Theorem unconditional split}, \eqref{Eq: Theorem unconditional final} and then apply Propositions \ref{Major1} and \ref{Minor1} together with the fact that  $X^{2/3+\varepsilon}\leq H\leq X^{1-\varepsilon}$ to conclude that
\begin{align*}
    V_{f}(X; H)\ll_{A,k} \dfrac{HX^2}{(\log X)^A}.
\end{align*}
This completes the proof.
\end{proof}

\section{Proof of Theorem \ref{conditionalthm}} \label{sec: Proof of theorem conditional}
In this section, we will establish Theorem \ref{conditionalthm}. Let $\varepsilon>0$ and $A$ be a sufficiently large constant. For the rest of this section, we set  
\[
X^{{1/3+\varepsilon}} \ll H \ll X^{1-\varepsilon} \quad \text{and} \quad 
Q = (\log X)^{2A+O(1)}.
\]
Define the major arcs by 
\begin{align}\label{Def: major arcs thm2}
\mathfrak{M}^\prime
:= \bigcup_{q\le Q}\;\bigcup_{\substack{a=1\\(a,q)=1}}^{q}
\Bigl(\frac{a}{q} - \frac{(\log X)^{{3A}}}{H},\;
      \frac{a}{q} + \frac{(\log X)^{{3A}}}{H}\Bigr)
      \end{align}
and set the minor arcs as 
\begin{align}\label{Def: minor arcs thm2}
\mathfrak m^\prime := [0,1]\setminus\mathfrak M^\prime.
\end{align}

As in the proof of Theorem \ref{unconditionalthm} (see, for example, the relation \eqref{Eq: Theorem unconditional final}), it is enough to estimate the integrals over the major arcs and minor arcs. We begin with the major arcs.

\begin{prop}[Major arcs estimate] \label{Prop: Major thm2}
Assume that Hypothesis \ref{zerofree1} holds. Let $\varepsilon>0$ and let $X$ be large. Suppose that $X^{{1/3}+\varepsilon}\ll H\ll X^{1-\varepsilon}$. Then, we have 
\[\sup_{\alpha\in \mathfrak{M}^\prime} \int_{\mathfrak{M}^\prime\: \cap\:  [\alpha-1/2H,\:  \alpha +1/2H]} \left|S_{f,\: \Lambda}(\beta)\right|^{2} d\beta \ll_{A,k} \dfrac{X}{(\log X)^{{A+2}}},\]
where $\mathfrak{M}^\prime$ is given by \eqref{Def: major arcs thm2}.
\end{prop}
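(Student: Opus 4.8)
The plan is to mimic the ``small modulus'' half of the proof of Proposition~\ref{Major1}, the only real change being that the window fed into Gallagher's lemma must now be taken proportional to $H$ rather than to a fixed power of $X$, and to verify that this window is still long enough for Lemma~\ref{Lemma: major arc for small q} to apply. First I would localise: fix $\alpha\in\mathfrak{M}'$ and choose $(a,q)=1$ with $q\le Q=(\log X)^{2A+O(1)}$ and $|\alpha-a/q|\le(\log X)^{3A}/H$. Two distinct fractions with denominators at most $Q$ differ by at least $1/Q^{2}=(\log X)^{-4A-O(1)}$, while each arc making up $\mathfrak{M}'$ has radius $(\log X)^{3A}/H\le(\log X)^{3A}X^{-1/3-\varepsilon}$; since $X^{-1/3-\varepsilon}(\log X)^{3A}$ decays faster than any power of $\log X$, for $X$ large these arcs stay pairwise disjoint even after each is enlarged by $1/(2H)$. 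Hence $\mathfrak{M}'\cap[\alpha-1/2H,\alpha+1/2H]$ lies inside the single arc about $a/q$, and so inside $[\alpha-3(\log X)^{3A}/H,\,\alpha+3(\log X)^{3A}/H]$.

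Next I would apply Gallagher's lemma (Lemma~\ref{prop1}) with window $Y:=H/(3(\log X)^{3A})$, which satisfies $X^{1/3+\varepsilon/2}\le Y\le X^{1-\varepsilon}$, to obtain
\[
\int_{\mathfrak{M}'\cap[\alpha-1/2H,\alpha+1/2H]}\bigl|S_{f,\Lambda}(\beta)\bigr|^{2}\,d\beta
\ll\frac{1}{Y^{2}}\int_{X}^{2X}\Bigl|\sum_{u<n\le u+Y}\Lambda(n)\lambda_f(n)e(na/q)\Bigr|^{2}\,du.
\]
As in Proposition~\ref{Major1}, I would then replace $\Lambda(n)\lambda_f(n)$ by $\Lambda_f(n)$ in the inner sum --- the two agree on primes and differ only on prime powers $p^{k}$ with $k\ge 2$ and on $n$ with $(n,q)>1$, whose contribution, since $q\le(\log X)^{O(1)}$ and $\Lambda_f$ is supported on prime powers, is $\ll X^{1/3}(\log X)^{O(1)}$ after squaring, integrating over $[X,2X]$, and dividing by $Y^{2}$ (using $Y\ge X^{1/3+\varepsilon/2}$), hence negligible --- expand $e(na/q)\,1_{(n,q)=1}=\varphi(q)^{-1}\sum_{\chi\Mod q}\tau(\overline{\chi})\chi(an)$, and apply Cauchy--Schwarz with $|\tau(\overline{\chi})|\le\sqrt{q}$ to reduce to bounding $\int_{X}^{2X}\sum_{\chi\Mod q}\bigl|\sum_{u<n\le u+Y}\Lambda_f(n)\chi(n)\bigr|^{2}\,du$.

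This last integral is $\ll Y^{2}X/(\log X)^{B}$ for any $B>0$ by Lemma~\ref{Lemma: major arc for small q}, whose hypotheses are met because $q\le Q=(\log X)^{2A+O(1)}$ and $X^{1/3+\varepsilon/2}\le Y\le X^{1-\varepsilon}$. Since the factor $Y^{-2}$ from Gallagher's lemma cancels the $Y^{2}$ (and Cauchy--Schwarz costs only $(\log X)^{O(1)}$), the main term is $\ll X/(\log X)^{B-O(1)}$; choosing $B=B(A)$ large enough then gives $\ll_{A,k}X/(\log X)^{A+2}$, uniformly in $\alpha\in\mathfrak{M}'$, which is the assertion.

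The one genuine difficulty --- cancellation in short character sums of $\Lambda_f$ --- has already been packaged into Lemma~\ref{Lemma: major arc for small q}, hence ultimately into Hypothesis~\ref{zerofree1} and the zero-density estimate of Lemma~\ref{density}; within this Proposition the single point requiring care is the one flagged above, namely that the Gallagher window $Y\asymp H/(\log X)^{3A}$ must exceed $X^{1/3}$ by a fixed power of $X$, which is exactly where the hypothesis $H\ge X^{1/3+\varepsilon}$ enters. Note, finally, that --- unlike in Proposition~\ref{Major1} --- there is no ``large modulus'' sub-case to handle here, because $Q$ is already at most a power of $\log X$; the entire contribution of moduli $q>Q$ has been relegated to the minor arcs $\mathfrak{m}'$.
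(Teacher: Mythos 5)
Your argument is correct and follows essentially the same route as the paper: localise to a single arc about $a/q$ with $q\le(\log X)^{2A+O(1)}$, feed a window $Y\asymp H(\log X)^{-3A}$ into Gallagher's lemma (the paper takes $Y=H(\log X)^{-3A}$, you take $Y=H/(3(\log X)^{3A})$, which more carefully meets the hypothesis $|\alpha-a/q|\le(1+o(1))/(2Y)$), then replace $\Lambda\lambda_f$ by $\Lambda_f$, expand in Dirichlet characters via Gauss sums, apply Cauchy--Schwarz, and invoke Lemma~\ref{Lemma: major arc for small q}, whose hypotheses are satisfied precisely because $H\ge X^{1/3+\varepsilon}$ makes $Y\ge X^{1/3+\varepsilon/2}$. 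The paper simply cites the small-$q$ sub-case of Proposition~\ref{Major1} and Lemma~\ref{Lemma: major arc for small q} without spelling out the steps you make explicit, but the content is the same.
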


\begin{proof}
The proof is similar to the proof of Proposition \ref{Major1}. We will only highlight the necessary changes. Indeed, if $\alpha\in \mathfrak{M}^\prime$, then there exist $(a, q)=1$ with $q\leq (\log X)^{2A+O(1)}$ such that
\begin{align*}
    \bigg|\alpha-\dfrac{a}{q}\bigg|\leq \dfrac{(\log X)^{3A}}{H}.
\end{align*}
Therefore, by Lemma \ref{prop1}, we have
\begin{align*}
    \int_{\mathfrak{M}\: \cap\:  [\alpha-1/2H,\: \alpha +1/2H]} \left|S_{f,\Lambda}(\beta)\right|^{2} d\beta \ll \dfrac{(\log X)^{{6A}}}{H^2}\int_{X}^{2X}\bigg|\sum_{x<n\leq x+H(\log X)^{-{3A}}}\Lambda(n)\lambda_f(n)e(na/q)\bigg|^2\: dx.
\end{align*}
Arguing similarly as in the case of $q\ll (\log X)^{2A + O(1)}$ in the proof of Proposition \ref{Major1} together with the application of Lemma \ref{Lemma: major arc for small q} completes the proof.
\end{proof}

Next, we have the following minor arcs estimate.

\begin{prop}[Minor arcs estimate]\label{Prop: Minor thm2}
Assume that Hypothesis \ref{Hypothesis} holds. Let $\varepsilon>0$ and let $X$ be large. Suppose that $X^{{1/3}+\varepsilon}\leq H\leq X^{1-\varepsilon}$. Then, we have
$$\sup_{\alpha\in \mathfrak{m}^\prime} \int_{\mathfrak {m}^\prime\: \cap\:  [\alpha-1/2H, \: \alpha +1/2H]} \left|S_{f,\: \Lambda}(\beta)\right|^{2} d\beta \ll_{A,k} \dfrac{X}{(\log X)^{{A+2}}},$$
where $\mathfrak{m}^\prime$ is given by $\eqref{Def: minor arcs thm2}$.
\end{prop}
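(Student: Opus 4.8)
The plan is to follow the treatment of the minor arcs in Matom\"aki--Radziwi\l{}\l{}--Tao \cite{MRT2019i}. Fix $\alpha\in\mathfrak m^\prime$; since the integrand is nonnegative it suffices to bound $\int_{\alpha-1/2H}^{\alpha+1/2H}|S_{f,\:\Lambda}(\beta)|^2\,d\beta$. First I would apply Dirichlet's approximation theorem with denominator $2H(\log X)^{-3A}$ to get coprime $a,q$ with $Q<q\le 2H(\log X)^{-3A}$ and $|\alpha-a/q|\le (\log X)^{3A}/(2qH)$, the lower bound $q>Q$ holding precisely because $\alpha\notin\mathfrak M^\prime$. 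Setting $Y:=\min\{qH(\log X)^{-3A},\,H\}$, one checks $X^{1/3+\varepsilon/2}\le Y\le H$ for $X$ large and $|\alpha-a/q|\le 1/(2Y)$, so Gallagher's lemma (Lemma \ref{prop1}, with centre $\alpha$) gives
\[
\int_{\alpha-1/2H}^{\alpha+1/2H}|S_{f,\:\Lambda}(\beta)|^2\,d\beta\ \ll\ \frac{1}{Y^2}\int_{X}^{2X}\Bigl|\sum_{u<n\le u+Y}\Lambda(n)\lambda_f(n)e\!\left(\tfrac{na}{q}\right)\Bigr|^2\,du.
\]
Replacing $\Lambda(n)\lambda_f(n)$ by $\Lambda_f(n)$ (the difference is supported on higher prime powers, costing $O((\log X)^{O(1)})$) and using $Y\ge X^{1/3+\varepsilon/2}$, I would invoke Lemma \ref{Lemma: Heath Brown} (with parameter $Y$, and below with an appropriate $q_0\mid q^{\infty}$) to write $\Lambda_f\cdot1_{(X,2X]}$ as a sum of $O((\log X)^{O(1)})$ pieces $\widetilde{\Lambda_f}$ of Type $d_1,d_2$, Type II, or small-sum type; it then suffices, for each piece, to save an arbitrary fixed power of $\log X$ over the trivial bound $Y^2X$ for $\int_X^{2X}\bigl|\sum_{u<n\le u+Y}\widetilde{\Lambda_f}(n)e(na/q)\bigr|^2\,du$.

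For each piece I would open $e(na/q)$ into Dirichlet characters modulo $q$ (using that $\widetilde{\Lambda_f}$ is, up to an $O((\log X)^{O(1)})$ error, supported on integers coprime to $q$), pass to the primitive characters $\chi^{*}$ inducing them, and apply Cauchy--Schwarz over the pairs $(d,\chi^{*})$ with $d\mid q$, using $\sum_{d\mid q}\sum^{*}_{\chi^{*}\bmod d}|\tau(\overline{\chi^{*}})|^2\ll q^2$ and $q^2/\varphi(q)^2\ll(\log\log X)^{O(1)}$; this reduces matters to bounding $\sum_{d\mid q}\int_X^{2X}\sum^{*}_{\chi^{*}\bmod d}\bigl|\sum_{u<n\le u+Y}\widetilde{\Lambda_f}(n)\chi^{*}(n)\bigr|^2\,du$. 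The standard short-interval reduction (Perron's formula together with Parseval, cf.\ \cite[\S5]{MRT2019i}) then bounds the inner average by $Y^2(\log X)^{O(1)}\int_{|t|\le X/Y}\sum^{*}_{\chi^{*}\bmod d}|\widetilde F_{\chi^{*}}(1/2+it)|^2\,dt$, where $\widetilde F_{\chi^{*}}(s)=\sum_n\widetilde{\Lambda_f}(n)\chi^{*}(n)n^{-s}$ factors as a product of shorter Dirichlet polynomials according to the type of the piece. Writing $T:=X/Y$, the task becomes to estimate these critical-line mean values, and I would split according to the size of $q$.

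If $q\le(\log X)^{C}$ for a suitably large $C=C(A)$, then $qT\ll X^{2/3}(\log X)^{O_A(1)}$. For a Type $d_1,d_2$ piece, $\widetilde F_{\chi^{*}}=M_{\chi^{*}}G_{1,\chi^{*}}G_{2,\chi^{*}}$ with $M$ supported on $[N,2N]$, $N\ll X^{\varepsilon^2}$, so $\sup_t|M_{\chi^{*}}(1/2+it)|\ll X^{\varepsilon^2/2}(\log X)^{O(1)}$; extracting this factor and applying Cauchy--Schwarz in $t$ and $\chi^{*}$ reduces matters to the fourth moments of the $\lambda_f$-twisted polynomials $G_{1,\chi^{*}},G_{2,\chi^{*}}$ (and their $\log$-weighted variants), which by Hypothesis \ref{Hypothesis} and the Remark following it are each $\ll dT(\log X)^{O(1)}$; summing over $d\mid q$ (which costs only $\sigma(q)/q\ll\log\log X$) yields a total $\ll X^{\varepsilon^2}qT(\log X)^{O(1)}$, which has the required power-of-$\log X$ saving. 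For a Type II piece $\widetilde F_{\chi^{*}}=P_{\chi^{*}}Q_{\chi^{*}}$, with $P$ of length $N$, $Q$ of length $X/N$ and $X^{\varepsilon^2}\le N\le YX^{-\varepsilon^2}$, I would run the large-values/Halász machinery of \cite{MRT2019i} — the hybrid large sieve of Lemma \ref{Lemma: mean value} together with a Huxley--Montgomery-type estimate — the length restriction on $N$ being exactly what renders the diagonal term admissible; for a small-sum piece, the bound $\|\widetilde{\Lambda_f}\|_{\ell^2}^2\ll X^{1-\varepsilon^2/8}$ and Lemma \ref{Lemma: mean value} suffice at once. If instead $(\log X)^{C}<q\le 2H(\log X)^{-3A}$, then $Y=H$ and I would argue entirely through bilinear exponential sum estimates \`a la Vinogradov: writing a Type $d_1,d_2$ piece as a two-fold bilinear form (grouping the short factor with one of the $\lambda_f$-factors so that both factors have length in $[X^{\delta},X^{1-\delta}]$) and estimating $\sum_{u<m\ell\le u+H}\alpha(m)\beta(\ell)e(m\ell a/q)$ on average over $u\sim X$, the size $q>(\log X)^{C}$ makes the $H/\sqrt{q^{\prime}}$-type savings decisive, uniformly over the factorisations, and here nothing beyond Lemma \ref{Lemma: mean value} is needed.

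The hard part will be the bilinear/Type II estimates in the large-$q$ regime: one must establish a power saving for $\sum_{u<m\ell\le u+H}\alpha(m)\beta(\ell)e(m\ell a/q)$, averaged over $u\sim X$, uniformly over all moduli $q$ up to roughly $H$ and over all admissible factorisations produced by Lemma \ref{Lemma: Heath Brown} — this is the point at which the hypothesis $H\ge X^{1/3+\varepsilon}$ becomes sharp, and it is handled by the same large-sieve inputs as in the unconditional minor-arc analysis of \cite{MRT2019i}. The role of Hypothesis \ref{Hypothesis} is confined to the Type $d_1,d_2$ pieces with bounded $q$, where it upgrades the fourth-moment bound coming from Lemma \ref{Lemma: mean value}; granting it, the $d_1,d_2$/II dichotomy of Lemma \ref{Lemma: Heath Brown} places every piece into one of the two tractable cases, and summing over the $O((\log X)^{O(1)})$ pieces and over $\alpha\in\mathfrak m^\prime$ completes the proof.
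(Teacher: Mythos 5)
Your broad plan — Gallagher's lemma, orthogonality of characters, Heath--Brown decomposition, and closing the Dirichlet-polynomial estimates via Lemma \ref{Lemma: mean value} and Hypothesis \ref{Hypothesis} — is in the right spirit, but the reduction you perform at the key step is not the one the paper uses, and this leaves a genuine gap in the Type~II case. The paper never lets $q$ grow: it applies Dirichlet approximation with the small denominator cutoff $Q=(\log X)^{2A+O(1)}$, so every $\alpha\in\mathfrak m'$ lies near a rational $a/q$ with $q\le Q$ \emph{always polylogarithmic}, but at a distance $\xi:=\alpha-a/q$ with $(\log X)^{3A}/H<|\xi|\le 1/(qQ)$. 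The crucial tool is then \cite[Corollary 5.3]{MRT2019i}, which converts $\int_{\xi-1/(2H)}^{\xi+1/(2H)}\bigl|\sum_{n}\Lambda_f(n)e(na/q+n\theta)\bigr|^2\,d\theta$ into a \emph{hybrid} mean value: an outer $t$-integral over $[\xi X/\sqrt Q,\,\xi X\sqrt Q]$ of the square of an \emph{inner} $u$-integral of length $\sim|\xi|H$ of the Dirichlet polynomial. This two-scale structure is what makes the Type~II case close with nothing beyond Lemma \ref{Lemma: mean value}: applying the mean value theorem to the short factor (length $N$) on the inner range of length $|\xi|H$ gives a diagonal $\asymp q_1|\xi|H+N$, while applying it to the long factor on the outer range gives $\asymp q_1|\xi|X\sqrt Q+M$; dividing by the target $q_1\xi^2H^2X$ leaves the four quantities $q_1|\xi|\sqrt Q$, $\sqrt Q N/H$, $1/N$ and $1/(q_1|\xi|H)$, each $\ll(\log X)^{-A+O(1)}$ using $X^{\varepsilon^2}\ll N\ll HX^{-\varepsilon^2}$ and $|\xi|H>(\log X)^{3A}$.

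Your reduction collapses this hybrid structure into a single $t$-integral over $|t|\le X/Y$. With that simpler reduction, the Type~II estimate does not close: writing $\widetilde F_\chi=P_\chi Q_\chi$ with $P$ of length $N$ and $Q$ of length $M\asymp X/N$, both the supremum trick $\sup|P|^2\cdot\int\sum^*|Q|^2$ and Cauchy--Schwarz against fourth moments produce a diagonal term $\asymp NM\asymp X$, exactly the trivial bound, with no power-of-$\log X$ saving. You gesture at "Huxley--Montgomery-type" and "large-values/Halász" inputs, but the paper neither states nor needs such tools; the saving comes entirely from the inner/outer interplay plus the minor-arc condition $|\xi|H>(\log X)^{3A}$, which the single-integral reduction discards. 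A second, related gap: your large denominator cutoff $\sim H(\log X)^{-3A}$ forces you into a large-$q$ regime ($q$ up to $\sim H$) which you propose to handle by "bilinear exponential sum estimates \`a la Vinogradov", but Lemma \ref{Lemma: Exponential sum L infinity} carries an irreducible $x^{2/3}$ term that is fatal for $H<X^{2/3}$, and your appeal to averaging over $u$ is not made precise enough to escape it. The paper's choice of $Q=(\log X)^{2A+O(1)}$ is precisely designed to make the whole large-$q$ regime disappear, leaving the medium-$\xi$ regime to be handled by the hybrid mean value.
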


Before proving the above proposition, we
explain how to use this result together with Proposition \ref{Prop: Major thm2} to deduce Theorem \ref{conditionalthm}.

\begin{proof}[Proof of Theorem \ref{conditionalthm} assuming Proposition \ref{Prop: Minor thm2}]
The proof is similar to the proof of Theorem \ref{unconditionalthm}. Indeed, from \eqref{Eq: Theorem unconditional split} and \eqref{Eq: Theorem unconditional final} we have
\begin{align*}
   V_f(X; H)\ll &\:  HX(\log X)^2\bigg(\sup_{\alpha\in \mathfrak{M}^\prime}\int_{\mathfrak{M}^\prime\cap [\alpha-1/(2H), \alpha+1/(2H)]}|S_{f,\: \Lambda}(\beta)|^2\: d\beta  \\
   & + \sup_{\alpha\in \mathfrak{m}^\prime}\int_{\mathfrak{m}^\prime\cap [\alpha-1/(2H), \alpha+1/(2H)]}|S_{f,\: \Lambda}(\beta)|^2\: d\beta\bigg).
   \end{align*}
Next, applying Propositions \ref{Prop: Major thm2} and \ref{Prop: Minor thm2}, we see that the right-hand side of the above expression is $\ll HX^2/(\log X)^A$. This completes the proof.
\end{proof}

The rest of this section is devoted to establishing Proposition \ref{Prop: Minor thm2}. To prove this, we use ideas from the work of Matom\"aki, Radziwi\l{}\l{}, and Tao \cite{MRT2019i}.

\begin{proof}[Proof of Proposition \ref{Prop: Minor thm2}]
Recall that $\mathfrak{m}^\prime$ is given by \eqref{Def: minor arcs thm2}. If $\alpha\in \mathfrak{m}^\prime$, then for all $1\leq a\leq q\leq Q$ with $(a, q)=1$, we have 
\begin{align*}
    \dfrac{(\log X)^{{3A}}}{H}<\bigg|\alpha-\dfrac{a}{q}\bigg|\leq \dfrac{1}{qQ}.
\end{align*}
Write $\xi=\alpha-a/q$, so that $(\log X)^{{3A}}/H<|\xi|\leq 1/(qQ)$. Next, we have
\begin{align*}
    \int_{\mathfrak{m}^\prime\cap [\alpha-1/(2H), \alpha + 1/(2H)]}|S_{f, \: \Lambda}(\beta)|^2\: d\beta
    &\leq \int_{\xi-1/(2H)}^{\xi + 1/(2H)}|S_{f,\:  \Lambda}(a/q+\theta)|^2\: d\theta\\
    &\ll \int_{\xi-1/(2H)}^{\xi + 1/(2H)}\bigg|\sum_{X<n\leq 2X}\Lambda_f(n)e(na/q+n\theta)\bigg|^2\: d\theta + \dfrac{(\log X)^8}{H},
\end{align*}
where we have used the fact that $\Lambda_f$ is supported on prime powers in the last line. For the ease of notation, we write
\begin{align*}
    \mathcal{L}(1/2+iu, f, \chi, q_0; X):=\sum_{X<q_0\ell \leq 2X}\dfrac{\Lambda_f(q_0\ell)\chi(\ell)}{\ell^{1/2+iu}}.
\end{align*}
Our goal is to replace the exponential sums by Dirichlet polynomials. In order to do so, we apply \cite[Corollary 5.3]{MRT2019i} so that 
\begin{align*}
   \int_{\xi-1/(2H)}^{\xi + 1/(2H)} &\bigg|\sum_{X<n\leq 2X}\Lambda_f(n)e(na/q+n\theta)\bigg|^2\: d\theta\\ \ll &\:  \dfrac{d_2(q)^4}{|\xi|^2H^2q}\sup_{q=q_0q_1}\int_{\xi X/\sqrt{Q}}^{\xi X\sqrt{Q}}\bigg(\sum_{\chi\Mod {q_1}}\int_{t-2|\xi|H}^{t+2|\xi|H}\big|\mathcal{L}(1/2+iu, f, \chi, q_0; X)\big|\: du\bigg)^2\: dt\\
     & + \: \dfrac{1}{H^2}\bigg(\dfrac{1}{\sqrt{Q}} + \dfrac{1}{|\xi|H}\bigg)^2\int_{X}^{2X}\bigg(\sum_{x<n\leq x+2H}|\Lambda_f(n)|\bigg)^2\: dx.
\end{align*}
First, we handle the second term in the above expression. Using the fact that $|\Lambda_f(n)|\ll \log n$ together with $|\xi|H>(\log X)^{{3A}}$ and $Q=(\log X)^{2A+O(1)}$, we have
\begin{align*}
    \dfrac{1}{H^2}\bigg(\dfrac{1}{\sqrt{Q}} + \dfrac{1}{|\xi|H}\bigg)^2\int_{X}^{2X}\bigg(\sum_{x<n\leq x+2H}|\Lambda_f(n)|\bigg)^2\: dx
    &\ll \dfrac{1}{H^2} \cdot \dfrac{1}{(\log X)^{2A}}\cdot H^2X(\log X)^2\\
    &\ll \dfrac{X}{(\log X)^{{A+2}}},
\end{align*}
which is admissible. Also note that $d_2(q)^4\ll q^\varepsilon\ll (\log X)^{\varepsilon}$. Therefore, our task reduces to showing that if $(\log X)^{{3A}}/H<|\xi|\leq 1/(qQ)$, then
\begin{align*}
    \sup_{q=q_0q_1}\int_{\xi X/\sqrt{Q}}^{\xi X\sqrt{Q}}\bigg(\sum_{\chi\Mod {q_1}}\int_{t-2|\xi|H}^{t+2|\xi|H}\big|\mathcal{L}(1/2+iu, f, \chi, q_0; X)\big|\: du\bigg)^2\: dt \ll_{A}  \dfrac{q\xi^2H^2X}{(\log X)^{A+O(1)}}.
\end{align*}

By Lemma \ref{Lemma: Heath Brown}, we express $\Lambda_f(q_0\ell)1_{(X/q_0, 2X/q_0]}(\ell)$ as a linear combination of $O_{k, \: \varepsilon}((\log X)^{100})$ components $\widetilde{\Lambda_f}$, each of which is of one of the following types:
 \begin{itemize}
     \item Type $d_1, d_2$ sum: A function of the form
$$
\widetilde{\Lambda_f}=\left(\mathfrak{f} * \mathfrak{g}_1 *\mathfrak{g}_2 \right) 1_{\left(X / q_0,\:  2X / q_0\right]}
$$
for some arithmetic functions $\mathfrak{f}, \mathfrak{g}_1, \mathfrak{g}_2\colon \mathbb{N} \rightarrow \mathbb{C}$ where $\mathfrak{f}$ is supported on $[N, 2N]$ and $|\mathfrak{f}|\leq d_{10}$, and $\mathfrak{g}_1, \mathfrak{g}_2$ is either of the form $1_{\left(M_j, 2M_j\right]}\cdot \lambda_{f} \cdot \chi$ or $  1_{\left(M_j, 2 M_j\right]}\cdot \lambda_{f} \cdot \chi \cdot \log $ satisfying the bounds
$$
\begin{gathered}
1 \ll N <_{k, \varepsilon} X^{\varepsilon^2}, \quad HX^{-\varepsilon^2}\ll M_1\ll M_2\ll X/q_0, \quad \text{and} \quad 
NM_1M_2\asymp_{k, \varepsilon_0} X/q_0.
\end{gathered}
$$
\item Type II sums: A function of the form
$$
\widetilde{\Lambda_f}=(\mathfrak{f} * \mathfrak{g}) 1_{\left(X / q_0,\: 2 X / q_0\right]}
$$
for some arithmetic functions $\mathfrak{f}, \mathfrak{g}\colon \mathbb{N} \rightarrow \mathbb{C}$ such that $|\mathfrak{f}|, |\mathfrak{g}|\leq d_{10}$ and supported on $[N, 2 N]$ and $[M, 2 M]$ respectively, satisfying the bounds
$$
X^{\varepsilon^2} \ll N \ll HX^{-\varepsilon^2}\quad \text{and} \quad NM\asymp_{k, \varepsilon_0}X/q_0.
$$
\item Small sum: A function $\widetilde{\Lambda_f}$ supported on $(X/q_0, 2X/q_0]$ satisfying
\begin{align}\label{Eq: small sum}
    \|\widetilde{\Lambda_f}\|_{\ell^2}^2\ll_{k, \varepsilon} X^{1-\varepsilon^2/8}.
\end{align}
 \end{itemize}

So, it suffices to show that for $\widetilde{\Lambda_f}$ being one of the above forms and for any $(\log X)^{{3A}}/H<|\xi|\leq 1/(qQ)$, one has
\begin{equation}\label{eq:hybrid-moment}
\int_{\xi X/\sqrt{Q}}^{\xi X\sqrt{Q}}
\bigg(
  \sum_{\chi\Mod {q_1}}
    \int_{t-\xi H}^{\,t+\xi H}
      \bigg|\sum_{n}\dfrac{\widetilde{\Lambda_f}(n)\chi(n)}{n^{1/2+iu}}\bigg|du
\bigg)^{\!2}
\,dt \ll_{k, \: \varepsilon, \: A}\dfrac{q_1\xi^2H^2X}{(\log X)^{A+O
(1)}}.
\end{equation}

\medskip

\noindent{\bf Small sum:} Let us begin with the small sum case. By the Cauchy-Schwarz inequality, we have
\begin{align*}
    \bigg(
  \sum_{\chi\Mod {q_1}}
    \int_{t-\xi H}^{\,t+\xi H}
      \bigg|\sum_{n}\dfrac{\widetilde{\Lambda_f}(n)\chi(n)}{n^{1/2+iu}}\bigg|du\bigg)^2\ll q_1|\xi|H \sum_{\chi\Mod {q_1}}
    \int_{t-\xi H}^{\,t+\xi H}
      \bigg|\sum_{n}\dfrac{\widetilde{\Lambda_f}(n)\chi(n)}{n^{1/2+iu}}\bigg|^2\: du.
\end{align*}
This implies that after changing the order of integrals, the left-hand side of \eqref{eq:hybrid-moment} is
\begin{align*}
    &\ll q_1\xi^2H^2\sum_{\chi\Mod {q_1}}\int_{|t|\ll \xi X\sqrt{Q}}\bigg|\sum_{n}\dfrac{\widetilde{\Lambda_f}(n)\chi(n)}{n^{1/2+it}}\bigg|^2\: dt\\
    &\ll q_1\xi^2H^2 \cdot \dfrac{q_1\xi XQ^{1/2} + X/q_0}{X/q_0} \|\widetilde{\Lambda_f}\|_{\ell^2}^2(\log X)^3,
\end{align*}
where we have used Lemma \ref{Lemma: mean value} in the last line. By \eqref{Eq: small sum} together with $q_0, q_1\leq Q\ll (\log X)^{2A+O(1)}$, this establishes the claim \eqref{eq:hybrid-moment}.

\medskip

\noindent{\bf Type II sums:} We now handle the Type II sums. Without loss of generality, we may assume that \(N_{j}\ll M_{j} \).  Since $\widetilde{\Lambda_f}=\mathfrak{f}*\mathfrak{g}$, we have
\begin{align*}
    \sum_{n}\dfrac{\widetilde{\Lambda_f}(n)\chi(n)}{n^{1/2+iu}}=\bigg(\sum_{n}\dfrac{\mathfrak{f}(n)\chi(n)}{n^{1/2+iu}}\bigg)\bigg(\sum_{n}\dfrac{\mathfrak{g}(n)\chi(n)}{n^{1/2+iu}}\bigg).
\end{align*}
So, by the Cauchy-Schwarz inequality, we infer that
\begin{align*}
    \bigg(
  \sum_{\chi\Mod {q_1}}&
    \int_{t-\xi H}^{\,t+\xi H}
      \bigg|\sum_{n}\dfrac{\widetilde{\Lambda_f}(n)\chi(n)}{n^{1/2+iu}}\bigg|\: du
\bigg)^{\!2}\\
&\leq \bigg(
  \sum_{\chi\Mod {q_1}}
    \int_{t-\xi H}^{\,t+\xi H}
      \bigg|\sum_{n}\dfrac{\mathfrak{f}(n)\chi(n)}{n^{1/2+iu}}\bigg|^2\: du\bigg) \bigg(
  \sum_{\chi\Mod {q_1}}
    \int_{t-\xi H}^{\,t+\xi H}
      \bigg|\sum_{n}\dfrac{\mathfrak{g}(n)\chi(n)}{n^{1/2+iu}}\bigg|^2 du\bigg).
\end{align*}
Therefore, applying Lemma \ref{Lemma: mean value} and Fubini's theorem, we deduce that the left-hand side of \eqref{eq:hybrid-moment} is
\begin{align*}
    & \ll_{k,\varepsilon} q_1\bigg( Q^{1/2}q_1|\xi| +  \frac{Q^{1/2}N}{H}+\frac{1}{N}+\frac1{q_1|\xi|H}
\bigg)|\xi|^2H^2X(\log X)^{O_{\varepsilon}(1)}\\
&\ll \dfrac{q_1|\xi|^2H^2X}{(\log X)^{A+O(1)}},
\end{align*}
using the facts that \(X^{\varepsilon^2}\ll N \ll  HX^{-\varepsilon^2}\) and \(|\xi|\ll 1/(q_0q_1Q)\). This establishes the bounds for Type II sums.

\medskip

\noindent{\bf Type $d_1, d_2$ sums:} Finally, we handle the Type $d_1, d_2$ sums. Since $\widetilde{\Lambda_f
}=\mathfrak{f}*\mathfrak{g}_1*\mathfrak{g}_2$ in this case, we have
\begin{align*}
    \sum_{n}\dfrac{\widetilde{\Lambda_f}(n)\chi(n)}{n^{1/2+iu}}=\bigg(\sum_{n}\dfrac{\mathfrak{f}(n)\chi(n)}{n^{1/2+iu}}\bigg)\bigg(\sum_{n}\dfrac{\mathfrak{g}_1(n)\chi(n)}{n^{1/2+iu}}\bigg)\bigg(\sum_{n}\dfrac{\mathfrak{g}_2(n)\chi(n)}{n^{1/2+iu}}\bigg).
\end{align*}
By the Cauchy-Schwarz inequality, we have
\begin{align*}
     \bigg(
  \sum_{\chi\Mod {q_1}} &
    \int_{t-\xi H}^{\,t+\xi H}
      \bigg|\sum_{n}\dfrac{\widetilde{\Lambda_f}(n)\chi(n)}{n^{1/2+iu}}\bigg|\: du
\bigg)^{2}\\
\ll &\: \bigg( \sum_{\chi\Mod {q_1}}
    \int_{t-\xi H}^{\,t+\xi H}
      \bigg|\sum_{n}\dfrac{\mathfrak{f}(n)\chi(n)}{n^{1/2+iu}}\bigg|^2\: du\bigg)\\
& \times \: \bigg( \sum_{\chi\Mod {q_1}}
    \int_{t-\xi H}^{\,t+\xi H}
      \bigg|\sum_{n}\dfrac{\mathfrak{g}_1(n)\chi(n)}{n^{1/2+iu}}\bigg|^2\: du\bigg)\bigg( \sum_{\chi\Mod {q_1}}
    \int_{t-\xi H}^{\,t+\xi H}
      \bigg|\sum_{n}\dfrac{\mathfrak{g}_2(n)\chi(n)}{n^{1/2+iu}}\bigg|^2\: du\bigg).
\end{align*}
By Lemma \ref{Lemma: mean value}, we have
\begin{align*}
    \sum_{\chi\Mod {q_1}}
    \int_{t-\xi H}^{\,t+\xi H}
      \bigg|\sum_{n}\dfrac{\mathfrak{f}(n)\chi(n)}{n^{1/2+iu}}\bigg|^2\: du\ll (q_1|\xi|H + N)(\log X)^{O(1)}.
\end{align*}
So, we may now apply Fubini's theorem to infer that the left-hand side of \eqref{eq:hybrid-moment} is
\begin{align*}
    \ll_{\varepsilon} &\:  (q_1|\xi|H + N)|\xi|H(\log X)^{O(1)}\\
    & \times \: \sum_{\chi\Mod {q_1}}\int_{\xi X/\sqrt{Q}}^{\xi X\sqrt{Q}}\bigg|\sum_{n}\dfrac{\mathfrak{g}_1(n)\chi(n)}{n^{1/2+it}}\bigg|^2\bigg|\sum_{n}\dfrac{\mathfrak{g}_2(n)\chi(n)}{n^{1/2+it}}\bigg|^2\: dt.
\end{align*}
Observe that
\begin{align*}
\sum_{\chi\Mod {q_1}} & \int_{\xi X/\sqrt{Q}}^{\xi X\sqrt{Q}}\bigg|\sum_{n}\dfrac{\mathfrak{g}_1(n)\chi(n)}{n^{1/2+it}}\bigg|^2\bigg|\sum_{n}\dfrac{\mathfrak{g}_2(n)\chi(n)}{n^{1/2+it}}\bigg|^2\: dt\\
&\ll (\log X)\max_{\xi X/\sqrt{Q}\leq T\leq \xi X\sqrt{Q}}\sum_{\chi\Mod {q_1}}\int_{T/2}^T\bigg|\sum_{n}\dfrac{\mathfrak{g}_1(n)\chi(n)}{n^{1/2+it}}\bigg|^2\bigg|\sum_{n}\dfrac{\mathfrak{g}_2(n)\chi(n)}{n^{1/2+it}}\bigg|^2\: dt.
\end{align*}
Now we apply the Cauchy-Schwarz inequality and Hypothesis \ref{Hypothesis}, so that the above quantity is
\begin{align*}
    \ll \max_{\xi X/\sqrt{Q}\leq T\leq \xi X\sqrt{Q}} q_1T(\log X)^{O(1)} \ll q_1|\xi|XQ^{1/2}(\log X)^{O(1)}.
\end{align*}
Therefore, the total contribution to the left-hand side of $\eqref{eq:hybrid-moment}$ is
\begin{align*}
    \ll q_1(q_1|\xi|H + N)\xi^2 HX Q^{1/2}(\log X)^{O(1)} \ll_{A}  \dfrac{q_1\xi^2H^2X}{(\log X)^{A+O(1)}},
\end{align*}
using that $N\ll X^{\varepsilon^2}$, $H\geq X^{{1/3+\varepsilon}}$, and $|\xi|\leq 1/(q_0q_1Q)$. This establishes \eqref{eq:hybrid-moment} in the case of Type $d_1, d_2$ sums.
\end{proof}

\section{Proof of Theorem \ref{averagetheorem}}\label{sec: Proof of theorem averaged} 
In order to prove Theorem \ref{averagetheorem}, we shall use the Petersson trace formula. Recall the \emph{harmonic weights} of $f\in \mathcal{H}_k$ given by
\begin{align*}
    \omega_f=\dfrac{\Gamma(k-1)}{(4\pi)^{k-1}\langle\, f , f \,\rangle
},
\end{align*}
where $\langle\, f , f \,\rangle$ denotes the Petersson inner product.

\begin{lem}[Petersson trace formula]\label{Lemma: Petersson}
Let $k\geq 1$ be an even integer. Then, for any two integers $m, n\geq 1$, we have
\[
\sum_{f \in \mathcal{H}_{k}} \omega_f\lambda_f(n)\lambda_f(m)
= \mathbf{1}_{m=n} + \mathcal{E}(k,m,n),
\]
where
\[
\mathcal{E}(k,m,n)
\ll
\dfrac{(mn)^{1/4}(\log(3mn))^2 d_2((m,n))}{k^{1/2}}.
\]
Here, the implied constant is absolute.
\end{lem}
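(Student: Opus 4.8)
The plan is to deduce Lemma~\ref{Lemma: Petersson} from the classical Petersson trace formula by estimating its error term with Weil's bound for Kloosterman sums and elementary bounds for Bessel functions. First I would recall the exact formula in the present Hecke-normalized setting (see, for example, \cite[Chapter 14]{IKbook}): for even $k\geq 2$ and integers $m,n\geq 1$,
\[
\sum_{f\in\mathcal{H}_k}\omega_f\,\lambda_f(m)\lambda_f(n)=\mathbf{1}_{m=n}+2\pi i^{-k}\sum_{c=1}^{\infty}\frac{S(m,n;c)}{c}\,J_{k-1}\!\left(\frac{4\pi\sqrt{mn}}{c}\right),
\]
where $S(m,n;c)$ is the classical Kloosterman sum. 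Thus $\mathcal{E}(k,m,n)$ is exactly the Kloosterman--Bessel series on the right; since $k$ is even the prefactor $2\pi i^{-k}$ is real of modulus $2\pi$, and it remains to bound this series in absolute value. (For the finitely many even $k$ with $S_k(\mathrm{SL}(2,\mathbb{Z}))=\{0\}$ the left-hand side is an empty sum and the identity merely degenerates, but the estimate below is uniform in $k$ and covers those cases as well.)

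The inputs are Weil's bound $|S(m,n;c)|\leq d_2(c)\,(m,n,c)^{1/2}c^{1/2}$, and two bounds for the Bessel function: $|J_{k-1}(x)|\leq 1$ (valid since $k-1$ is a nonnegative integer, via $J_{k-1}(x)=\frac1\pi\int_0^{\pi}\cos((k-1)\theta-x\sin\theta)\,d\theta$) and $|J_{k-1}(x)|\leq (x/2)^{k-1}/\Gamma(k)\leq (ex/(2(k-1)))^{k-1}$ (from the power series together with $\Gamma(k)=(k-1)!\geq (k-1)^{k-1}e^{-(k-1)}$). Set $c_*:=2\pi e\sqrt{mn}/(k-1)$, so that for $c\geq c_*$ the second bound gives $|J_{k-1}(4\pi\sqrt{mn}/c)|\leq (c_*/c)^{k-1}$, whereas for $c<c_*$ I use $|J_{k-1}|\leq 1$. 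I would then split the $c$-sum into the range $c<c_*$ and the dyadic ranges $c\in[2^{j}c_*,\,2^{j+1}c_*)$ for $j\geq 0$, on which $(c_*/c)^{k-1}\leq 2^{-j(k-1)}$. On each such range, of length $\asymp C$ with $C\ll 2^{j}\sqrt{mn}$, the remaining sum is handled by the elementary estimate
\[
\sum_{c\leq C}\frac{d_2(c)\,(m,n,c)^{1/2}}{c^{1/2}}\ \ll\ C^{1/2}\log(2C)\sum_{d\mid(m,n)}\frac{d_2(d)}{\sqrt{d}}\ \ll\ C^{1/2}\,(\log(3mn))\,d_2((m,n)),
\]
proved by writing $(m,n,c)^{1/2}=\sum_{d\mid(m,n,c)}h(d)$ where $h\geq 0$ is the multiplicative function with $\sum_{d\mid n}h(d)=\sqrt{n}$ (so $h(d)\leq\sqrt d$), substituting $c=dc'$, using $d_2(dc')\leq d_2(d)d_2(c')$ and $\sum_{c'\leq W}d_2(c')/\sqrt{c'}\ll W^{1/2}\log(2W)$, and finally $\sum_{d\mid N}d_2(d)/\sqrt d\ll d_2(N)$ with an absolute constant (only $d$ supported on $\{2,3,5\}$ contribute a local factor exceeding $2$).

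Summing the geometric series over $j$ — which converges since $k\geq 2$ makes $k-3/2\geq 1/2>0$, this also absorbing the harmless linear-in-$j$ factor from $\log(2^{j}c_*)$ — and using $c_*^{1/2}\ll (mn)^{1/4}(k-1)^{-1/2}$, one obtains $\mathcal{E}(k,m,n)\ll (mn)^{1/4}(\log(3mn))\,d_2((m,n))\,k^{-1/2}$, which in particular yields the bound asserted in Lemma~\ref{Lemma: Petersson}; when $c_*<1$ the range $c<c_*$ is empty and the power bound alone already gives $\mathcal{E}(k,m,n)\ll k^{-1/2}\ll (mn)^{1/4}k^{-1/2}$. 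The computation is otherwise routine, and the one point that genuinely needs care is the tail $c>c_*\asymp\sqrt{mn}/k$: bounding $\sum_{c>c_*}$ too crudely would leak a spurious factor $(mn)^{1/2}$, so one must combine the per-block decay $(c_*/c)^{k-1}\leq 2^{-j(k-1)}$ with the displayed divisor-sum bound \emph{inside} each dyadic block, and only then sum over $j$. I do not anticipate any further difficulty. (Alternatively, the Hecke relation $\lambda_f(m)\lambda_f(n)=\sum_{d\mid(m,n)}\lambda_f(mn/d^2)$ reduces the statement to the case $n=1$, at the cost of the harmless factor $\sum_{d\mid(m,n)}d^{-1/2}\leq d_2((m,n))$, if one prefers to avoid carrying the gcd $(m,n,c)$ through the argument.)
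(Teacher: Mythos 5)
Your argument is correct and amounts to a self-contained derivation of the bound that the paper simply obtains by citing \cite[Corollary 14.24]{IKbook} with $q=1$; the route you take (exact Petersson formula plus Weil's bound plus the trivial and power-series bounds for $J_{k-1}$, dyadic decomposition at $c_* \asymp \sqrt{mn}/k$) is the standard way that quantitative error term is proved, so at the level of mathematical content this is essentially the same approach with the citation unpacked. Two small observations. First, your computation actually gives a single power of $\log(3mn)$, which is stronger than the $(\log(3mn))^2$ stated in the lemma, so your conclusion implies what is asserted. Second, in the step $\sum_{d\mid N}d_2(d)/\sqrt d\ll d_2(N)$, the parenthetical claim that only primes in $\{2,3,5\}$ yield a local factor exceeding $2$ is slightly inaccurate — the supremum of $\bigl(\sum_{j\le a}(j+1)p^{-j/2}\bigr)/(a+1)$ over $a\ge 1$ also exceeds $2$ at $p=7$ and exceeds $1$ up to $p=11$ — but this is immaterial, since the local ratios are $\le 1$ for all $p\ge 13$ and the finite product over the remaining primes is an absolute constant, which is all the argument needs.
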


\begin{proof}
Apply \cite[Corollary 14.24]{IKbook} with $q=1$.
\end{proof}

\begin{lem}\label{alternativePerelli}
   Let $\varepsilon>0$ and let $x$ be sufficiently large. Let $k\geq 1$ be an even integer such that $k \gg x^{1+5\varepsilon}$. Then, for $x^{\varepsilon} \ll y \ll x^{1-\varepsilon}$, we have 
   \begin{align*}
       \sum_{f\in \mathcal{H}_k}\omega_f\bigg|\sum_{x<n\leq x+y}\lambda_f(n)\Lambda(n)e(n\alpha)\bigg|^2\ll_{\varepsilon} y^2x^{-\varepsilon/2}.
   \end{align*}
\end{lem}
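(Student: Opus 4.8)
The plan is to expand the square, swap the order of summation so that the Petersson trace formula (Lemma \ref{Lemma: Petersson}) can be applied to the inner sum over $f$, and then bound the resulting diagonal and off-diagonal contributions separately. Writing $N:=\{n : x < n \le x+y\}$, we have
\begin{align*}
\sum_{f\in \mathcal{H}_k}\omega_f\bigg|\sum_{n\in N}\lambda_f(n)\Lambda(n)e(n\alpha)\bigg|^2
=\sum_{m,n\in N}\Lambda(m)\Lambda(n)\,e((n-m)\alpha)\sum_{f\in \mathcal{H}_k}\omega_f\lambda_f(n)\lambda_f(m).
\end{align*}
Applying Lemma \ref{Lemma: Petersson}, the inner sum equals $\mathbf{1}_{m=n}+\mathcal{E}(k,m,n)$. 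The diagonal term $m=n$ contributes $\sum_{n\in N}\Lambda(n)^2\ll y(\log x)^2$, which is $\ll y^2 x^{-\varepsilon/2}$ since $y\gg x^{\varepsilon}$ (so $y(\log x)^2 \ll y\cdot y\,x^{-\varepsilon}(\log x)^2 \ll y^2 x^{-\varepsilon/2}$ for $x$ large). So the diagonal is acceptable.

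For the off-diagonal, I would use the bound $\mathcal{E}(k,m,n)\ll (mn)^{1/4}(\log(3mn))^2 d_2((m,n))/k^{1/2}$ together with $|\Lambda(m)|,|\Lambda(n)|\ll \log x$ and $|e((n-m)\alpha)|=1$, so that the off-diagonal contribution is
\begin{align*}
\ll \frac{(\log x)^2}{k^{1/2}}\sum_{m,n\in N}(mn)^{1/4}(\log(3mn))^2 d_2((m,n))
\ll \frac{(\log x)^{4}}{k^{1/2}}\,x^{1/2}\sum_{m,n\in N} d_2((m,n)),
\end{align*}
using $mn \asymp x^2$ on $N$. The divisor sum $\sum_{m,n\in N}d_2((m,n))=\sum_{d} d_2(d)\,\#\{(m,n)\in N^2 : d\mid(m,n)\}$ is $\ll \sum_{d\le 2x}d_2(d)(y/d+1)^2 \ll y^2(\log x)^{O(1)}+ x(\log x)^{O(1)}\ll y^2(\log x)^{O(1)}$ (the last step again using $y\gg x^{\varepsilon}$, hence $x\ll y^2 x^{-\varepsilon}$). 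Collecting terms, the off-diagonal is
\begin{align*}
\ll \frac{x^{1/2}y^2(\log x)^{O(1)}}{k^{1/2}}.
\end{align*}
Since $k\gg x^{1+5\varepsilon}$, we have $k^{1/2}\gg x^{1/2+5\varepsilon/2}$, so this is $\ll y^2 x^{-5\varepsilon/2}(\log x)^{O(1)}\ll y^2 x^{-\varepsilon/2}$ for $x$ sufficiently large, as desired.

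The main obstacle is not conceptual but bookkeeping: one must make sure the $\varepsilon$-budget in the hypothesis $k\gg x^{1+5\varepsilon}$ really does absorb the $x^{1/2}$ loss from $(mn)^{1/4}$ against $k^{-1/2}$ \emph{after} also paying for the divisor-sum factor $y^2$ and the powers of $\log x$, and that the lower bound $y\gg x^{\varepsilon}$ is genuinely used to control the "$+1$" terms in the divisor sum and the diagonal term. A small point to be careful about is that $\Lambda$ is supported on prime powers, so one could replace $\Lambda$ by $\lambda_f\Lambda_f$-type quantities if needed, but here treating $\Lambda(n)\ll\log n$ trivially already suffices and no finer input is required. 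One should also note the implied constant is allowed to depend on $\varepsilon$ but not on $k$ or $\alpha$, which is automatic from the absolute constant in Lemma \ref{Lemma: Petersson}.
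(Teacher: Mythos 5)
Your proposal follows the same route as the paper: expand the square, apply the Petersson trace formula (Lemma \ref{Lemma: Petersson}), and bound the diagonal and off-diagonal contributions separately. The diagonal estimate and the main structure of the off-diagonal estimate are correct, and the $\varepsilon$-budget check with $k\gg x^{1+5\varepsilon}$ is sound.

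However, there is a small but genuine error in the way you bound the divisor sum, in two places. First, the identity $\sum_{m,n\in N}d_2((m,n))=\sum_{d} d_2(d)\,\#\{(m,n)\in N^2 : d\mid(m,n)\}$ is incorrect; unfolding $d_2((m,n))=\sum_{d\mid(m,n)}1$ gives $\sum_d \#\{(m,n)\in N^2:d\mid(m,n)\}$ without the $d_2(d)$ weight (what you wrote equals $\sum_{m,n}d_3((m,n))$, which is an overcount — not fatal, but worth noting). Second, and more seriously, after arriving at the bound $y^2(\log x)^{O(1)}+x(\log x)^{O(1)}$ you absorb the $x(\log x)^{O(1)}$ term by asserting $x\ll y^2 x^{-\varepsilon}$, but this requires $y\gg x^{(1+\varepsilon)/2}$, which is not implied by the hypothesis $y\gg x^{\varepsilon}$ (for small $\varepsilon$ and $y$ close to $x^\varepsilon$, the inequality $x\ll y^2 x^{-\varepsilon}$ fails). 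The $x(\log x)^{O(1)}$ term is in fact spurious: for $d>y$ one should use $\#\{m\in N:d\mid m\}\le 1$ rather than the crude $(y/d+1)^2\le 4$, and then $\sum_{d>y}\#\{(m,n)\in N^2:d\mid(m,n)\}\le\sum_{m\in N}d_2(m)\ll y\log x$ by Shiu's bound, giving $\sum_{m,n\in N}d_2((m,n))\ll y^2$ directly. Alternatively, and this is what the paper does, one can skip this computation entirely by absorbing $d_2((m,n))(\log(3mn))^2\Lambda(m)\Lambda(n)$ into $(mn)^{\varepsilon}$ and simply bounding the number of pairs by $y^2$; this avoids the side calculation that introduced the gap.
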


\begin{proof}
    By expanding the square, we see that 
    \begin{align*}
      \sum_{f\in \mathcal{H}_k}\omega_f\bigg|\sum_{x<n\leq x+y}\lambda_f(n)\Lambda(n)e(n\alpha)\bigg|^2  =\sum_{x<n, m\leq x+y}\Lambda(n)\Lambda(m)e((n-m)\alpha)\sum_{f\in \mathcal{H}_k}\omega_f\lambda_f(n)\lambda_f(m).
    \end{align*}
Next, by Lemma \ref{Lemma: Petersson}, we infer that the right-hand side of the above expression is
    \begin{align*}
       & =\sum_{x<n\leq x+y}\Lambda(n)^2 + O\bigg(\dfrac{1}{\sqrt{k}}\sum_{x<n, m\leq x+y}(mn)^{1/4+\varepsilon}\bigg)\\
        &\ll y(\log x)^2 + \dfrac{y^2x^{1/2+2\varepsilon}}{\sqrt{k}}\ll y^2x^{-\varepsilon/2},
    \end{align*}
    by our assumption that $k\gg x^{1+5\varepsilon}$. This completes the proof.
\end{proof}

We are now ready to complete the proof of Theorem \ref{averagetheorem}. 

\begin{proof}[Proof of Theorem \ref{averagetheorem}]
We will follow the argument used in the proof of Theorem~\ref{unconditionalthm}. Indeed, by variants of  \eqref{Eq: Theorem unconditional split} and \eqref{Eq: Theorem unconditional final}, we have
\begin{align*}
    \sum_{f\in \mathcal{H}_k}\omega_fV_f(X; H)\ll HX(\log X)^2\max_{0\leq \alpha\leq 1}\sum_{f\in \mathcal{H}_k}\omega_f \int_{\alpha-1/(2H)}^{\alpha+1/(2H)}|S_{f, \: \Lambda}(\beta)|^2\: d\beta.
\end{align*}
Therefore, it suffices to show that
\begin{align*}
   \max_{0\leq \alpha\leq 1} \sum_{f\in \mathcal{H}_k}\omega_f\int_{\alpha-1/(2H)}^{\alpha+1/(2H)}|S_{f, \: \Lambda}(\beta)|^2\:d\beta \ll_{\varepsilon} X^{1-\varepsilon}.
\end{align*}
By Gallagher's lemma \cite[Lemma 1.9]{M1971}, the left-hand side of the above inequality  is
\begin{align*}
    \ll \dfrac{1}{H^2}\max_{0\leq \alpha\leq 1}\int_{X}^{2X}\sum_{f\in \mathcal{H}_k}\omega_f\bigg|\sum_{x<n\leq x+H}\lambda_f(n)\Lambda(n)e(n\alpha)\bigg|^2\: dx.
\end{align*}
Next, we apply Lemma \ref{alternativePerelli}, so that the above quantity is
\begin{align*}
    \ll_{\varepsilon} \dfrac{1}{H^2}\int_{X}^{2X}H^2x^{-\varepsilon}\: dx\ll_{\varepsilon} X^{1-\varepsilon},
\end{align*}
as desired.
\end{proof}

\section{Concluding remark}\label{sec: concluding}
 One may try to give an alternative proof of our results by adapting the arguments of Perelli and Pintz \cite{PP1992}, who treat the $2k$-twin problem. Their analysis of the minor arcs relies on a zero-free region together with zero-density estimates of the Dirichlet $L$-functions. To transfer their method to our setting, one would appeal to the zero-density bounds in \cite{K1998}, since the minor arc estimates of Perelli-Pintz require strong zero-density results for Dirichlet $L$-functions. Indeed, assume that, for $1/2 \le \sigma \le 1$,
\[
\sum_{\chi \,(\mathrm{mod}\, q)} N_f(\sigma,T,\chi)
  \ll_{k} (qT)^{V(1-\sigma)} \, (\log qT)^{O(1)}
\]
and
\[
\sum_{q \le Q} \ \sum_{\chi \,(\mathrm{mod}\, q)} N_f(\sigma,T,\chi)
  \ll_{k} (Q^{2}T)^{V(1-\sigma)} \, (\log QT)^{O(1)},
\]
for some $2 \le V \le 3$. Then, the contribution of zeros with real part $\sigma > 1 - 1/V$ will impose the restriction $H \gg Q \gg X^{1 - 4/(3V)}$, while the contribution of zeros with $1/2\leq  \sigma \le 1 -1/V$ forces $H \gg X^{(3V-2)/8}$.
Since we also require $Q \ll X^{1/2}$ to apply the argument in \cite{PP1992}, it follows that when $V>8/3$, no nontrivial bound is obtainable. Under the stronger hypothesis that $V=2$, one recovers a result in the range $H \gg X^{1/2}$. If, in addition, one assumes the slightly stronger zero-density bound for $1/2\leq \sigma \le 4/5$ as in \cite{K1998}, then one obtains the result in the range $H \gg X^{1/2}$. However, we are able to get $H\gg X^{1/3+\varepsilon}$ by assuming hypotheses that are weaker compared to strong zero-density estimates.

\section*{Acknowledgements}
The authors would like to thank Youness Lamzouri for his comments on an earlier version of the paper.

\bibliographystyle{plain}

\end{document}